\numberwithin{equation}{section}
\newtheorem{theorem}{Theorem}[section]
\newtheorem{lemma}[theorem]{Lemma}
\newtheorem{corollary}[theorem]{Corollary}
\newtheorem{proposition}[theorem]{Proposition}
\theoremstyle{definition}
\newtheorem{definition}[theorem]{Definition}
\theoremstyle{remark}
\newtheorem{remark}[theorem]{Remark}
\newtheorem*{remark*}{Remark}
\newcommand{\isom}{\cong}
\newcommand{\R}{\mathbb{R}}
\newcommand{\N}{\mathbb{N}}
\newcommand{\eR}{\overline{\mathbb{R}}}
\newcommand{\norm}[1]{\lVert#1\rVert}
\newcommand{\abs}[1]{\lvert#1\rvert}
\newcommand{\B}{\mathcal{B}}
\newcommand{\xto}{\xrightarrow}
\newcommand{\isomto}{\xrightarrow{\isom}}
\newcommand{\F}{\mathcal{F}}
\newcommand{\st}{\ | \ }
\renewcommand{\L}{\mathcal{L}}
\newcommand{\eps}{\varepsilon}
\newcommand{\incl}{\hookrightarrow}
\renewcommand{\S}{\mathcal{S}}
\renewcommand{\P}{\mathcal{P}}
\newcommand{\A}{\mathcal{A}}
\DeclareMathOperator{\im}{im}
\DeclareMathOperator{\pers}{pers}
\DeclareMathOperator{\Var}{Var}
\DeclareMathOperator{\ess}{ess}
\DeclareMathOperator{\Lip}{Lip}
\DeclareMathOperator{\Pers}{Pers}
\begin{document}

\title[Persistence landscapes]{Statistical topological data analysis using persistence landscapes}

\author{Peter Bubenik}
\address{Department of Mathematics,
       Cleveland State University,
       Cleveland, OH 44115-2214, USA}
\email{peter.bubenik@gmail.com}

\maketitle

\begin{abstract}
  We define a new topological summary for data that we call the
  persistence landscape.  Since this summary lies in a vector space,
  it is easy to
  combine with tools from statistics and machine learning, 
  in contrast to the standard topological summaries.
  Viewed as a random variable with values in a Banach space, this
  summary obeys a strong law of large numbers and a central limit
  theorem.  We show how a number of standard statistical tests can be
  used for statistical inference using this summary.  We also prove
  that this summary is stable and that it can be used to provide lower
  bounds for the bottleneck and Wasserstein distances.
\end{abstract}


\section{Introduction}
\label{sec:intro}

Topological data analysis (TDA) consists of a growing set of methods
that provide insight to the ``shape'' of data \citep[see the surveys][]{ghrist:survey,carlsson:topologyAndData}. These tools
may be of particular use in understanding global features of high
dimensional data that are not readily accessible using other
techniques. The use of TDA has been limited by the difficulty of
combining the main tool of the subject, the \emph{barcode} or
\emph{persistence diagram} with statistics and machine learning. Here
we present an alternative approach, using a new summary that we call
the \emph{persistence landscape}. The main technical advantage of this
descriptor is that it is a function and so we can use the vector
space structure of its underlying function space. In fact, this
function space is a separable Banach space and we apply the theory of
random variables with values in such spaces.  Furthermore, since the
persistence landscapes are sequences of piecewise-linear functions,
calculations with them are much faster than the corresponding
calculations with barcodes or persistence diagrams, removing a second
serious obstruction to the wider use of topological methods in data
analysis.

Notable successes of TDA include the discovery of a subgroup of breast
cancers by \citet{nlc:topologyBreastCancer}, an understanding of the
topology of the space of natural images by \citet{cidsz:mumford} and the
topology of orthodontic data by \citet{hgk:jasa}, and the detection of
genes with a periodic profile by
\citet{segmentation}. \citet{deSilvaGhrist:homologicalSensorNetworks:capitalized,deSilvaGhrist:coverageInSNvPH:capitalized}
used topology to prove coverage in sensor networks.

In the standard paradigm for TDA, one starts with data that one
encodes as a finite set of points in $\R^n$ or more generally in some
metric space. Then one applies some geometric construction to which
one applies tools from algebraic topology. The end result is a
topological summary of the data. The standard topological descriptors
are the barcode and the persistence diagram 
\citep{elz:tPaS,zomorodianCarlsson:computingPH,cseh:stability}, which give
a multiscale representation of the \emph{homology}
\citep{hatcher:book} of the geometric construction. Roughly, homology
in degree 0 describes the connectedness of the data; homology in
degree 1 detects holes or tunnels; homology in degree 2 captures
voids; and so on. Of particular interest are the homological features
that persist as the resolution changes. We will give precise
definitions and an illustrative example of this method, called
\emph{persistent homology} or \emph{topological persistence}, in
Section~\ref{sec:top}.

Now let us take a statistical view of this paradigm. We consider the
data to be sampled from some underlying abstract probability
space. Composing the constructions above, we consider our topological
summary to be a random variable with values in some summary space
$\S$. In detail, the probability space $(\Omega,\F,\P)$ consists of a
sample space $\Omega$, a $\sigma$-algebra $\F$ of events, and a
probability measure $\P$. Composing our constructions gives a function
$X: (\Omega,\F,\P) \to (\S,\A,\P_*)$, where $\S$ is the summary space,
which we assume has some metric, $\A$ is the corresponding Borel
$\sigma$-algebra, and $\P_*$ is the probability measure on $\S$
obtained by pushing forward $\P$ along $X$. We assume that $X$ is
measurable and thus $X$ is a random variable with values in $\S$.

Here is a list of what we would like to be able to do with
our topological summary.
Let $X_1,\ldots,X_n$ be a sample of independent random variables with
the same distribution as $X$.  
We would like to have a good notion of the mean $\mu$ of $X$ and the
mean $\overline{X}_n$ of the sample; know that $\overline{X}_n$
converges to $\mu$; and be able to calculate $\overline{X}_n(\omega)$,
for $\omega \in \Omega$, efficiently.
We would like to have information the difference
$\overline{X}_n-\mu$, and be able to calculate approximate confidence intervals
related to $\mu$. Given two such samples for random variables $X$ and
$Y$ with values in our summary space, we would like to be able to test
the hypothesis that $\mu_X=\mu_Y$. 
In order to answer these questions we also need an efficient algorithm
for calculating distances between elements of our summary space. 
In this article, we construct a topological summary that we call the
persistence landscape which meets these requirements. 

Our basic idea is to convert the barcode into a function in a somewhat additive manner. The are many possible variations of this construction that may result in more suitable summary statistics for certain applications. Hopefully, the theory presented here will also be helpful in those situations.

We remark that while the persistence landscape has a corresponding
barcode and persistence diagram, the mean persistence landscape does
not.  This is analogous to the situation in which an integer-valued
random variable having a Poisson distribution has a summary statistic, the rate parameter, that is not an integer.

We also remark that the reader may restrict our Banach space results results to the perhaps more familiar Hilbert space setting. However we will need this generality to prove stability of the persistence landscape for, say, functions on the $n$-dimensional sphere where $n>2$.

There has been progress towards combining the persistence diagram and
statistics
\citep{mmh:probability,tmmh:frechet-means,munch:probabilistic-f,Chazal:2013a,Balakrishnan:2013}.
\citet{Blumberg:2012} give a related statistical approach to TDA. 
\citet{giseon:maltose} use the persistence landscape defined here to
study the maltose binding complex and \citet{Chazal:2014} apply the
bootstrap to the persistence landscape.
 The persistence landscape is related to the well
 group defined by \citet{emp:2011}.

In Section~\ref{sec:top} we provide the necessary background and
define the persistence landscape and give some of its properties.
In Section~\ref{sec:stat} we introduce the statistical theory of
persistence landscapes, which we apply to a few examples in
Section~\ref{sec:examples}.
In Section~\ref{sec:stability} we prove that the persistence landscape
is stable and that it provides lower bounds for the previously defined
bottleneck and Wasserstein distances.

\section{Topological summaries}
\label{sec:top}

The two standard topological summaries of data are the \emph{barcode}
and the \emph{persistence diagram}.  We will define a new closely-related
summary, the \emph{persistence landscape}, and then
compare it to these two previous summaries. All of these summaries are
derived from the \emph{persistence module}, which we now define.

\subsection{Persistence Modules}
\label{sec:pm}

The main algebraic object of study in topological data analysis is the
persistence module.
A \emph{persistence module} $M$ consists of a vector space
$M_a$ for all $a \in \R$ and linear maps $M(a \leq b): M_a \to M_b$
for all $a \leq b$ such that $M(a \leq a)$ is the identity map and for
all $a \leq b \leq c$, $M(b \leq c) \circ M(a \leq b) = M(a \leq c)$.

There are many ways of constructing a persistence module. One example
starts with a set of points $X = \{x_1, \ldots, x_n\}$ in the plane $M
= \R^2$ as shown in the top left of Figure~\ref{fig:balls}. To help
understand this configuration, we ``thicken'' each point, by replacing
each point, $x$, with $B_x(r) = \{y \in M \st d(x,y) \leq r\}$, a disk
of fixed radius, $r$, centered at $x$. The resulting union, $X_r =
\bigcup_{i=1}^n B_r(x_i)$, is shown in Figure~\ref{fig:balls} for various values of
$r$. For each $r$, we can calculate $H(X_r)$, the homology of the
resulting union of disks. To be precise, $H(-)$ denotes
$H_k(-,\mathbb{F})$, the singular homology functor in degree $k$ with
coefficients in a field $\mathbb{F}$.  So $H(X_r)$ is a vector space
that is the quotient of the $k$-cycles modulo those that are
boundaries.  As $r$ increases, the union of disks grows, and the
resulting inclusions induce maps between the corresponding homology
groups. More precisely, if $r \leq s$, the inclusion $\iota_r^s: X_r
\incl X_s$ induces a map $H(\iota_r^s): H(X_r) \to H(X_s)$. The images
of these maps are the \emph{persistent homology} groups. The
collection of vector spaces $H(X_r)$ and linear maps $H(\iota_r^s)$ is
a persistence module.
Note that this construction works for any set of points in $\R^n$ or
more generally in a metric space.

\begin{figure}
\centering
\includegraphics[height=20mm,angle=270]{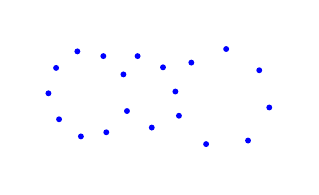}
\includegraphics[height=20mm,angle=270]{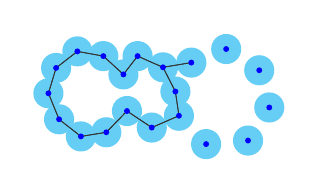}
\includegraphics[height=20mm,angle=270]{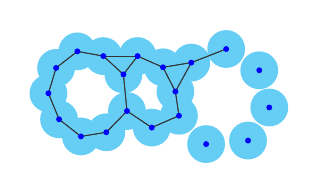}
\includegraphics[height=20mm,angle=270]{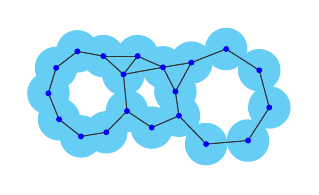}
\includegraphics[height=20mm,angle=270]{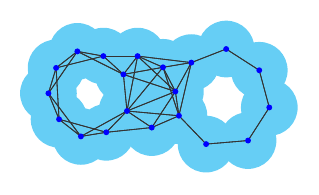}
\includegraphics[height=20mm,angle=270]{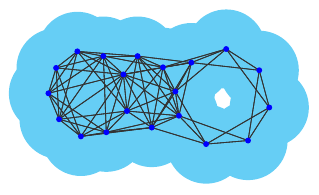}
\includegraphics[height=20mm,angle=270]{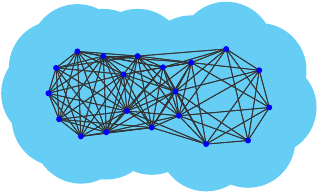}
\caption{A growing union of balls and the 1-skeleton of the
  corresponding \v Cech complex. As the radius grows, features---such
  as connected components and holes---appear and disappear. Here, the
  complexes illustrate the births and deaths of three holes, homology
  classes in degree one. The corresponding birth-death pairs are
  plotted as part of the top left of Figure~\ref{fig:pl}. }
\label{fig:balls}
\end{figure}

The union of balls $X_r$ has a nice combinatorial description.
The \emph{\v Cech complex}, $\check{C}_r(X)$, of the set of balls $\{B_{x_i}(r)\}$ is the
simplicial complex whose vertices are the points $\{x_i\}$ and whose
$k$-simplices correspond to $k+1$ balls with nonempty intersection
(see Figure~\ref{fig:balls}).
This is also called the \emph{nerve}.
It is a basic result that if the ambient space is $\R^n$, $X_r$ is
homotopy equivalent to its \v Cech complex \citep{borsuk:1948}.
So to obtain the singular homology of the union of balls,
one can calculate the simplicial homology of the corresponding \v Cech
complex.
The \v Cech complexes $\{\check{C}_r(X)\}$ together with the
inclusions $\check{C}_r(X) \subseteq \check{C}_s(X)$ for $r \leq s$ form a
filtered simplicial complex.
Applying simplicial homology we obtain a persistence module.
There exist efficient algorithms for calculating the persistent
homology of filtered simplicial complexes \citep{elz:tPaS,mms:zigzag,chen-kerber}.

  The \v Cech complex is often computationally expensive, so many
  variants have been used in computational topology. 
  A larger, but simpler complex called the Rips complex 
  has as vertices the points ${x_i}$ and has $k$-simplices
  corresponding to $k+1$ balls with all pairwise intersections
  nonempty.  Other possibilities include the witness complexes of
  \citet{deSilvaCarlsson:witness}, graph induced complexes by
  \citet{Dey:2013} and complexes built using kernel density estimators
  and triangulations of the ambient space
  \citep{bckl:nonparametric}. Some of these are used in the examples
  in Section~\ref{sec:examples}.

Given any real-valued function $f: S \to \R$ on a topological space
$S$, we can define the associated persistence module, $M(f)$, where
$M(f)(a) = H(f^{-1}((\infty,a]))$ and $M(f)(a \leq b)$ is induced by inclusion.
Taking $f$ to be the the minimum distance to a finite set of points,
$X$, we obtain the first example.

\subsection{Persistence Landscapes}
\label{sec:landscape}

In this section we define a number of functions derived from a
persistence module. Examples of each of these are given in
Figure~\ref{fig:pl}.

Let $M$ be a persistence module.
For $a \leq b$, the corresponding \emph{Betti number} of $M$,
is given by the dimension of the image of the corresponding linear
map. That is,
\begin{equation} \label{eq:rank}
  \beta^{a,b} = \dim( \im (M(a \leq b))).
\end{equation} 

\begin{lemma} \label{lem:rank}
  If $a \leq b \leq c \leq d$ then $\beta^{b,c} \geq \beta^{a,d}$.
\end{lemma}

\begin{proof}
  Since $M(a\leq d) = M(c\leq d) \circ M(b\leq c) \circ M(a\leq b)$,
  this follows from~\eqref{eq:rank}.
\end{proof}

Our simplest function, which we call the
\emph{rank function} is the function $\lambda:
\R^2 \to \R$ given by
\begin{equation*}
  \lambda(b,d) =
  \begin{cases}
    \beta^{b,d} &\text{if } b \leq d\\
    0 &\text{otherwise.}
  \end{cases}
\end{equation*}

Now let us change coordinates so that the resulting function is supported
on the upper half plane. Let
\begin{equation} \label{eq:change-of-coords}
  m = \frac{b+d}{2}, \quad \text{and} \quad h = \frac{d-b}{2}.
\end{equation}
The \emph{rescaled rank function} is the function $\lambda:
\R^2 \to \R$ given by
\begin{equation*}
  \lambda(m,h) =
  \begin{cases}
    \beta^{m-h,m+h} &\text{if } h \geq 0\\
    0 &\text{otherwise.}
  \end{cases}
\end{equation*}
                                   
Much of our theory will apply to these simple functions. However, the
following version, which we will call 
the \emph{persistence landscape}, will have some
advantages.

First let us observe that 
for a fixed $t \in \R$, $\beta^{t-\bullet,t+\bullet}$ is a decreasing
function.  That is,

\begin{lemma} \label{lem:beta-decreasing}
 For $0 \leq h_1 \leq h_2$, 
  \begin{equation*}
    \beta^{t-h_1,t+h_1} \geq \beta^{t-h_2,t+h_2}.
  \end{equation*}
\end{lemma}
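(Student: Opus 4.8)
The plan is to reduce the inequality to the functoriality of the persistence module $M$. Fix $t\in\R$ and $0\le h_1\le h_2$. Then we have the chain of inequalities $t-h_2\le t-h_1\le t+h_1\le t+h_2$, so the composition law for $M$ gives
\begin{equation*}
  M(t-h_2\le t+h_2) = M(t+h_1\le t+h_2)\circ M(t-h_1\le t+h_1)\circ M(t-h_2\le t-h_1).
\end{equation*}
In particular $M(t-h_2\le t+h_2)$ factors through $M(t-h_1\le t+h_1)$ — more precisely, its image is contained in the image of $M(t+h_1\le t+h_2)$ restricted to $\im(M(t-h_1\le t+h_1))$.

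The key step is then the elementary linear-algebra fact that if a linear map $g\colon U\to W$ factors as $g = g_2\circ g_1$ with $g_1\colon U\to V$ and $g_2\colon V\to W$, then $\dim(\im g)\le \dim(\im g_1)$ and $\dim(\im g)\le\dim(\im g_2)$, since $\im g = g_2(\im g_1)$ and a linear map cannot increase dimension. Applying this with $g = M(t-h_2\le t+h_2)$, $g_1 = M(t-h_1\le t+h_1)\circ M(t-h_2\le t-h_1)$ and $g_2 = M(t+h_1\le t+h_2)$ yields
\begin{equation*}
  \beta^{t-h_2,t+h_2} = \dim(\im M(t-h_2\le t+h_2)) \le \dim(\im g_1) \le \dim(\im M(t-h_1\le t+h_1)) = \beta^{t-h_1,t+h_1},
\end{equation*}
where the middle inequality again uses that $g_1$ factors through $M(t-h_1\le t+h_1)$.

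There is essentially no obstacle here: the statement is an immediate consequence of the factorization built into the definition of a persistence module together with the fact that composing with a linear map cannot increase the dimension of the image (and all spaces are assumed finite-dimensional, so the $\dim$'s are well defined). If one wants to be fully careful, the only thing worth spelling out is the two-sided monotonicity of $\dim\circ\im$ under pre- and post-composition, which is the one-line argument above.
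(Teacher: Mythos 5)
Your proof is correct and follows exactly the same route as the paper's: factor $M(t-h_2 \leq t+h_2)$ through $M(t-h_1 \leq t+h_1)$ via the composition law and conclude that the rank cannot increase. You simply spell out the elementary linear-algebra step that the paper leaves implicit, which is fine.
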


\begin{proof}
  Since $t-h_2 \leq t-h_1 \leq t+h_1 \leq t+h_2$, 
by Lemma~\ref{lem:rank}, $\beta^{t-h_2,t+h_2} \leq \beta^{t-h_1,t+h_1}.$
\end{proof}

\begin{definition} \label{def:landscape}
The \emph{persistence landscape} is a function $\lambda: \N \times \R
\to \eR$, where $\eR$ denotes the extended real
numbers, $[-\infty,\infty]$. Alternatively, it may be thought of as a sequence of 
functions $\lambda_k: \R \to \eR$, where $\lambda_k(t) = \lambda(k,t)$.
Define
    \begin{equation*}
        \lambda_k(t) = \sup(m \geq 0 \st \beta^{t-m,t+m} \geq k ).
 \end{equation*}
\end{definition}
 

The persistence landscape has the following properties.
\begin{lemma} \label{lem:PLproperties}
  \begin{enumerate}
    \item $\lambda_k(t) \geq 0$,
    \item \label{it:decreasing} $\lambda_k(t) \geq \lambda_{k+1}(t)$, and
    \item \label{it:lipschitz} $\lambda_k$ is 1-Lipschitz.
  \end{enumerate}
\end{lemma}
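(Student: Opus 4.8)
The plan is to derive all three properties directly from the definition $\lambda_k(t) = \sup\{m \geq 0 \mid \beta^{t-m,t+m} \geq k\}$ together with Lemma~\ref{lem:beta-decreasing}. For property~(1), I would observe that $M(t \leq t)$ is the identity on a vector space of dimension at least $k$ whenever $k \leq \dim M_t$, and in any case $m=0$ is always a candidate (or the set is empty and the supremum is $-\infty$, but with the convention $\sup\emptyset = 0$ here, or one notes $\beta^{t,t}=\dim M_t$); so $\lambda_k(t) \geq 0$, since $0$ belongs to the set over which we take the supremum whenever that set is nonempty, and the supremum of a set of nonnegative reals is nonnegative. For property~(2), the key point is set inclusion: if $\beta^{t-m,t+m} \geq k+1$ then certainly $\beta^{t-m,t+m} \geq k$, so $\{m \geq 0 \mid \beta^{t-m,t+m} \geq k+1\} \subseteq \{m \geq 0 \mid \beta^{t-m,t+m} \geq k\}$, and the supremum is monotone with respect to inclusion, giving $\lambda_{k+1}(t) \leq \lambda_k(t)$.

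The substantive part is property~(3), that each $\lambda_k$ is $1$-Lipschitz, i.e. $|\lambda_k(s) - \lambda_k(t)| \leq |s-t|$. By symmetry in $s$ and $t$ it suffices to show $\lambda_k(t) \leq \lambda_k(s) + |s-t|$. The idea is that if $m$ is a value with $\beta^{t-m,t+m} \geq k$ and $m > |s-t|$, then setting $m' = m - |s-t| \geq 0$ one has the interval $[s-m', s+m'] \subseteq [t-m, t+m]$, so the map $M(s-m' \leq s+m')$ factors through $M(t-m \leq t+m)$ (as in the proof of Lemma~\ref{lem:beta-decreasing}), whence $\beta^{s-m',s+m'} \geq \beta^{t-m,t+m} \geq k$, so $m' $ is a candidate for $\lambda_k(s)$ and $\lambda_k(s) \geq m - |s-t|$. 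Taking the supremum over all such $m$ gives $\lambda_k(s) \geq \lambda_k(t) - |s-t|$.

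The main obstacle is purely bookkeeping around the supremum and the extended-real-valued conventions: one must check that $[s-m',s+m'] \subseteq [t-m,t+m]$ really does hold for $m' = m - |s-t|$ (it does: $s - m' = s - m + |s-t| \geq t - m$ and $s + m' = s + m - |s-t| \leq t + m$), handle the degenerate cases where the defining set is empty (so $\lambda_k \equiv 0$ or is declared $0$) or where $\lambda_k(t) = \infty$ (which under the paper's standing finiteness assumptions on the vector spaces and a tameness hypothesis does not occur on the relevant range, or is handled by noting $\lambda_k(s) = \infty$ as well by the same factorization argument), and pass cleanly from the inequality "for every candidate $m$" to the inequality on suprema. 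None of these steps is deep; the only real content is the factorization of homology maps through nested intervals, which is exactly Lemma~\ref{lem:beta-decreasing} applied with shifted center.
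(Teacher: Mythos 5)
Your proposal is correct and follows essentially the same route as the paper: parts (1) and (2) are read off the definition, and part (3) is proved by shifting the center from $t$ to $s$ while shrinking the radius by $|s-t|$ and invoking nested-interval monotonicity of $\beta$, which is exactly the paper's appendix argument (the paper phrases it with an $\eps$-approximation of the supremum rather than ranging over candidates $m$, but the content is identical). One cosmetic slip: since $[s-m',s+m'] \subseteq [t-m,t+m]$, it is $M(t-m \leq t+m)$ that factors through $M(s-m' \leq s+m')$, not the reverse, though the inequality $\beta^{s-m',s+m'} \geq \beta^{t-m,t+m}$ you draw from it is the correct one.
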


The first two properties follow directly from the definition. 
We prove the third in the appendix.

To help visualize the graph of $\lambda: \N \times \R \to \eR$, we can extend it to a function $\overline{\lambda}: \R^2 \to \eR$ by setting
\begin{equation} \label{eq:fbar}
  \overline{\lambda}(x,t) =
  \begin{cases}
    \lambda(\lceil x \rceil,t), & \text{ if } x > 0, \\
    0, & \text{ if } x \leq 0.
  \end{cases}
\end{equation}

We remark that the non-persistent Betti numbers, $\{\dim(M(t))\}$, of a
persistence module $M$ can be read off from the diagonal of the rank
function, the $m$-axis of the rescaled rank function, and from the
support of the persistence landscape.

\begin{figure}
  \centering
\begin{minipage}{60mm}
\includegraphics[width=43mm]{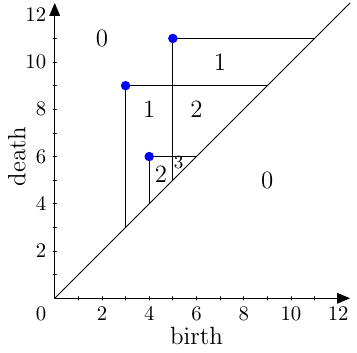} \vspace{1ex}

\includegraphics[width=53mm]{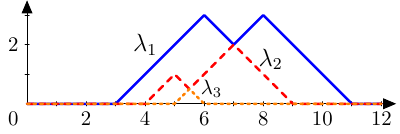}
\end{minipage}
\begin{minipage}{60mm}
\vspace{5ex}

\includegraphics[width=53mm]{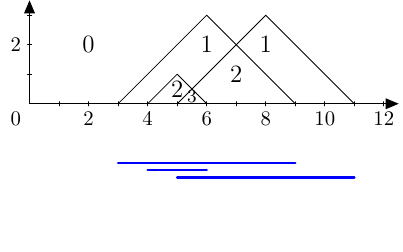} \vspace{-5ex}

\includegraphics[width=53mm]{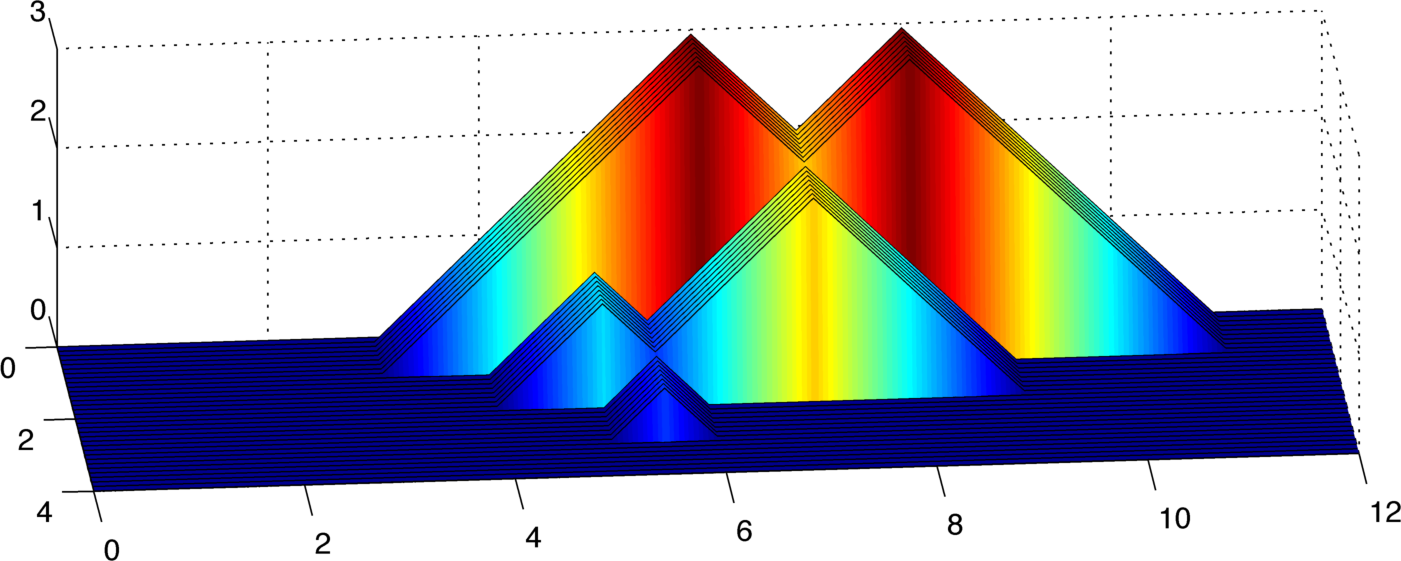}
\end{minipage}
\caption{Persistence landscapes for the homology in degree 1 of the
  example in Figure~\ref{fig:balls}.  For the rank function (top left)
  and rescaled rank function (top right) the values of the functions
  on the corresponding region are given. The top left graph also
  contains the three points of the corresponding persistence diagram. Below
  the top right graph is the corresponding barcode. We also have the
  corresponding persistence landscape (bottom left) and its 3d-version
  (bottom right). Notice that $\lambda_1$ gives a measure of the
  dominant homological feature at each point of the filtration.}
  \label{fig:pl}
\end{figure}

\subsection{Barcodes and Persistence Diagrams} \label{sec:barcode}

All of the information in a (tame) persistence module is completely contained
in a multiset of intervals called a
\emph{barcode} \citep{zomorodianCarlsson:computingPH,crawley-boevey,csgo:persistenceModules}. 
Mapping each interval to its endpoints we obtain the \emph{persistence
  diagram}. 

There exist maps in both directions between these topological
summaries and our functions.
For an example of corresponding persistence diagrams, barcodes and persistence
landscapes, see Figure~\ref{fig:pl}.
Informally, the persistence diagram consists of the ``upper-left
corners'' in our rank function. 
In the other direction, $\lambda(b,d)$ counts the number of points in
the persistence diagram in the upper left quadrant of $(b,d)$.
Informally, the barcode consists of the ``bases of the triangles'' in
the rescaled rank function, and the other direction is
obtained by ``stacking isosceles triangles'' whose bases are the
intervals in the barcode. 
We invite the reader to make the mappings precise.
For example, given a persistence diagram $\{(b_i,d_i)\}_{i=1}^n$,
\begin{equation*}
  \lambda_k(t) = k\text{th largest value of } \min(t-b_i,d_i-t)_+,
\end{equation*}
where $c_+$ denotes $\max(c,0)$.
The fact that barcodes are a complete invariant of persistence
modules is central to these equivalences.

The geometry of the space of persistence diagrams makes it hard to
work with. For example, sets of persistence diagrams need not have a
unique (Fr\'echet) mean \citep{mmh:probability}.  In contrast, the
space of persistence landscapes is very nice. So a set of persistence
landscapes has a unique mean~\eqref{eq:mean}. See
Figure~\ref{fig:mean}.

\begin{figure}
  \centering
\includegraphics[width=5cm]{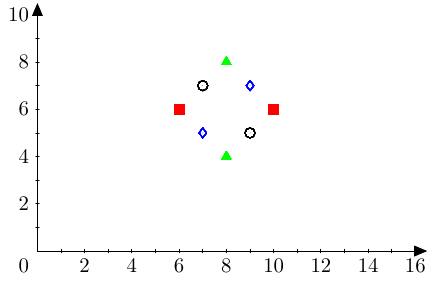}
\includegraphics[width=5cm]{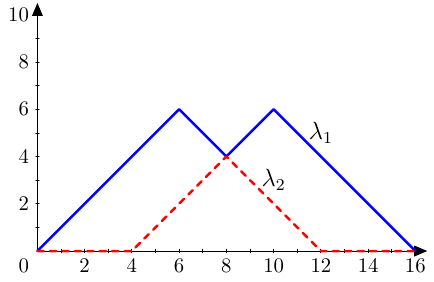}\\
\includegraphics[width=5cm]{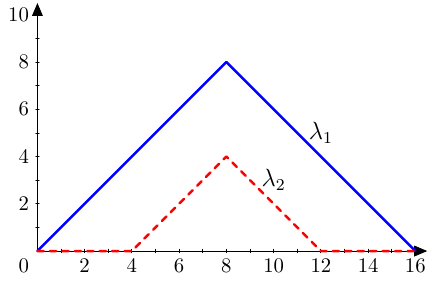}
\includegraphics[width=5cm]{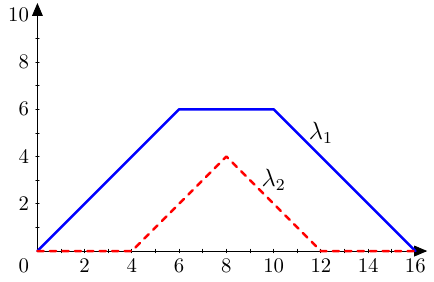}
\caption{Means of persistence diagrams and persistence landscapes. Top
  left: the rescaled persistence diagrams $\{(6,6),(10,6)\}$ and
  $\{(8,4),(8,8)\}$ have two (Fr\'echet) means: $\{(7,5),(9,7)\}$ and
  $\{(7,7),(9,5)\}$. In contrast their corresponding persistence
  landscapes (top right and bottom left) have a unique mean (bottom
  right).}
\label{fig:mean}
\end{figure}

  Compared to the persistence diagram, the barcode has extra
  information on whether or not the endpoints of the intervals are
  included. This finer information is seen in the rank function and
  rescaled rank function, but not in the persistence
  landscape. However when we pass to the corresponding $L^p$ space in
  Section~\ref{sec:norm}, this information disappears.

\subsection{Norms for Persistence Landscapes} \label{sec:norm}

Recall that for a measure space $(\S,\mathcal{A},\mu)$, and a function
$f:\S \to \R$ defined $\mu$-almost everywhere, 
for $1 \leq p < \infty$,
$\norm{f}_p = \left[\int \abs{f}^p d\mu \right]^{\frac{1}{p}}$, 
and $\norm{f}_{\infty} = \ess \sup f  = \inf \{a \st \mu\{s \in \S \st f(s) > a\} = 0\}$.
For $1 \leq p \leq \infty$, $\L^p(\S) = \{f: \S \to \R \st \norm{f}_p
< \infty\}$ and define $L^p(\S) = \L^p(\S)/\sim$, where $f \sim g$ if $\norm{f-g}_p=0$.

On $\R$ and $\R^2$ we will use the Lebesgue measure.
On $\N \times \R$, we use the product of the counting measure on $\N$ and the Lebesgue measure on $\R$.
For $1 \leq p < \infty$ and $\lambda: \N \times \R \to \eR$, 
\begin{equation*}
\norm{\lambda}_p^p = \sum_{k=1}^{\infty} \norm{\lambda_k}_p^p,
\end{equation*}
where $\lambda_k(t) = \lambda(k,t)$.  By
Lemma~\ref{lem:PLproperties}\eqref{it:decreasing},
$\norm{\lambda}_{\infty} = \norm{\lambda_1}_{\infty}$.  If we extend
$f$ to $\overline{\lambda}: \R^2 \to \eR$, as in \eqref{eq:fbar}, we
have $\norm{\lambda}_p = \norm{\overline{\lambda}}_p$, for $1 \leq p
\leq \infty$.

If $\lambda$ is any of our functions corresponding to a barcode
that is a finite collection of finite intervals, then
$\lambda \in \L^p(\S)$
for $1 \leq p \leq \infty$, where $\S$ equals $\N \times \R$ or $\R^2$.

Let $\lambda_{bd}$ and $\lambda_{mh}$ denote the rank function
and the rescaled rank function corresponding to a persistence
landscape $\lambda$, and let $D$ be the corresponding persistence
diagram.  
Let $\pers_2(D)$ denote the sum of the squares of the lengths of the
intervals in the corresponding barcode, and let $\pers_{\infty}(D)$ be
the length of the longest interval.

\begin{proposition}
  \begin{enumerate}
  \item \label{it:lambda1} $\norm{\lambda}_1 = \norm{\lambda_{mh}}_1 =
    \frac{1}{2} \norm{\lambda_{bd}}_1 
= \frac{1}{4}\pers_2(D)$, and
  \item \label{it:lambda2} $\norm{\lambda}_{\infty} =
    \norm{\lambda_1}_{\infty} 
= \frac{1}{2} \pers_{\infty}(D)$.
  \end{enumerate}
\end{proposition}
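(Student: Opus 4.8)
The plan is to prove everything by reducing to a single interval, then summing. Because barcodes are a complete invariant, it suffices to understand how each of the three functions $\lambda_{bd}$, $\lambda_{mh}$, and $\lambda$ decomposes over the intervals of the barcode. Concretely, if the barcode is a finite collection of finite intervals $\{(b_i,d_i)\}_{i=1}^n$, then the Betti number $\beta^{a,c}$ counts the number of intervals containing $[a,c]$, i.e. with $b_i \le a$ and $c \le d_i$. So first I would establish the pointwise ``stacking'' description: $\lambda_{bd}(b,d) = \#\{i : b_i \le b \le d \le d_i\}$, and correspondingly the persistence landscape is obtained by stacking the isosceles triangles $T_i$ supported on $(b_i,d_i)$ with apex at height $(d_i-b_i)/2$ over the midpoint $(b_i+d_i)/2$; that is, $\lambda_k(t)$ is the $k$-th largest value among $\{T_i(t)\}$. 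This is exactly the informal content of Section~\ref{sec:barcode}, and I would simply make it precise.

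Given that, I would handle the $p=1$ statement by a direct computation on one interval. For a single interval $(b,d)$ of length $\ell = d-b$: the rank function $\lambda_{bd}$ is the indicator of the triangle $\{(x,y) : b \le x \le y \le d\}$, which has area $\ell^2/2$; the rescaled rank function $\lambda_{mh}$ is the indicator of the isosceles triangle with base $\ell$ on the $m$-axis and height $\ell/2$, which has area $\ell^2/4$; and for the landscape, since the triangles coming from distinct intervals may overlap, I would use the key fact that integration (the $L^1$ norm) is additive under stacking — that is, $\int \lambda = \sum_i \int T_i$ even when the $T_i$ overlap, because stacking redistributes the total mass without changing it. Each triangle $T_i$ is the graph region of a function with base $\ell_i$ and height $\ell_i/2$, of area $\ell_i^2/4$. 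Summing, $\norm{\lambda}_1 = \frac14\sum_i \ell_i^2 = \frac14 \pers_2(D)$, and the same summation gives $\norm{\lambda_{mh}}_1 = \frac14\sum_i\ell_i^2$ and $\norm{\lambda_{bd}}_1 = \frac12\sum_i\ell_i^2$, yielding all the claimed equalities in part~(\ref{it:lambda1}).

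For part~(\ref{it:lambda2}), the first equality $\norm{\lambda}_\infty = \norm{\lambda_1}_\infty$ is already noted in the text as a consequence of Lemma~\ref{lem:PLproperties}(\ref{it:decreasing}), so I would just cite it. For the second equality: by the stacking description, $\lambda_1(t) = \max_i T_i(t)$, and the maximum over $t$ of the triangle $T_i$ is its apex height $\ell_i/2$, attained at the midpoint of $(b_i,d_i)$. Hence $\sup_t \lambda_1(t) = \max_i \ell_i/2 = \frac12\pers_\infty(D)$, which is $\frac12$ times the length of the longest interval. One small care point here — the reason we may pass freely between the essential supremum and the genuine supremum is continuity of $\lambda_1$ (it is $1$-Lipschitz by Lemma~\ref{lem:PLproperties}(\ref{it:lipschitz})).

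The main obstacle is the additivity-under-stacking claim for the $L^1$ norm, namely $\int \lambda = \sum_k \int \lambda_k = \sum_i \int T_i$. Intuitively the total area is preserved because at each $t$ the multiset $\{T_i(t)\}_i$ and the multiset $\{\lambda_k(t)\}_k$ are the same (one is a reordering of the other, padded with zeros), so $\sum_k \lambda_k(t) = \sum_i T_i(t)$ pointwise; integrating in $t$ and using Tonelli to exchange the sum over $k$ with the integral gives the result. I expect the only real work is to state this cleanly — verifying that for each fixed $t$ the sorted values $\lambda_k(t)$ really are a permutation of $\{T_i(t)\}$ together with zeros — and then the rest is the elementary triangle-area arithmetic above.
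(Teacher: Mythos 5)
Your proposal is correct and follows essentially the same route as the paper's proof: decompose over the intervals of the barcode, compute the per-interval triangle areas ($\ell_i^2/2$ for the rank function, $\ell_i^2/4$ for the rescaled rank function and for each stacked triangle), and use the fact that stacking preserves total mass (your pointwise multiset identity $\sum_k \lambda_k(t) = \sum_i T_i(t)$ is a precise version of the paper's ``both are the volume of the same solid''). You supply more detail than the paper's sketch --- notably the explicit justification that $\lambda_k(t)$ is the $k$-th largest of the $T_i(t)$, and the remark that continuity lets you replace the essential supremum by the supremum --- but no new ideas are needed and none are missing.
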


  \begin{proof}
    \begin{enumerate}
    \item To see that $\norm{\lambda}_1 = \norm{\lambda_{mh}}$ we
      remark that both are the volume of the same solid. The change of
      coordinates
      implies that $\norm{\lambda_{mh}}_1 =
      \frac{1}{2}\norm{\lambda_{bd}}_1$.  If $D = \{(b_i,d_i)\}$, then
      each point $(b_i,d_i)$ contributes $h_i^2$ to the volume
      $\norm{\lambda_{mh}}_1$, where $h_i=\frac{d_i-b_i}{2}$.  So
      $\norm{\lambda_{mh}}_1 = \sum_i h_i^2%
$. Finally, $\pers_2(D) = \sum_i
      (2h_i)^2 = 4\sum_ih_i^2$.
    \item Lemma~\ref{lem:PLproperties}\eqref{it:decreasing} implies
      that $\norm{\lambda}_{\infty} = \norm{\lambda_1}_{\infty}$. If
      $D = \{(b_i,d_i)\}$, then 
$\norm{\lambda}_{\infty} = 
\sup_i \frac{d_i-b_i}{2}$. 
    \end{enumerate}
  \end{proof}

  We remark that the quantities in \ref{it:lambda1} and
  \ref{it:lambda2} also equal $W_2(D,\emptyset)^2$ and
  $W_{\infty}(D,\emptyset)$ respectively (see
  Section~\ref{sec:stability} for the corresponding definitions).

\section{Statistics with landscapes}
\label{sec:stat}

Now let us take a probabilistic viewpoint.  First, we assume that our
persistence landscapes lie in $L^p(\S)$ for some $1 \leq p < \infty$,
where $\S$ equals $\N \times \R$ or $\R^2$.  In this case, $L^p(\S)$
is a separable Banach space.  When $p=2$ we have a Hilbert space;
however, we will not use this structure.  In some
examples, the persistence landscapes will only be stable for some
$p>2$ (see Theorem~\ref{thm:landscape-stability}).

\subsection{Landscapes as Banach Space Valued Random Variables}
\label{sec:banach}

Let $X$ be a random variable on some underlying
probability space $(\Omega, \F, P)$, with corresponding 
persistence landscape $\Lambda$, a Borel random variable with
values in the separable Banach space $L^p(\S)$.
That is, for $\omega \in \Omega$, $X(\omega)$ is the data and
$\Lambda(\omega) = \lambda(X(\omega)) =: \lambda$ is the corresponding
topological summary statistic.

Now let $X_1, \ldots, X_n$ be independent and identically distributed
copies of $X$, and let $\Lambda^1,\ldots,\Lambda^n$ be the
corresponding persistence landscapes.  Using the vector space
structure of $L^p(\S)$, the \emph{mean landscape}
$\overline{\Lambda}^n$ is given by the pointwise mean.
That is, $\overline{\Lambda}^n(\omega) = \overline{\lambda}^n$, where
  \begin{equation} \label{eq:mean}
    \overline{\lambda}^n(k,t) = \dfrac{1}{n} \sum_{i=1}^n \lambda^i(k,t).
  \end{equation}
Let us interpret the mean landscape.
If $B_1,\ldots,B_n$ are the barcodes corresponding to the persistence landscapes
$\lambda^1,\ldots,\lambda^n$,
then for $k\in \N$ and $t \in \R$, $\overline{\lambda}^n(k,t)$ is the average value of the largest radius interval centered at $t$ that is contained in $k$ intervals in the barcodes $B_1,\ldots,B_n$.

For those used to working with persistence diagrams, it
is tempting to try to find a persistence diagram whose persistence
landscape is closest to a given mean landscape. While this is an
interesting mathematical question, we would like to suggest that the more
important practical issue is using the mean landscape to understand the data.

We would like to be able to say that the mean landscape converges to
the expected persistence landscape. To say this precisely we need some
notions from probability in Banach spaces.

\subsection{Probability in Banach Spaces}
\label{sec:probability}

Here we present some results from probability in Banach spaces.  For a
more detailed exposition we refer the reader
to \citet{ledoux-talagrand:book}.

Let $\B$ be a real separable Banach space with norm $\norm{\cdot}$.
Let $(\Omega,\F,P)$ be a probability space, and let $V: (\Omega,\F,P)
\to \B$ be a Borel random variable with values in $\B$.
The composite $\norm{V}: \Omega \xto{V} \B \xto{\norm{\cdot}}
\R$ is a real-valued random variable.
Let $\B^*$ denote the topological dual space of continuous linear
real-valued functions on $\B$.
For $f \in \B^*$, the composite $f(V): \Omega \xto{V} \B \xto{f} \R$
is a real-valued random variable.

For a real-valued random variable $Y:(\Omega,\F,P) \to \R$, the
\emph{mean} or  \emph{expected value}, is given by $E(Y) = \int Y \ dP
= \int_{\Omega} Y(\omega) \ dP(\omega)$.
%
%
We call an element $E(V) \in \B$ the \emph{Pettis integral} of $V$ if
$E(f(V)) = f(E(V))$ for all $f \in \B^*$.

\begin{proposition}
  If $E\norm{V} < \infty$, then $V$ has a Pettis integral and
  $\norm{E(V)} \leq E\norm{V}$.
\end{proposition}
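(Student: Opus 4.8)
The plan is to establish existence of the Pettis integral first, then prove the norm inequality as a consequence. For existence, the key observation is that $E\norm{V} < \infty$ forces $E\abs{f(V)} < \infty$ for every $f \in \B^*$, since $\abs{f(V)} \leq \norm{f}_{\B^*} \norm{V}$ pointwise, so the real-valued random variable $f(V)$ is integrable and $E(f(V))$ is well-defined. This produces a linear functional $\Phi \colon \B^* \to \R$, $\Phi(f) = E(f(V))$, which is bounded with $\abs{\Phi(f)} \leq \norm{f}_{\B^*} E\norm{V}$, i.e. $\Phi \in \B^{**}$ with $\norm{\Phi} \leq E\norm{V}$. The Pettis integral, if it exists, is an element $E(V) \in \B$ whose image under the canonical embedding $\B \incl \B^{**}$ is exactly $\Phi$. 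So the real content is showing $\Phi$ lies in the image of this embedding; this is where separability is used.

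For the existence step I would argue as follows. Since $V$ is a Borel random variable into a separable Banach space, its law $\P_*$ is a Radon (tight) Borel probability measure on $\B$; by tightness and $E\norm{V} < \infty$, one can reduce, via a limiting argument over an increasing sequence of compact sets, to the case where $V$ is (approximately) supported on a compact, hence separable, subset. More directly: the closed linear span $\B_0$ of the (essential) range of $V$ is a separable subspace, and it suffices to find the integral there. On a separable space one can approximate $V$ in $L^1$-norm by simple (finitely-valued) random variables $V_n$ — this is the standard Bochner-type construction, using that $\{x \colon \norm{x} \leq \norm{V(\omega)}\}$-style truncations together with a countable dense set let us build $V_n$ with $\norm{V_n - V} \to 0$ pointwise and dominated by $2\norm{V}$. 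Each $V_n = \sum_j x_j \mathbf{1}_{A_j}$ has an obvious integral $E(V_n) = \sum_j x_j P(A_j) \in \B$, and $\norm{E(V_n) - E(V_m)} \leq E\norm{V_n - V_m} \to 0$ by the triangle inequality for the Bochner sums, so $(E(V_n))$ is Cauchy in $\B$ and converges to some $E(V) \in \B$. One then checks $f(E(V)) = \lim_n f(E(V_n)) = \lim_n E(f(V_n)) = E(f(V))$ for every $f \in \B^*$, where the last equality uses dominated convergence ($\abs{f(V_n)} \leq \norm{f} \cdot 2\norm{V}$), confirming $E(V)$ is the Pettis integral.

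The norm inequality is then immediate: for any $f \in \B^*$,
\begin{equation*}
  \abs{f(E(V))} = \abs{E(f(V))} \leq E\abs{f(V)} \leq \norm{f}_{\B^*}\, E\norm{V},
\end{equation*}
using linearity of $f$, the Pettis property, Jensen's (or the triangle) inequality for the real-valued integral, and the operator-norm bound $\abs{f(x)} \leq \norm{f}_{\B^*}\norm{x}$. Taking the supremum over $f$ with $\norm{f}_{\B^*} \leq 1$ and invoking the Hahn–Banach theorem (which gives $\norm{y} = \sup_{\norm{f} \leq 1} \abs{f(y)}$ for $y = E(V)$) yields $\norm{E(V)} \leq E\norm{V}$.

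The main obstacle is the existence half, specifically the Bochner-approximation argument: one must be careful that the approximating simple functions can be chosen measurable and $L^1$-convergent, which genuinely requires separability of the range (or of the closed span of the range) so that countably many balls suffice to partition the range finely. The inequality itself is routine once existence and the defining property are in hand. An alternative to the explicit Bochner construction would be to cite the Pettis measurability theorem together with the standard fact that strongly measurable, Bochner-integrable functions into a Banach space have a Bochner integral which is automatically a Pettis integral; but since the paper is self-contained in tone, spelling out the simple-function approximation is probably the cleaner route.
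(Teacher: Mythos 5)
Your argument is correct, but note that the paper does not actually prove this proposition: it is stated as a standard fact from probability in Banach spaces, with the reader referred to Ledoux--Talagrand for details. What you have written is essentially the textbook proof that the paper implicitly relies on: under the standing assumption that $\B$ is separable, a Borel random variable $V$ is strongly measurable (Pettis measurability theorem), so $E\norm{V}<\infty$ makes $V$ Bochner integrable; the Bochner integral is constructed by simple-function approximation exactly as you describe, it satisfies $f(E(V))=E(f(V))$ for all $f\in\B^*$ (so it is in particular a Pettis integral), and the bound $\norm{E(V)}\leq E\norm{V}$ follows either by passing the triangle inequality through the limit of simple functions or, as you do, from $\norm{y}=\sup_{\norm{f}\leq 1}\abs{f(y)}$ via Hahn--Banach. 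Two small remarks: the opening reduction via tightness and compact sets is unnecessary here, since the ambient space is already assumed separable, so you can go straight to the simple-function approximation; and when you write $\norm{E(V_n)-E(V_m)}\leq E\norm{V_n-V_m}\to 0$, you should say explicitly that this follows from $E\norm{V_n-V}\to 0$, which is dominated convergence using the pointwise bound $\norm{V_n-V}\leq 3\norm{V}$ and the integrability of $\norm{V}$. With those cosmetic points addressed, your proof is complete and is the standard one.
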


Now let $(V_n)_{n \in \N}$ be a sequence of independent copies of $V$.
For each $n \geq 1$, let  $S_n = V_1 + \cdots + V_n$.
For a sequence $(Y_n)$ of $\B$-valued random variables, we say that 
$(Y_n)$ \emph{converges almost surely} to a $\B$-valued random
variable $Y$, if 
$P(\lim_{n\to \infty} Y_n = Y) = 1$.

\begin{theorem}[Strong Law of Large Numbers] \label{thm:slln-banach}
  $(\frac{1}{n}S_n) \to E(V)$ almost surely if and
  only if $E\norm{V} < \infty$.
\end{theorem}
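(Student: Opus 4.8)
The plan is to handle the two implications separately, reducing the vector-valued statement to the classical scalar strong law of large numbers in both directions.

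For necessity, I would argue that if $\tfrac1n S_n$ converges almost surely to some $\B$-valued random variable $L$, then $\tfrac1n V_n = \tfrac1n S_n - \tfrac{n-1}{n}\cdot\tfrac{1}{n-1}S_{n-1} \to L - L = 0$ almost surely, hence $\norm{V_n}/n \to 0$ almost surely and in particular $P(\norm{V_n}\ge n \text{ infinitely often}) = 0$. Since the $V_n$ are independent, the events $\{\norm{V_n}\ge n\}$ are independent, so the divergence half of Borel--Cantelli forces $\sum_n P(\norm{V_n}\ge n) < \infty$; as the $\norm{V_n}$ are identically distributed copies of $\norm{V}$, the standard tail-sum criterion $\sum_n P(\norm{V}\ge n) < \infty \iff E\norm{V}<\infty$ gives $E\norm{V}<\infty$ (and then the sufficiency direction identifies $L$ with $E(V)$).

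For sufficiency, assume $E\norm{V}<\infty$, so that the Pettis integral $E(V)$ exists by the preceding proposition. The key reduction is approximation by finitely-valued random variables. Fix $\eps>0$. Using $E\norm{V}<\infty$ (so $E[\norm{V}\mathbf{1}_{\norm{V}>R}]\to 0$ as $R\to\infty$) together with the tightness of the law of $V$ on the separable Banach space $\B$ (a Borel probability measure on a separable metric space is Radon, so there is a compact $K$ with $P(V\notin K)$ as small as desired), I would construct a measurable map $\phi:\B\to\B$ with finite image so that the simple random variable $W=\phi(V)$ satisfies $E\norm{V-W}<\eps$: cover the relevant compact part of the support by finitely many $\eps$-balls $B(c_1,\eps),\dots,B(c_m,\eps)$, send each $x\in\bigcup_j B(c_j,\eps)$ to the center $c_j$ of the first ball containing it, and send all remaining points to $0$. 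Then $W_i=\phi(V_i)$ are independent copies of $W$, and $E(W)=\sum_{j=1}^m c_j\,P(W=c_j)\in\B$ is visibly its own Pettis integral with $\norm{E(W)-E(V)}=\norm{E(W-V)}\le E\norm{W-V}<\eps$.

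Finally I would split $\tfrac1n S_n = \tfrac1n\sum_{i=1}^n W_i + \tfrac1n\sum_{i=1}^n(V_i-W_i)$. Writing $W=\sum_{j=1}^m c_j\,\mathbf{1}\{W=c_j\}$, the first term equals $\sum_{j=1}^m c_j\big(\tfrac1n\sum_{i=1}^n\mathbf{1}\{W_i=c_j\}\big)$, and each scalar average tends almost surely to $P(W=c_j)$ by the classical SLLN, so $\tfrac1n\sum_i W_i\to E(W)$ almost surely as a finite sum. For the second term, $\norm{\tfrac1n\sum_{i=1}^n(V_i-W_i)}\le \tfrac1n\sum_{i=1}^n\norm{V_i-W_i}$, and since the $\norm{V_i-W_i}$ are iid with finite mean $E\norm{V-W}<\eps$, the scalar SLLN gives $\limsup_n \tfrac1n\sum_i\norm{V_i-W_i}=E\norm{V-W}<\eps$ almost surely. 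Combining these with the bound $\norm{E(W)-E(V)}<\eps$ yields $\limsup_n\norm{\tfrac1n S_n - E(V)}\le 2\eps$ almost surely; taking $\eps=1/k$ and intersecting the resulting probability-one events over $k\in\N$ gives $\tfrac1n S_n\to E(V)$ almost surely. The main obstacle is the approximation step producing the finitely-valued $W$ with $E\norm{V-W}$ small: this is precisely where separability is indispensable (infinite-dimensional closed balls are not compact, so tightness of the law must replace compactness of balls), and it is the only genuinely measure-theoretic input; once it is available, the argument runs entirely on the scalar SLLN, Borel--Cantelli, and the triangle inequality.
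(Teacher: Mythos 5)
The paper does not prove this theorem at all: it is quoted as a known result (Mourier's strong law of large numbers for separable Banach spaces) with a pointer to Ledoux--Talagrand, so there is no in-paper argument to compare against. Your proposal is a correct, self-contained proof, and it is in fact the classical proof of that theorem: necessity via $\tfrac1n V_n\to 0$, independence of the events $\{\norm{V_n}\ge n\}$, the divergence half of Borel--Cantelli, and the tail-sum criterion for integrability; sufficiency via approximation of $V$ in $L^1$ of the norm by a simple random variable $W$, the scalar SLLN applied coordinate-by-coordinate to the finitely many indicator averages and to the iid real variables $\norm{V_i-W_i}$, and a $2\eps$ triangle-inequality finish with $\eps=1/k$. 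The one step that deserves a little more care than you give it is the construction of $W$ with $E\norm{V-W}<\eps$: tightness of the law of $V$ (Radon-ness on a separable metric space) only makes $P(V\notin K)$ small, and you must combine this with the uniform integrability of $\norm{V}$ (which follows from $E\norm{V}<\infty$) to control $E[\norm{V}\mathbf{1}_{V\notin \bigcup_j B(c_j,\eps)}]$, not just its probability; you mention both ingredients, so the argument closes, but the splice between them should be made explicit. With that caveat, the proof is sound, and it correctly uses only what the paper has already set up (the Pettis integral and the bound $\norm{E(V)}\le E\norm{V}$ applied to $V-W$); separability enters exactly where you say it does.
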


For a sequence $(Y_n)$ of $\B$-valued random variables, we say that 
$(Y_n)$ \emph{converges weakly} to a $\B$-valued random
variable $Y$, if 
$\lim_{n\to\infty} E(\varphi(Y_n)) = E(\varphi(Y))$ for all bounded
continuous functions $\varphi: \B \to \R$.
A random variable $G$ with values in $\B$ is said to be
\emph{Gaussian} if for each $f \in \B^*$, $f(G)$ is a real valued
Gaussian random variable with mean zero.  
The \emph{covariance structure} of a $\B$-valued random variable, $V$,
is given by the expectations $E[(f(V)-E(f(V)))(g(V)-E(g(V)))]$, where
$f,g \in B^*$.  
A Gaussian random variable is determined by its covariance structure.
From \citet{hjp:lln-clt-banach} we have the following.

\begin{theorem}[Central Limit Theorem]
\label{thm:clt-banach}

  Assume that $\B$ has type 2. (For example $\B = L^p(\S)$, with $2
  \leq p < \infty$.) If $E(V) = 0$ and $E(\norm{V}^2) < \infty$ then
  $\frac{1}{\sqrt{n}}S_n$ converges weakly to a Gaussian random
  variable $G(V)$ with the same covariance structure as $V$.
\end{theorem}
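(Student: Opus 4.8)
The plan is to deduce the statement from the classical central limit theorem on $\R$ together with a tightness argument, the type 2 hypothesis being the one essential new ingredient. Recall that $\B$ has \emph{type 2} means there is a constant $C$ with $E\norm{\sum_i \eps_i x_i}^2 \leq C^2 \sum_i \norm{x_i}^2$ for every finite family $x_1,\dots,x_m \in \B$ and independent Rademacher signs $\eps_i$; and recall (see \cite{ledoux-talagrand:book}) that, because $\B$ is separable, a sequence of $\B$-valued random variables converges weakly to a limit $G$ if and only if (i) $f(S_n/\sqrt n) \weak f(G)$ in $\R$ for every $f \in \B^*$ and (ii) the sequence $(S_n/\sqrt n)$ is uniformly tight.

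First I would check (i) and pin down the limit. For fixed $f \in \B^*$ the real random variables $f(V_i)$ are i.i.d., mean zero (since $E(V)=0$), with $E(f(V)^2) \leq \norm{f}^2 E(\norm{V}^2) < \infty$, so the classical CLT gives $f(S_n/\sqrt n) = \frac{1}{\sqrt n}\sum_i f(V_i) \weak N(0, E(f(V)^2))$. Applying this to $f+g$ and polarizing shows any Gaussian limit $G$ must satisfy $E[f(G)g(G)] = E[f(V)g(V)]$, i.e. have the covariance structure of $V$; since a Gaussian measure on a separable Banach space is determined by its covariance structure, the limit $G(V)$ is well defined, and (i) holds with $G = G(V)$.

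The heart is (ii), where type 2 is used. I would invoke the standard flat concentration criterion: $(S_n/\sqrt n)$ is uniformly tight provided each $f(S_n/\sqrt n)$ is tight on $\R$ --- immediate from (i) --- and provided that for every $\eps > 0$ there is a finite-dimensional subspace $F \subseteq \B$ with $\sup_n P(d(S_n/\sqrt n, F) > \eps) < \eps$. To build $F$: by separability there is a Borel, finitely-valued $V' = \phi(V)$ with $\delta^2 := E(\norm{V-V'}^2)$ as small as we wish; set $\tilde V' = V' - E(V')$, which still takes finitely many values in a fixed finite-dimensional subspace $F$ and now has mean zero, and observe $E(\norm{V-\tilde V'}^2) \leq 4\delta^2$ because $\norm{E(V')} = \norm{E(V'-V)} \leq \delta$ (using $E(V)=0$). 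Since $\frac{1}{\sqrt n}\sum_i \tilde V_i' \in F$, we have $d(S_n/\sqrt n, F) \leq \norm{\frac{1}{\sqrt n}\sum_{i=1}^n (V_i - \tilde V_i')}$, a normalized sum of i.i.d. mean-zero vectors, so by the desymmetrization bound $E\norm{\sum W_i}^2 \leq 4 E\norm{\sum \eps_i W_i}^2$ and then type 2,
\[
E\Big\lVert \tfrac{1}{\sqrt n}\textstyle\sum_{i=1}^n (V_i - \tilde V_i')\Big\rVert^2 \;\leq\; 4C^2\, E(\norm{V - \tilde V'}^2) \;\leq\; 16 C^2 \delta^2 ,
\]
uniformly in $n$. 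Choosing $V'$ so that $16C^2\delta^2 < \eps^3$ and using Markov's inequality gives $\sup_n P(d(S_n/\sqrt n, F) > \eps) < \eps$, as needed.

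Finally I would assemble the pieces: by Prokhorov, (ii) makes $(S_n/\sqrt n)$ relatively compact for weak convergence; by (i) every subsequential limit has one-dimensional images $N(0,E(f(V)^2))$ and hence coincides with $G(V)$ (a Borel probability measure on a separable Banach space is determined by its characteristic functional, i.e. by its one-dimensional images); therefore the whole sequence converges weakly to $G(V)$. The main obstacle is step (ii), and specifically the uniform-in-$n$ second-moment estimate for the truncation remainder $V_i - \tilde V_i'$: this is the only place the hypothesis is used, it genuinely requires type 2 (Hoffmann-J\o rgensen and Pisier, \cite{hjp:lln-clt-banach}, show type 2 is also necessary for the CLT to hold for all such $V$), and the recentering of $V'$ is needed precisely so that the discarded mean does not contribute a term of size $\sqrt n$.
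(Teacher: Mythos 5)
The paper offers no proof of this theorem at all: it is quoted, with attribution, from Hoffmann-J{\o}rgensen and Pisier, and the paper only ever uses it as a black box (via Theorem~\ref{thm:clt}). So there is nothing in the text to compare your argument against; what you have written is essentially the standard proof of the sufficiency half of the Hoffmann-J{\o}rgensen--Pisier theorem (the one in \cite{ledoux-talagrand:book}, Ch.~10), and it is correct. Your two reductions are the right ones: the one-dimensional marginals are handled by the classical CLT plus the Pettis-integral identity $E(f(V))=f(E(V))=0$, and the covariance identification by polarization is sound because convergence of the Gaussian marginals $N(0,\sigma_n^2)$ forces convergence of the variances. The genuinely Banach-space step is tightness, and your chain --- Bochner-approximate $V$ by a simple function, recenter (the $\norm{E(V')}\le\delta$ bound is what keeps the discarded mean from contributing at scale $\sqrt n$), bound $d(S_n/\sqrt n,F)$ by the remainder sum, desymmetrize with constant $4$, and apply type~2 conditionally on the $W_i$ to get the $n$-uniform bound $16C^2\delta^2$ --- is exactly where the hypothesis enters and is carried out correctly. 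The one ingredient you import without proof is the flat-concentration criterion for uniform tightness (one-dimensional tightness plus approximate finite-dimensionality implies tightness); that is a standard lemma of de Acosta type and is fair to cite, but it is the only step whose omission leaves real work hidden, since turning ``close to a finite-dimensional subspace'' into an actual compact set requires a projection argument and an intersection over scales. Were this theorem actually being proved in the paper rather than cited, your write-up would be an acceptable proof modulo that one quoted lemma.
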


\subsection{Convergence of Persistence Landscapes}
\label{sec:main-thms}

Now we will apply the results of the previous section to persistence landscapes.

Theorem~\ref{thm:slln-banach} directly implies the following.

\begin{theorem}[Strong Law of Large Numbers for persistence
  landscapes]
\label{thm:slln} \hspace*{1em}\\
  $\overline{\Lambda}^n \to E(\Lambda)$ almost surely if and
  only if $E\norm{\Lambda} < \infty$.
\end{theorem}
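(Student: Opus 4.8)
The plan is to derive Theorem~\ref{thm:slln} as an immediate specialization of the Banach-space Strong Law of Large Numbers, Theorem~\ref{thm:slln-banach}, by checking that persistence landscapes fit into that framework. First I would recall the setup from Section~\ref{sec:banach}: we have assumed our persistence landscapes lie in $L^p(\S)$ for some $1 \leq p < \infty$, where $\S$ is $\N \times \R$ or $\R^2$, and that $L^p(\S)$ is a separable Banach space. The map $\lambda(X): (\Omega,\F,P) \to L^p(\S)$ is a Borel random variable with values in this space, and $\lambda(X_1), \lambda(X_2), \ldots$ is a sequence of independent copies of $\lambda(X)$, since the $X_i$ are i.i.d.\ and $\lambda$ is a measurable (indeed deterministic) transformation.

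Next I would identify the quantities. With $\B = L^p(\S)$ and $V = \lambda(X)$, the partial sums are $S_n = \lambda(X_1) + \cdots + \lambda(X_n)$, so $\frac{1}{n} S_n = \overline{\lambda(X)}_n$ is exactly the mean landscape defined pointwise in Section~\ref{sec:banach}. The Pettis integral $E(V) = E(\lambda(X))$ is the expected persistence landscape, which exists as an element of $L^p(\S)$ precisely when $E\norm{\lambda(X)} < \infty$, by the Pettis integral proposition stated in Section~\ref{sec:probability}. Then Theorem~\ref{thm:slln-banach} says that $(\frac{1}{n} S_n) \to E(V)$ almost surely if and only if $E\norm{V} < \infty$; substituting the identifications above gives exactly the statement that $\overline{\lambda(X)}_n \to E(\lambda(X))$ almost surely if and only if $E\norm{\lambda(X)} < \infty$.

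The one point that deserves a word of care — and the only candidate for a ``main obstacle,'' though it is really a routine verification — is confirming that $\overline{\lambda(X)}_n$, defined pointwise, genuinely coincides with $\frac{1}{n} S_n$ as an element of $L^p(\S)$, i.e.\ that the pointwise mean of finitely many $L^p$ functions is the $L^p$ sum; this is immediate from the vector space structure of $L^p(\S)$ and finiteness of the sum. One should also note that almost-sure convergence in $\B$ in the sense defined in Section~\ref{sec:probability} (namely $P(\lim_n Y_n = Y) = 1$ in the norm topology of $\B$) is the meaning intended in the theorem statement. With these identifications in hand, the proof is a single sentence: apply Theorem~\ref{thm:slln-banach} with $\B = L^p(\S)$ and $V = \lambda(X)$.

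I expect the author's proof to be essentially this one-line invocation, since the excerpt itself says ``Theorem~\ref{thm:slln-banach} directly implies the following.'' There is no real mathematical content beyond matching notation; the substance was all front-loaded into establishing that $L^p(\S)$ is a separable Banach space and that $\lambda(X)$ is a Borel random variable into it.
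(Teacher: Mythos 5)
Your proposal is correct and matches the paper exactly: the author gives no separate proof, stating only that Theorem~\ref{thm:slln-banach} directly implies the result, which is precisely your one-line invocation with $\B = L^p(\S)$ and $V = \lambda(X)$. The identifications you spell out (mean landscape as $\frac{1}{n}S_n$, Pettis integral as $E(\lambda(X))$) are the right ones and are all the content there is.
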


\begin{theorem}[Central Limit Theorem for peristence landscapes] 
\label{thm:clt}
  Assume $p\geq 2$.
  If $E\norm{\Lambda} < \infty$ and $E(\norm{\Lambda}^2) < \infty$ then
  $\sqrt{n}[\overline{\Lambda}^n - E(\Lambda)]$ converges weakly
  to a Gaussian random variable with the same covariance
  structure as $\Lambda$.
\end{theorem}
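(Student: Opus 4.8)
The plan is to derive this directly from the Banach-space Central Limit Theorem (Theorem~\ref{thm:clt-banach}) by setting $V = \lambda(X) - E(\lambda(X))$, so that $E(V) = 0$ by construction and $S_n/\sqrt{n} = \sqrt{n}[\overline{\lambda(X)}_n - E(\lambda(X))]$. The first step is to check the hypotheses of Theorem~\ref{thm:clt-banach} are met: the ambient space $\B = L^p(\S)$ with $2 \leq p < \infty$ has type $2$ (this is exactly the parenthetical example cited in the statement of that theorem, so it may be invoked directly), and the second-moment condition $E(\norm{V}^2) < \infty$ must follow from the assumed $E(\norm{\lambda(X)}^2) < \infty$.

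The main technical point is therefore the reduction from $E(\norm{\lambda(X)}^2) < \infty$ to $E(\norm{V}^2) < \infty$ together with the existence and subtraction of the Pettis integral $E(\lambda(X))$. Here I would argue: since $E\norm{\lambda(X)} < \infty$, the Pettis integral $E(\lambda(X)) \in \B$ exists by the Proposition preceding Theorem~\ref{thm:slln-banach}, and $\norm{E(\lambda(X))} \leq E\norm{\lambda(X)} < \infty$, so $E(\lambda(X))$ is a genuine (deterministic) element of $\B$. Then $\norm{V} = \norm{\lambda(X) - E(\lambda(X))} \leq \norm{\lambda(X)} + \norm{E(\lambda(X))}$, and squaring and taking expectations gives $E(\norm{V}^2) \leq 2\,E(\norm{\lambda(X)}^2) + 2\norm{E(\lambda(X))}^2 < \infty$. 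This confirms the second-moment hypothesis. (The condition $E\norm{\lambda(X)} < \infty$ in the statement is precisely what we need for this subtraction to make sense, which is why it appears alongside the second-moment condition.)

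With the hypotheses verified, Theorem~\ref{thm:clt-banach} yields that $S_n/\sqrt{n}$ converges weakly to a Gaussian random variable $G(V)$ with the same covariance structure as $V$. The final step is to identify this with the claimed limit: $S_n/\sqrt{n} = \sqrt{n}[\overline{\lambda(X)}_n - E(\lambda(X))]$ by definition of the empirical mean landscape, and the covariance structure of $V = \lambda(X) - E(\lambda(X))$ agrees with that of $\lambda(X)$ because covariance is translation-invariant --- for $f, g \in \B^*$ one has $E[(f(V) - E(f(V)))(g(V) - E(g(V)))] = E[(f(\lambda(X)) - E(f(\lambda(X))))(g(\lambda(X)) - E(g(\lambda(X))))]$, since subtracting the constant $E(\lambda(X))$ shifts both $f(\lambda(X))$ and its mean by the same amount. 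Hence $G(V)$ has the same covariance structure as $\lambda(X)$, and the theorem follows.

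I do not expect any serious obstacle here: the statement is essentially a specialization of the abstract CLT to the concrete Banach space $L^p(\S)$, and the only thing to be careful about is the bookkeeping around the Pettis integral and the elementary inequality $(a+b)^2 \leq 2a^2 + 2b^2$ used to propagate the second-moment bound. The one place worth a sentence of care is confirming that $\lambda(X)$ is indeed a Borel random variable with values in the \emph{separable} Banach space $L^p(\S)$ --- but this was already established in Section~\ref{sec:banach}, so it can be cited rather than reproved.
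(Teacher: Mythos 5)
Your proposal is correct and takes exactly the same route as the paper, whose entire proof is the single line ``Apply Theorem~\ref{thm:clt-banach} to $V=\lambda(X)-E(\lambda(X))$''; you have simply filled in the routine verifications (type 2, existence of the Pettis integral, the second-moment bound, translation-invariance of the covariance structure) that the paper leaves implicit.
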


\begin{proof}
  Apply Theorem~\ref{thm:clt-banach} to $V=\lambda(X)-E(\lambda(X))$.
\end{proof}

Next we apply a functional to the persistence landscapes to obtain a
real-valued random variable that satisfies the usual central limit theorem.

\begin{corollary} \label{cor:functional} 
  Assume $p\geq 2$,
  $E\norm{\Lambda}<\infty$ and $E(\norm{\Lambda}^2)<\infty$.  For any
  $f \in L^q(\S)$ with $\frac{1}{p} + \frac{1}{q} = 1$, let
\begin{equation} \label{eq:Y}
  Y = \int_{\S} f \Lambda = \norm{f\Lambda}_1.
\end{equation}
Then  
\begin{equation} \label{eq:clt}
  \sqrt{n}[\overline{Y}_n - E(Y)] \xto{d} N(0,\Var(Y)).
\end{equation}
where $d$ denotes convergence in distribution and $N(\mu,\sigma^2)$ is
the normal distribution with mean $\mu$ and variance $\sigma^2$.
\end{corollary}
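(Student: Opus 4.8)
The plan is to deduce this corollary from the Central Limit Theorem for persistence landscapes (Theorem~\ref{thm:clt}) by applying the continuous linear functional $\varphi_f: L^p(\S) \to \R$ given by $\varphi_f(g) = \int_\S f g$. First I would check that $\varphi_f$ is a well-defined element of $L^p(\S)^*$: since $f \in L^q(\S)$ with $\frac1p + \frac1q = 1$, Hölder's inequality gives $\abs{\varphi_f(g)} \leq \norm{f}_q \norm{g}_p$, so $\varphi_f$ is bounded and linear, and by the Riesz representation of the dual of $L^p$ for $1 \leq p < \infty$ this is exactly the pairing we want. In particular $Y = \varphi_f(\lambda(X))$ is a real-valued random variable, and the hypotheses $E\norm{\lambda(X)} < \infty$, $E(\norm{\lambda(X)}^2) < \infty$ combined with the bound $\abs{Y} \leq \norm{f}_q\norm{\lambda(X)}_p$ give $E\abs{Y} < \infty$ and $E(Y^2) < \infty$, so $\Var(Y)$ is finite and the right-hand side of \eqref{eq:clt} makes sense.

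Next I would identify the image of the vector-valued CLT under $\varphi_f$. Because $\varphi_f$ is linear, $\varphi_f(\overline{\lambda(X)}_n) = \overline{Y}_n$ and, using that $\varphi_f$ commutes with the Pettis integral (the defining property $E(\varphi_f(V)) = \varphi_f(E(V))$ stated just before the Pettis proposition), $\varphi_f(E(\lambda(X))) = E(Y)$. Hence
\begin{equation*}
  \varphi_f\bigl(\sqrt{n}[\overline{\lambda(X)}_n - E(\lambda(X))]\bigr) = \sqrt{n}[\overline{Y}_n - E(Y)].
\end{equation*}
Since $\varphi_f$ is continuous, it is in particular a bounded continuous map $L^p(\S) \to \R$, so weak convergence of $\sqrt{n}[\overline{\lambda(X)}_n - E(\lambda(X))]$ to the Gaussian $G = G(\lambda(X))$ (Theorem~\ref{thm:clt}) is preserved by $\varphi_f$ by the continuous mapping theorem: $\sqrt{n}[\overline{Y}_n - E(Y)]$ converges weakly, i.e.\ in distribution, to $\varphi_f(G)$.

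Finally I would identify the limit law $\varphi_f(G)$ as $N(0,\Var(Y))$. By the definition of a $\B$-valued Gaussian random variable, $\varphi_f(G)$ is a real Gaussian with mean zero, so it only remains to compute its variance. By the "same covariance structure" clause in Theorem~\ref{thm:clt}, $E[\varphi_f(G)^2] = E[(\varphi_f(\lambda(X)) - E\varphi_f(\lambda(X)))^2] = \Var(\varphi_f(\lambda(X))) = \Var(Y)$, which gives $\varphi_f(G) \sim N(0,\Var(Y))$ and completes the proof. I do not expect a serious obstacle here; the one point requiring a little care is making sure all the integrability bookkeeping lines up — in particular that $f \in L^q$ genuinely yields an element of $(L^p)^*$ (which fails in general only when $p = \infty$, excluded by hypothesis) and that finiteness of $E(\norm{\lambda(X)}^2)$ transfers to finiteness of $E(Y^2)$ so that $\Var(Y)$ is defined.
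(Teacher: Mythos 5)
Your proposal is correct and follows essentially the same route as the paper: both apply the continuous linear functional $g(h)=\int_{\S}fh$, which lies in $L^p(\S)^*$ by H\"older, to the Banach-space CLT of Theorem~\ref{thm:clt} and then identify the limiting variance as $E(g(V)^2)=\Var(Y)$; you simply spell out the details (Pettis integral, continuous mapping theorem) that the paper leaves implicit. One small wording slip: a nonzero continuous linear functional is \emph{not} a bounded function on $L^p(\S)$, so you cannot literally test it against the definition of weak convergence --- but since the continuous mapping theorem for weak convergence requires only continuity, your invocation of it carries the argument through correctly.
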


\begin{proof}
  Since $V=\Lambda-E(\Lambda)$ satisfies the central limit
  theorem in $L^p(\S)$, for any $g \in L^p(\S)^*$, the real random
  variable $g(V)$ satisfies the central limit theorem in $\R$ with
  limiting Gaussian law with mean 0 and variance $E(g(V)^2)$. If we take $g(h) =
  \int_{\S}  fh$, where $f \in L^q(\S)$, with
  $\frac{1}{p}+\frac{1}{q}=1$, then $g(V)=Y-E(Y)$ and $E(g(V)^2) = \Var(Y)$.
\end{proof}

\subsection{Confidence Intervals}
\label{sec:confidence-interval}

The results of Section~\ref{sec:main-thms} allow us to obtain
approximate confidence intervals for the expected values of functionals on
persistence landscapes.

Assume that $\lambda(X)$ satisfies the conditions of
Corollary~\ref{cor:functional} and that $Y$ is a corresponding real
random variable as defined in~\eqref{eq:Y}.  By
Corollary~\ref{cor:functional} and Slutsky's theorem we may use
the normal distribution to obtain the approximate $(1-\alpha)$ confidence interval for
$E(Y)$ using
\begin{equation*}
  \overline{Y}_n \pm z^* \frac{S_n}{\sqrt{n}},
\end{equation*}
where $S_n^2 = \frac{1}{n-1}\sum_{i=1}^n(Y_i-\overline{Y}_n)^2$, and
$z^*$ is the upper $\frac{\alpha}{2}$ critical
value for the normal distribution.

\subsection{Statistical Inference using Landscapes I}
\label{sec:inference}

Here we apply the results of Section~\ref{sec:main-thms} to
hypothesis testing using persistence landscapes.

Let $X_1,\ldots,X_n$ be an iid copies of the random variable $X$ and
let $X'_1,\ldots,X'_{n'}$ be an iid copies of the random variable
$X'$.
Assume that the corresponding persistence landscapes
$\Lambda$, $\Lambda'$ lie in $L^p(\S)$, where $p\geq 2$. 
Let $f \in L^q(\S)$, where $\frac{1}{p}+\frac{1}{q}=1$.
Let $Y$ and $Y'$ be defined as in~\eqref{eq:Y}.
Let $\mu = E(Y)$ and $\mu'=E(Y')$.
We will test the null hypothesis that $\mu=\mu'$.
First we recall that the sample mean $\overline{Y} =
\frac{1}{n}\sum_{i=1}^n Y_i$ is an unbiased estimator of $\mu$ and the
sample variance $s_Y^2 =
\frac{1}{n-1}\sum_{i=1}^n(Y_i-\overline{Y})^2$ is an unbiased
estimator of $\Var(Y)$ and similarly for $\overline{Y'}$ and $s_{Y'}^2$.
By Corollary~\ref{cor:functional}, $Y$ and $Y'$ are asymptotically normal.

We use
the two-sample z-test.
Let 
\begin{equation*}
  z = \frac{ \overline{Y} - \overline{Y'} }{
    \sqrt{\frac{S_Y^2}{n}+\frac{S_{Y'}^2}{n'}}},
\end{equation*}
where the denominator is the standard error for the difference.
From this standard score a p-value may be obtained from the normal distribution.

\subsection{Choosing a Functional}
\label{sec:functional}

To apply the above results, one needs to choose a functional, $f \in
L^q(\S)$. This choice will need to be made with an understanding of
the data at hand. Here we present a couple of options.

If each $\lambda = \Lambda(\omega)$ is supported by
$\{1,\ldots,K\}\times [-B,B]$,
take
\begin{equation} \label{eq:f}
  f(k,t) =
  \begin{cases}
    1 &\text{ if }t \in [-B,B] \text{ and } k \leq K\\
    0 &\text{ otherwise.}
  \end{cases}
\end{equation}
Then
$\norm{f\Lambda}_1 = \norm{\Lambda}_1$.

If the parameter values for which the
persistence landscape is nonzero are bounded by $\pm B$, then
we have a nice choice of functional for the persistence landscape
that is unavailable for the (rescaled) rank function. We can choose a
functional that is sensitive of the first $K$ dominant homological
features.  That is, using $f$ in \eqref{eq:f},
$\norm{f\lambda}_1 = \sum_{k=1}^K \norm{\Lambda_k}_1$.
Under this weaker assumption we can also take $f_k(t) = \frac{1}{k^r}
  \chi_{[-B,B]}$, where $r >1$. Then $\norm{f\Lambda}_1 =
  \sum_{k=1}^{\infty} \frac{1}{k^r}\norm{\Lambda_k(t)}_1$.


The condition that $\lambda$ is supported by $\N \times [-B,B]$ can
often be enforced by using reduced
homology or by applying extended 
persistence \citep{cseh:extendingP,bubenikScott:1}
or by simply truncating the intervals in the corresponding barcode
at some fixed values. 
We remark that certain experimental data may have bounds on the number
of intervals.  For example, in the protein data considered using the
ideas presented here in \citet{giseon:maltose}, the simplicial
complexes have a fixed number of vertices.

\subsection{Statistical Inference using Landscapes II}
\label{sec:inference2}

The functionals suggested in
Section~\ref{sec:functional} in the hypothesis test given in
Section~\ref{sec:inference} may not have enough power to
discriminate between two groups with different persistence in some
examples.

To increase the power, one can apply a vector of functionals and then apply
Hotelling's $T^2$ test. For example, consider $Y =
(\int(\Lambda_1-\Lambda'_1),\ldots,\int(\Lambda_K-\Lambda'_K))$, where
$K\ll n_1+n_2-2$.

This alternative will not be sufficient if the persistence landscapes
are translates of each other, 
(see Figure~\ref{fig:noisy}).
An additional approach is to compute the distance between
the mean landscapes of the two groups and obtain a p-value using a
permutation test. This is done in 
the Section~\ref{sec:torus-sphere}. 
This test has been applied to persistence diagrams and
barcodes \citep{cbk:ipmi2009,robinsonTurner:2013}.

\section{Examples}
\label{sec:examples}

The persistent homologies in this section were calculated using
javaPlex \citep{javaplex} and Perseus by \citet{perseus}. Another publicly
available alternative is Dionysus by \citet{dionysus}.
In Section~\ref{sec:grf} we use Matlab code courtesy of Eliran Subag that
implements an algorithm from \citet{wood-chan:simulation}.

\subsection{Linked Annuli}
\label{sec:annuli}

\begin{figure}
\begin{center}
\includegraphics{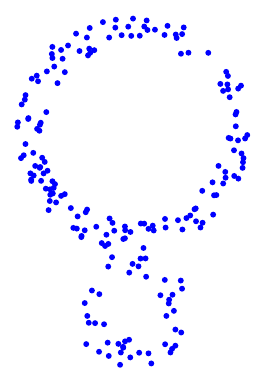}
\includegraphics{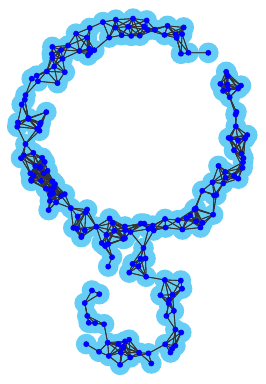}

\includegraphics{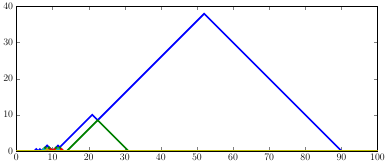}
\includegraphics{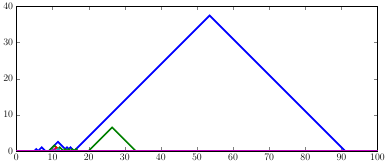}

\includegraphics{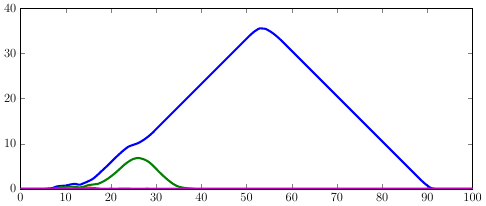}
\end{center}

\caption{200 points were sampled from a pair of linked annuli. Here we
  show the points and a corresponding union of balls and
  1-skeleton of the \v Cech complex. This was repeated 100 times. Next
  we show two of the degree one persistence landscapes and the mean
  degree one persistence landscape.}
\label{fig:annuli}
\end{figure}

We start with a simple example to illustrate the techniques.
Following \citet{munch:probabilistic-f}, we sample 200
points from the uniform distribution on the union of two annuli. We
then calculate the corresponding persistence landscape in degree one
using the Vietoris-Rips complex. We repeat this 100 times and
calculate the mean persistence landscape. See
Figure~\ref{fig:annuli}.

Note that in the degree one barcode of this example, it is very likely
that there will be one large interval, one smaller interval born at
around the same time, and all other intervals are smaller and die
around the time the larger two intervals are born.

\subsection{Random geometric complexes}
\label{sec:points}

The (non-persistent) homology of random geometric complexes has been
studied in \cite{kahle:randomGeometricComplexes,Kahle-Meckes:2013,bobrowskiAdler:distanceFunctions}.

We sample 100 points from the uniform distribution on the unit cube
$[0,1]^3$, and calculate the persistence landscapes in degrees 0, 1
and 2 of the corresponding Vietoris-Rips complex. In degree 0, we use
reduced homology. We repeat this 1000
times and calculate the corresponding mean persistence
landscapes. See Figure~\ref{fig:points}.

Since the number of simplices is bounded and the filtration is
bounded, these persistence landscapes have finite support.  As
discussed in Section~\ref{sec:functional}, we can choose the
functional given by the indicator function on this support.  We obtain
the real random variable $Y = \norm{\lambda(X)}_1 = \frac{1}{4}
\pers_2(D(X))$, where $D(X)$ is the persistence diagram corresponding
to $\lambda(X)$.  Following Section~\ref{sec:confidence-interval} we
calculate the approximate 95\% confidence intervals of $E(Y)$ in
degrees 0, 1 and 2 to be $[0.1534,0.1545]$, $[0.0064,0.0066]$ and
$[0.0002,0.0003]$.

\begin{remark}
The graphs in Figure~\ref{fig:points} may be thought of as a
persistent homology version of the graph in Figure 2 of~\cite{Kahle-Meckes:2013}.
\end{remark}

\begin{figure}
  \centering

\includegraphics[width=5cm]{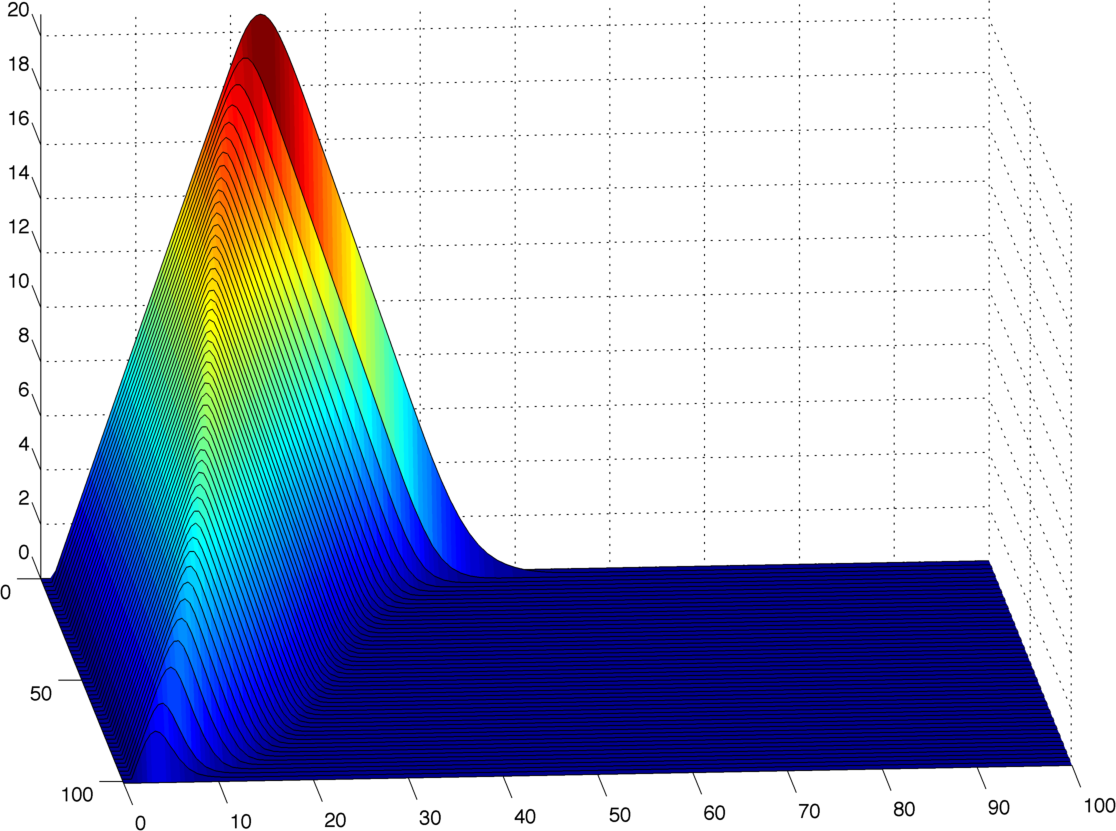}
\includegraphics[width=5cm]{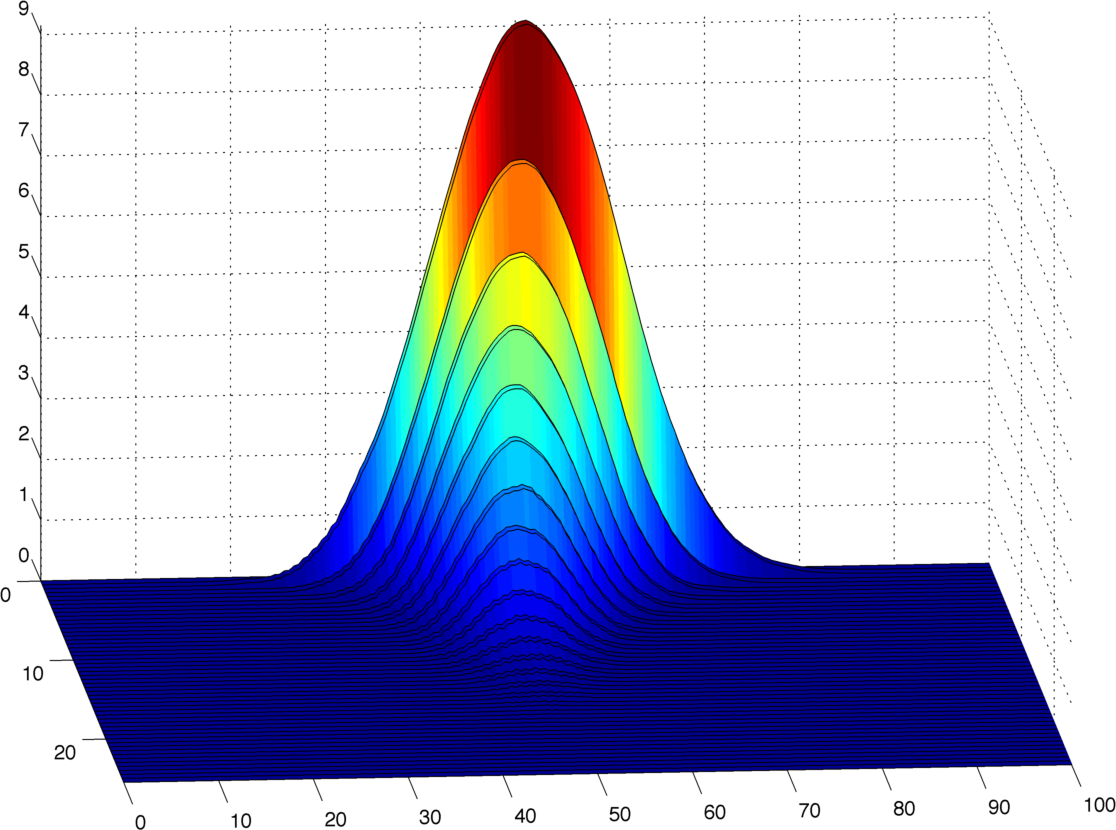}

\includegraphics[width=5cm]{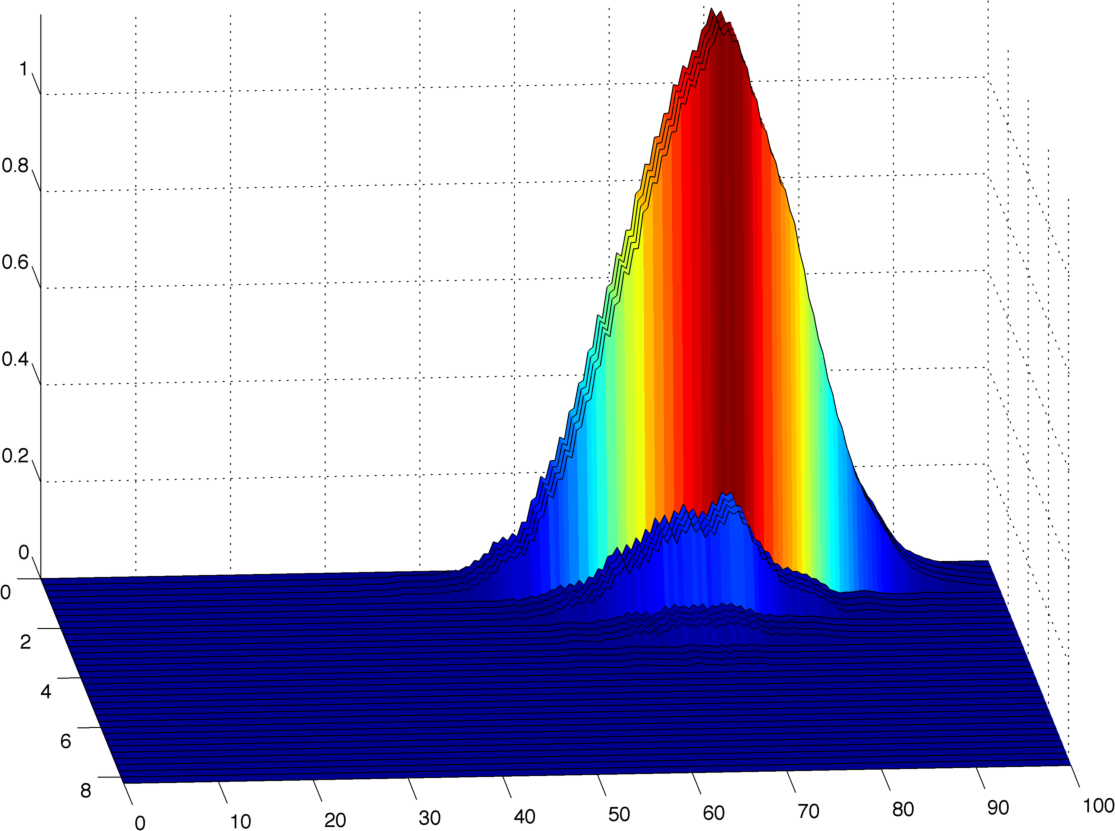}
\includegraphics[width=5cm]{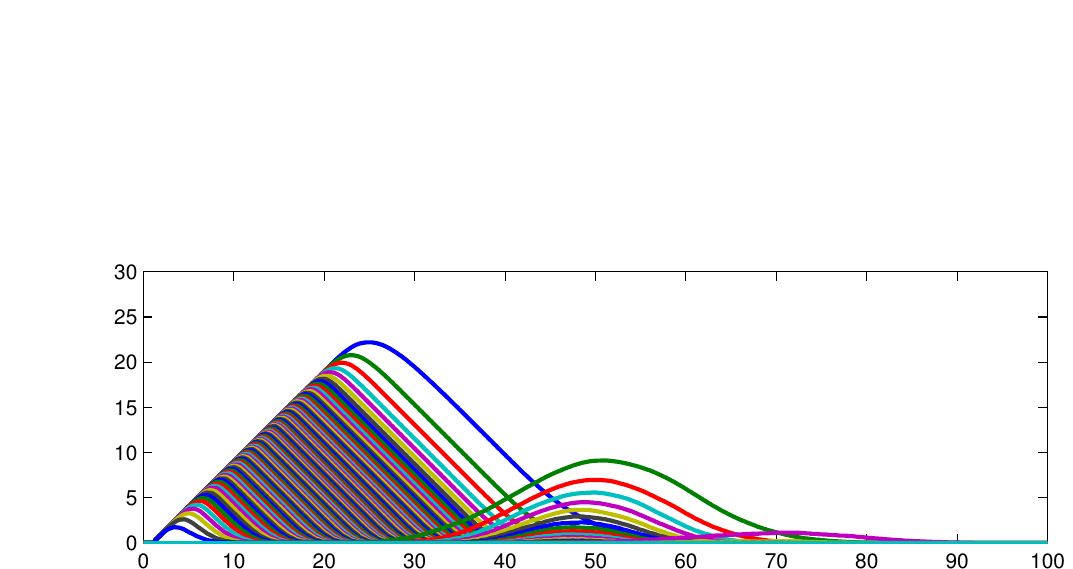}

\caption{The mean persistence landscapes of 1000 Vietoris-Rips
  complexes of 100 points sampled uniformly from the cube
  $[0,1]^3$. The top left, top right and bottom left are homology in
  degrees 0, 1 and 2. The bottom right is the superposition of all
  three. Note that the filtration parameter has been rescaled, so 100
  in the width and height of the graphs equals 0.3.}

  \label{fig:points}
\end{figure}

\subsection{Erd\"os-R\'enyi random clique complexes}
\label{sec:clique}

The (non-persistent) homology of the random complexes in this section
has been studied~\cite{Kahle-Meckes:2013}.

Let $G(n)$ be the following random filtered graph. There are $n$
vertices with filtration value $0$. Each of the possible $n \choose 2$
edges has a filtration value which is chosen independently from a uniform
distribution on $[0,1]$.
Let $X(n)$ be the clique complex of $G(n)$.
In Figure~\ref{fig:clique} we show the mean persistence landscapes
of a sample of 10 independent copies of $G(100)$ in degrees 0, 1, 2,
and 3, where in degree 0 we use reduced homology.
For computational reasons, we only considered the subcomplex of
$G(100)$ with filtration values at most 0.55.

The graphs in Figure~\ref{fig:clique} are a persistent homology
version of Figure 1 in~\cite{Kahle-Meckes:2013}. 
In fact the latter graphs
are given by the support of the graphs in Figure~\ref{fig:clique}.

As in the example in Section~\ref{sec:points}, we let $Y =
\norm{\lambda(X)}_1 = \frac{1}{4}\pers_2(D(X))$. The approximate 95\%
confidence intervals of $E(Y)$ in degrees 0, 1, 2 and 3 are estimated
to be $[0.0034,0.0039]$, $[0.751,0.777]$, $[1.971,2.041]$ and
$[2.591,2.618]$.

\begin{figure}
  \centering
  
\includegraphics[width=5cm]{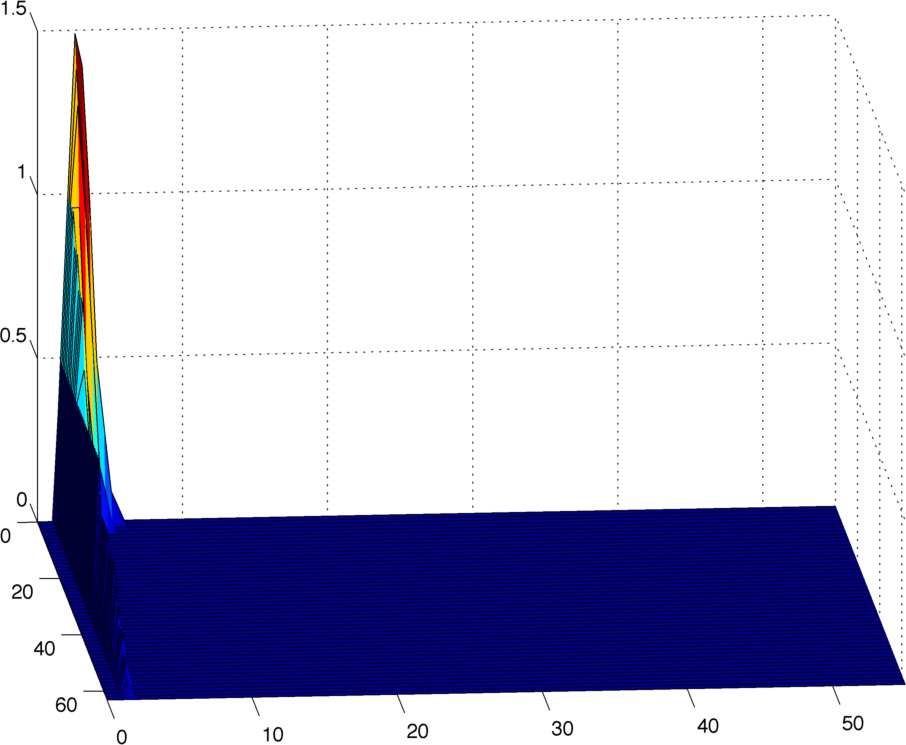}
\includegraphics[width=5cm]{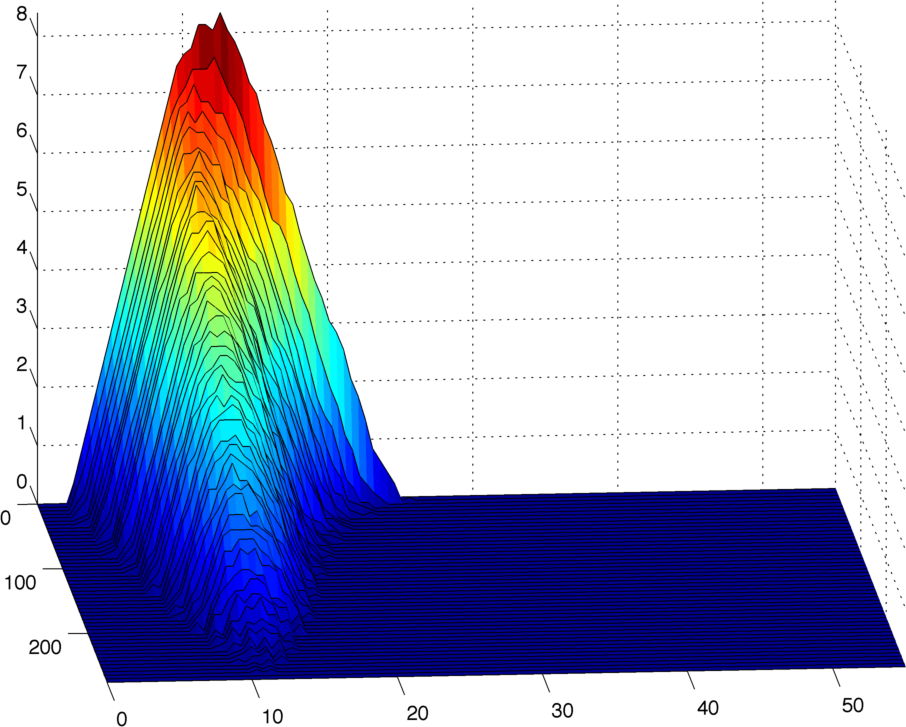}

\includegraphics[width=5cm]{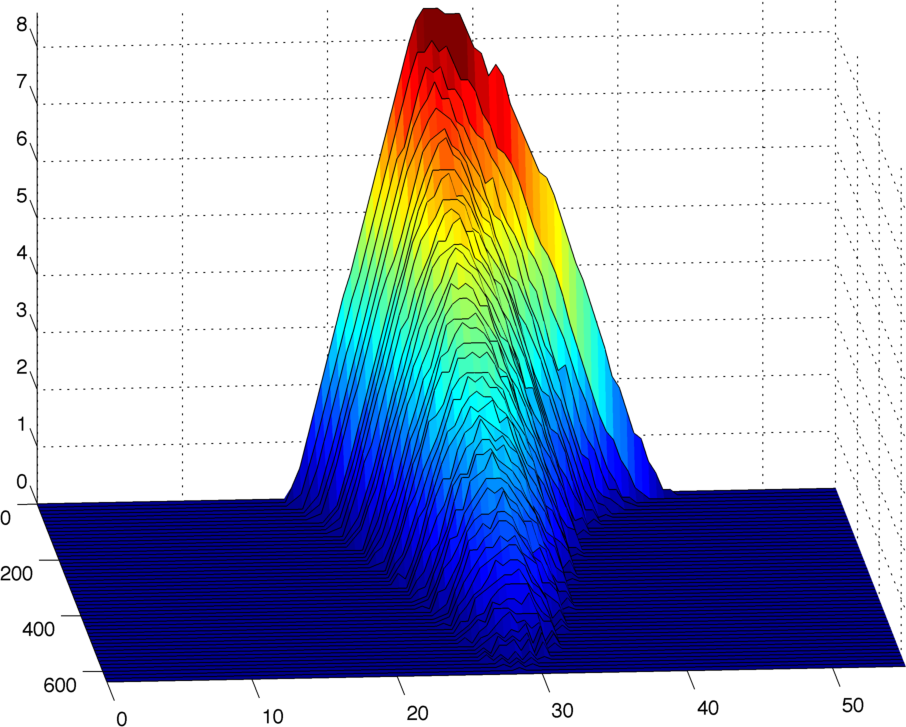}
\includegraphics[width=5cm]{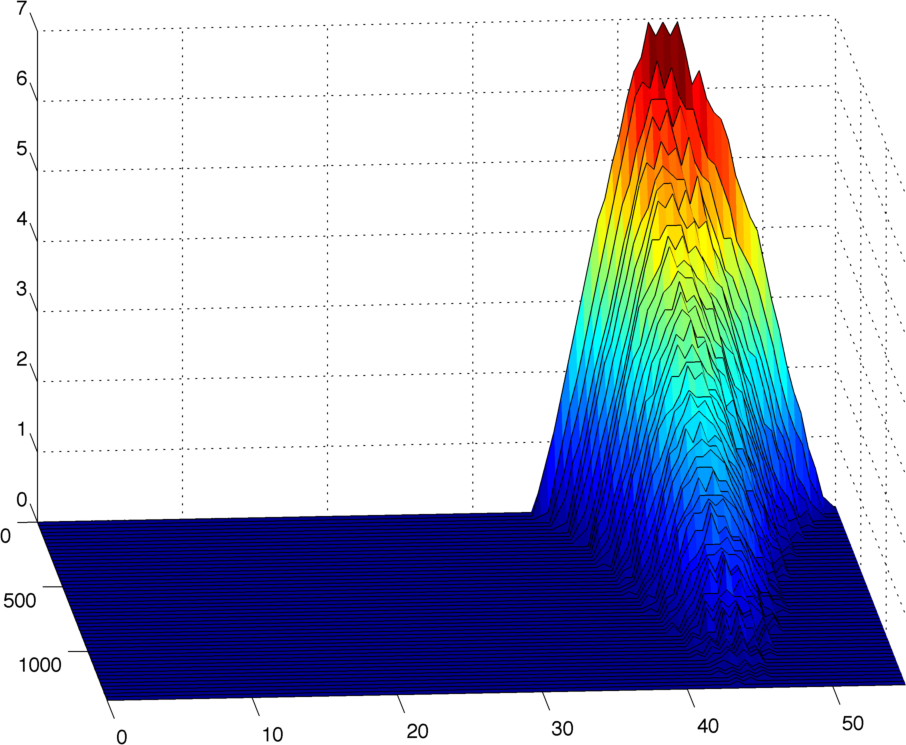}

\caption{The mean persistence landscapes in degrees 0--3 from 10
  copies of the random clique complex $G(n)$. Note that we have
  rescaled the filtration by a factor of 100.}
\label{fig:clique}
\end{figure}

\subsection{Gaussian Random Fields}
\label{sec:grf}

The topology of Gaussian random fields is of interest in statistics. 
The Euler characteristic of superlevel sets of a Gaussian random
field may be calculated using the Gaussian Kinematic
Formula of \citet{adlerTaylor:book}.
The persistent homology of Gaussian random fields has been
considered by \citet{abbsw:phrfc} and its expected Euler characteristic
has been obtained by \citet{bobrowskiBorman}.

Here we consider a stationary Gaussian random field on $[0,1]^2$ with
autocovariance function $\gamma(x,y) = e^{-400(x^2+y^2)}$. See Figure~\ref{fig:grf}.
We sample this field on a 100 by 100 grid, and calculate the
persistence landscape of the sublevel set. 
For homology in degree 0, we truncate the infinite interval at the
maximum value of the field.
We calculate the mean persistence landscapes in degrees 0 and 1
from 100 samples (see Figure~\ref{fig:grf}, where we have rescaled the
filtration by a factor of 100). 



In the Gaussian random field literature, it is more common to consider
superlevel sets. However, by symmetry, the expected persistence
landscape in this case is the same except for a change in the sign of the
filtration. 

We repeat this calculation for a similar Gaussian random field on
$[0,1]^3$, this time using reduced homology. See
Figure~\ref{fig:grf}. This time we sample on a $25 \times 25 \times
25$ grid.  

\begin{figure}
  \centering
  
  \includegraphics[width=7cm]{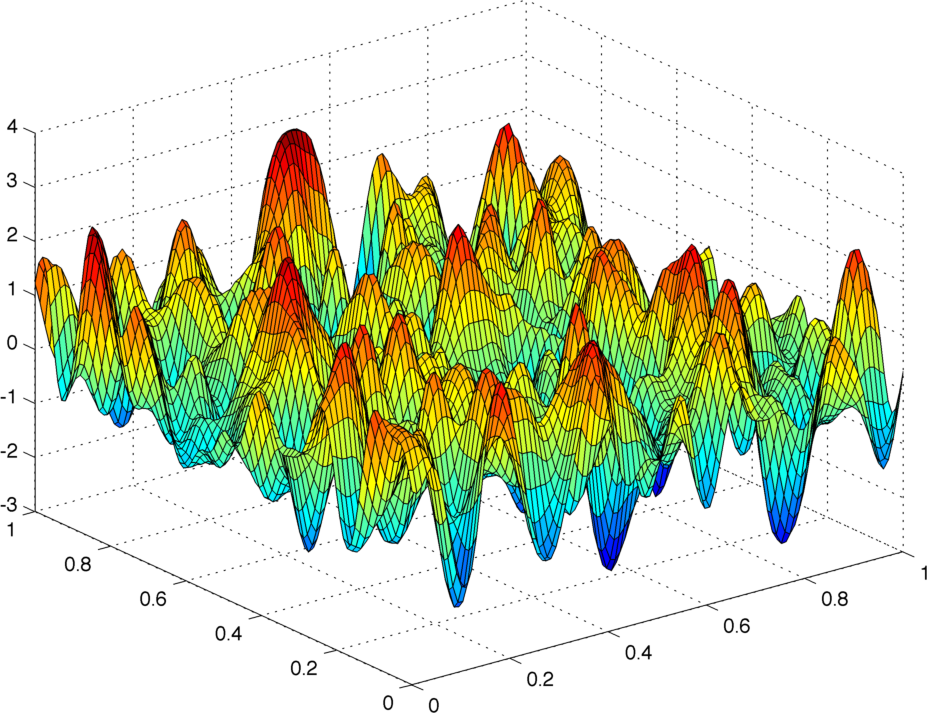}
  \includegraphics[width=5cm]{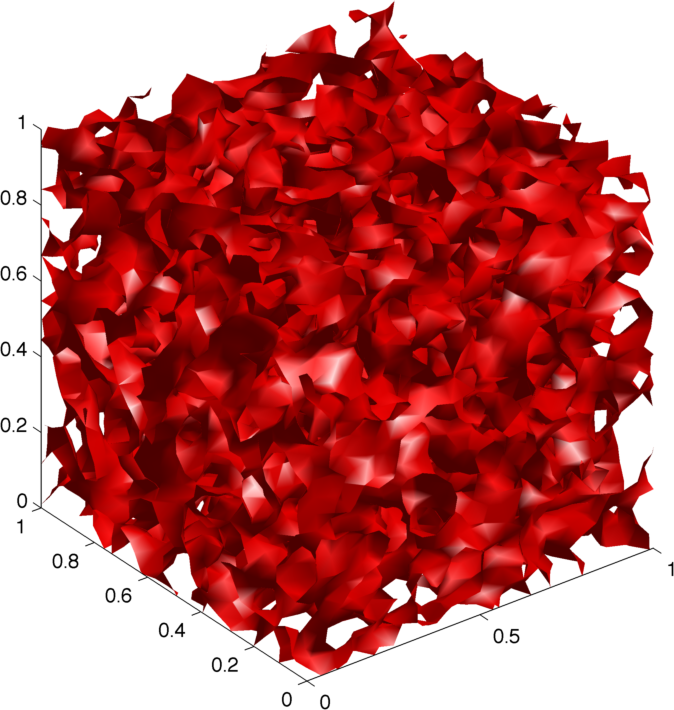}

  \includegraphics[width=4cm]{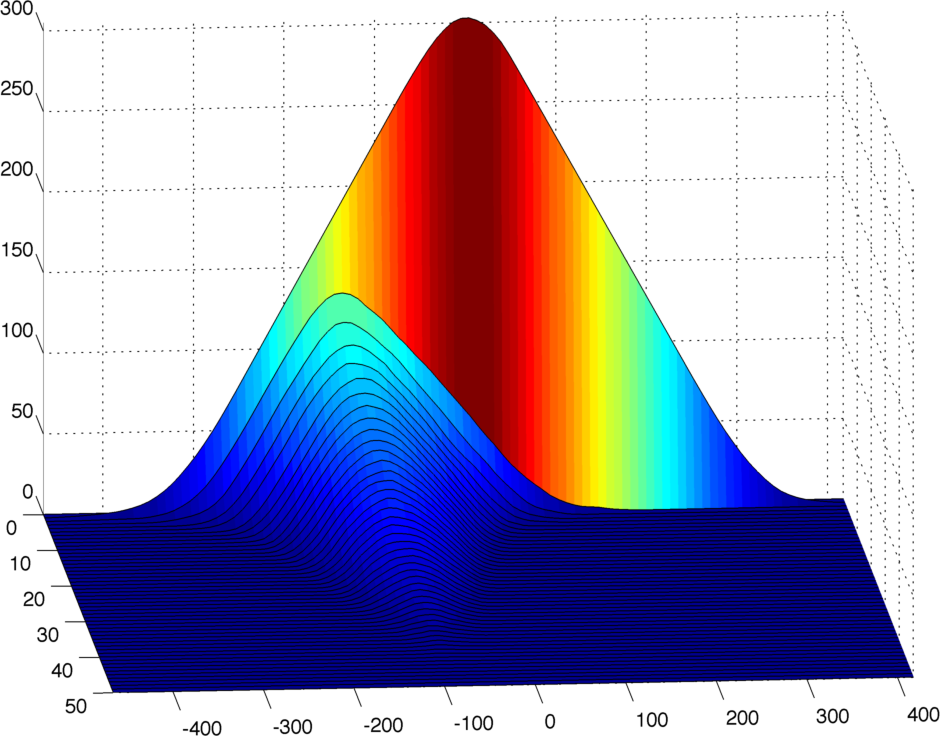}
  \includegraphics[width=4cm]{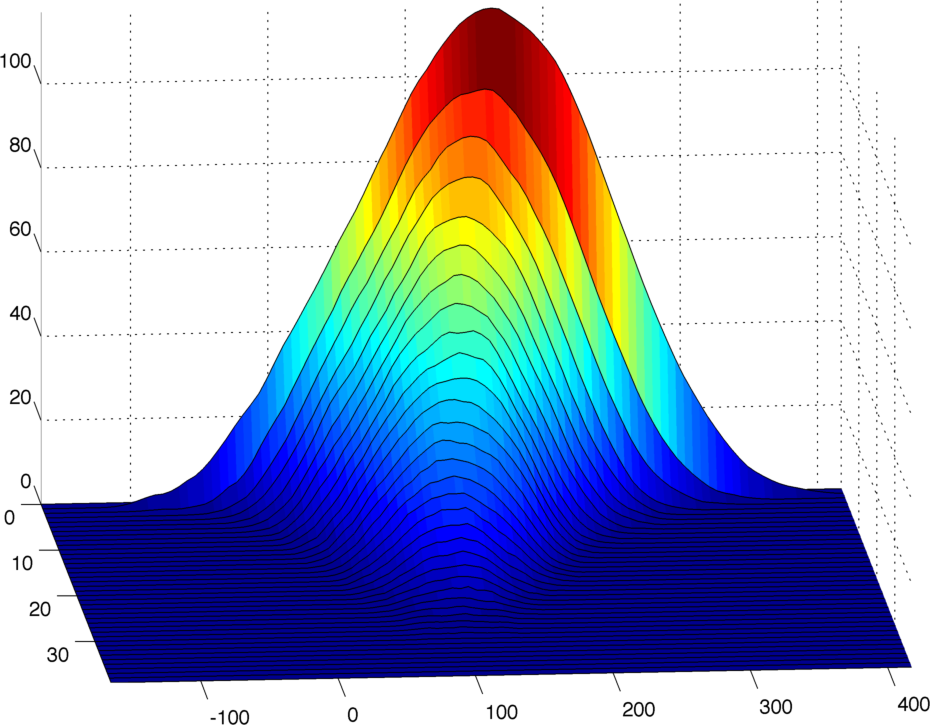}

  \includegraphics[width=4cm]{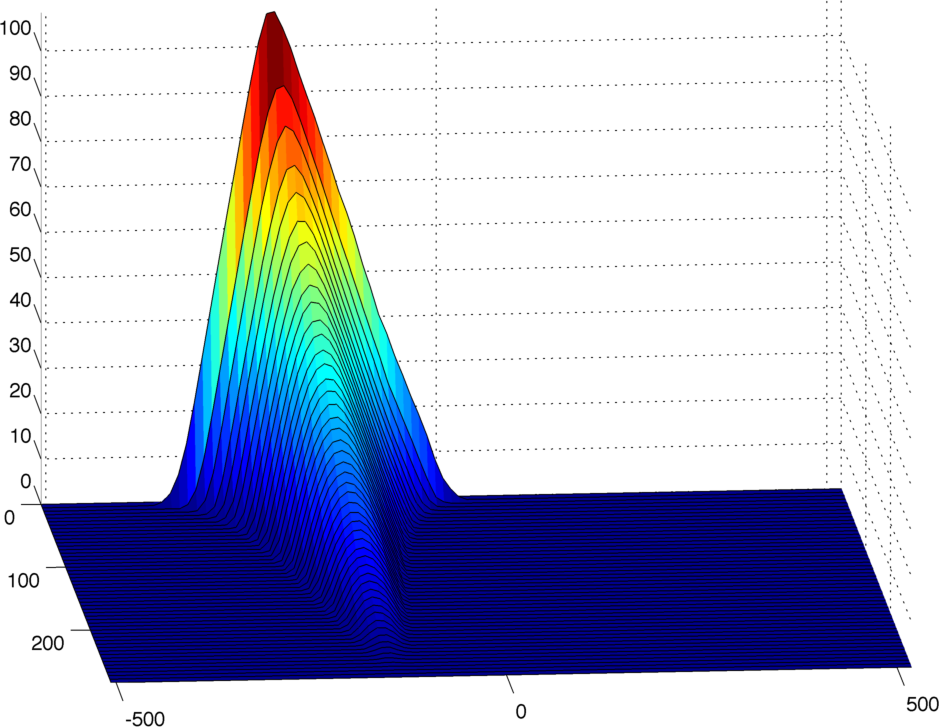}
  \includegraphics[width=4cm]{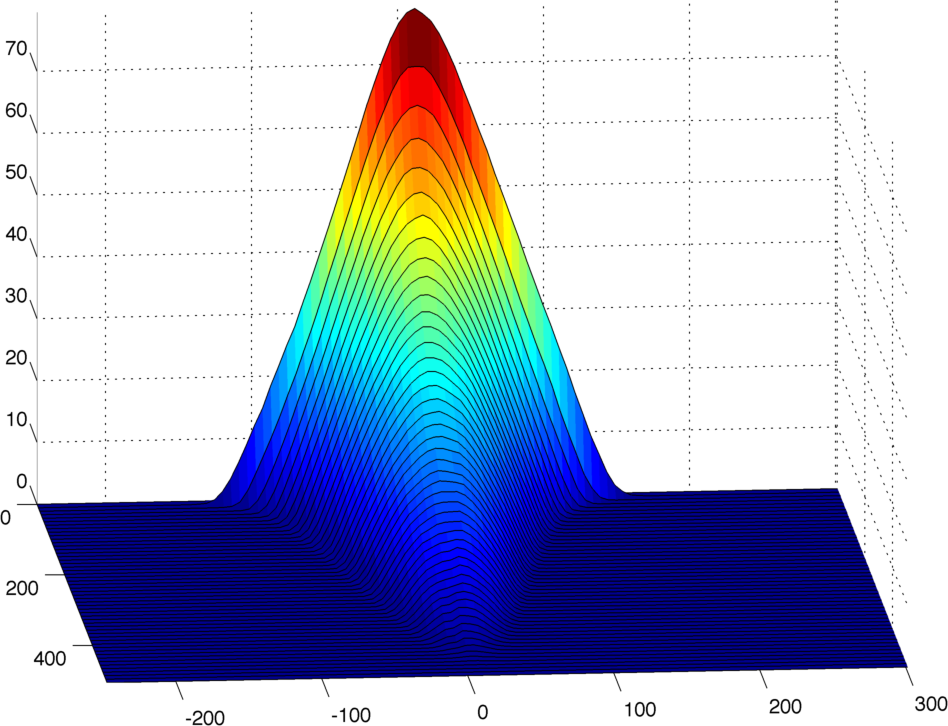}
  \includegraphics[width=4cm]{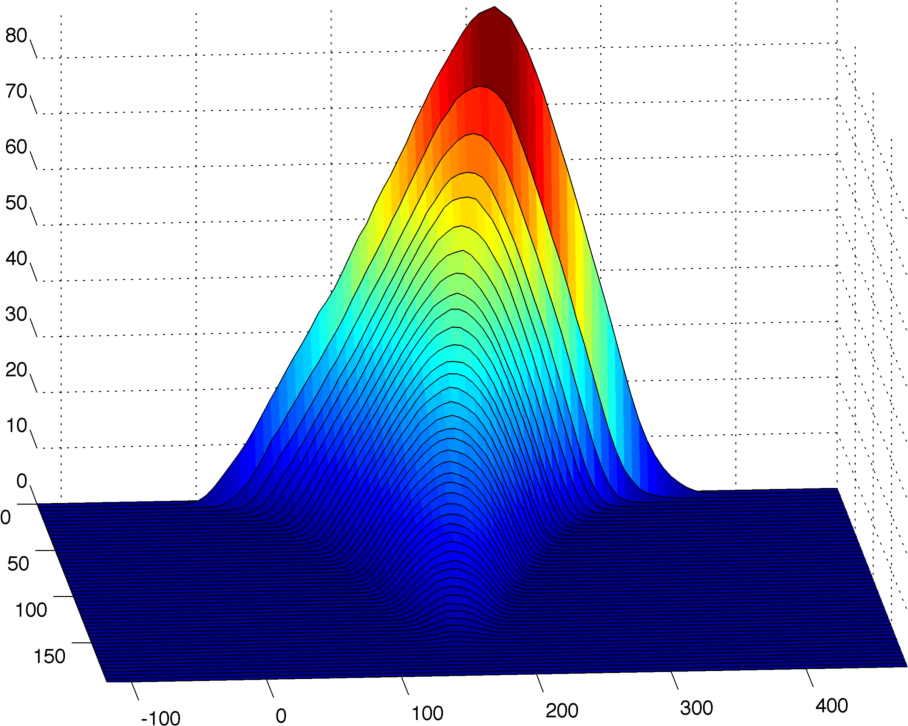}
  \caption{Mean landscapes of Gaussian random fields. The graph of a
    Gaussian random field on $[0,1]^2$ (top left) and its
    corresponding mean landscapes (middle row) in degrees
    0 and 1. The 0-isosurface of a Gaussian random field on $[0,1]^3$
    (top right) and the corresponding mean landscapes in
    degrees 0, 1 and 2 (bottom row).  }
  \label{fig:grf}
\end{figure}

\subsection{Torus and Sphere}
\label{sec:torus-sphere}

Here we combine persistence landscapes and statistical inference to
discriminate between iid samples of 1000 points from a torus and a
sphere in $\R^3$ with the same surface area, using the uniform surface
area measure as described by \citet{dhs:sampling} 
(see Figure~\ref{fig:torus-and-sphere}).  To be precise, we use the torus
given by $(r-2)^2+z^2=1$ in cylindrical coordinates, and the sphere
given by $r^2=2\pi$ in spherical coordinates.

\begin{figure} 
\begin{center}
\includegraphics[width=4cm]{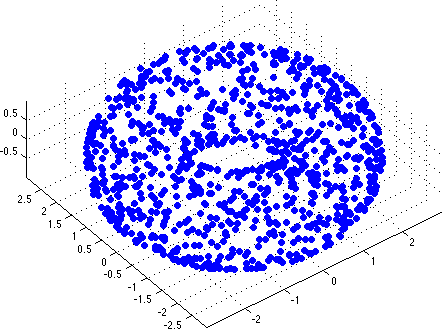}
\includegraphics[width=4cm]{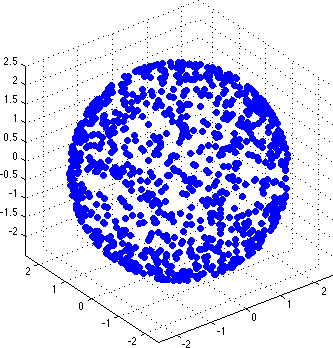}
\end{center}

\begin{center}
  \includegraphics[width=35mm]{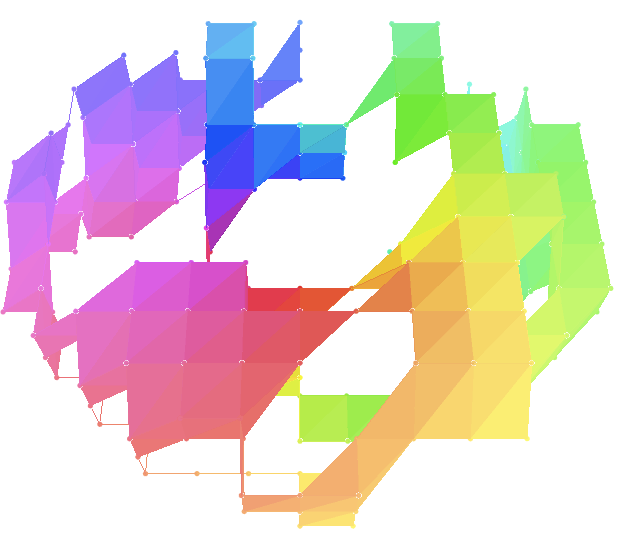}
  \includegraphics[width=35mm]{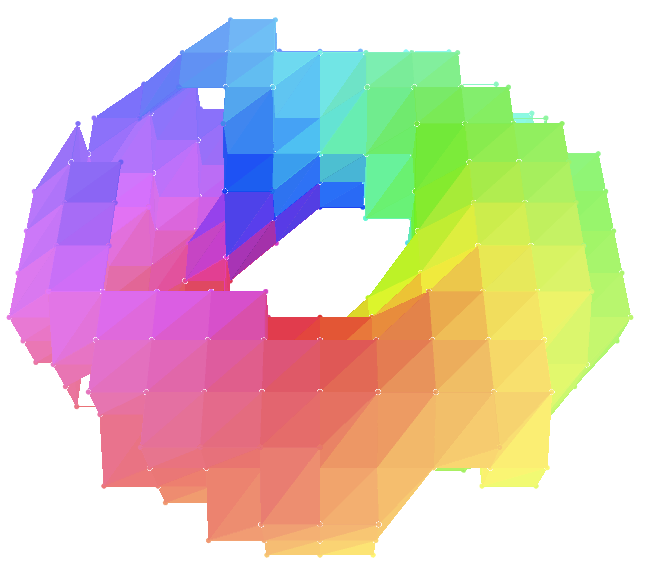}
  \includegraphics[width=35mm]{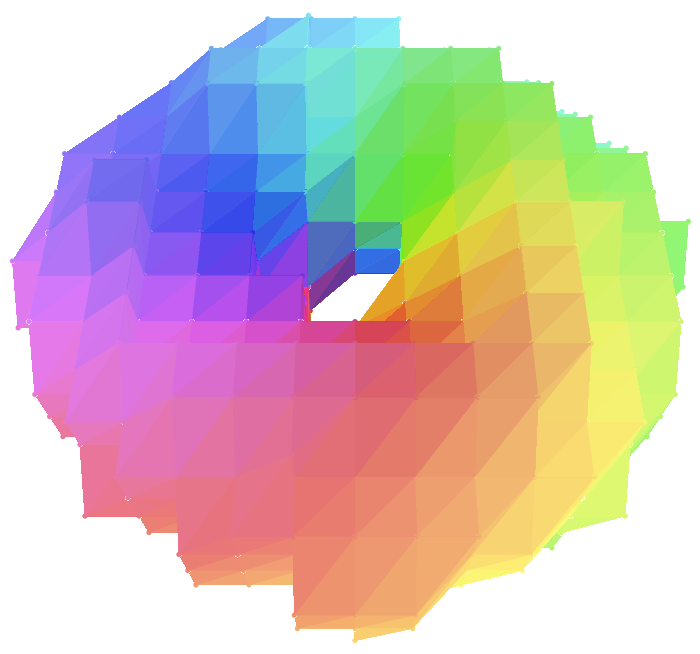}
\end{center}
\begin{center}
\includegraphics[width=4cm]{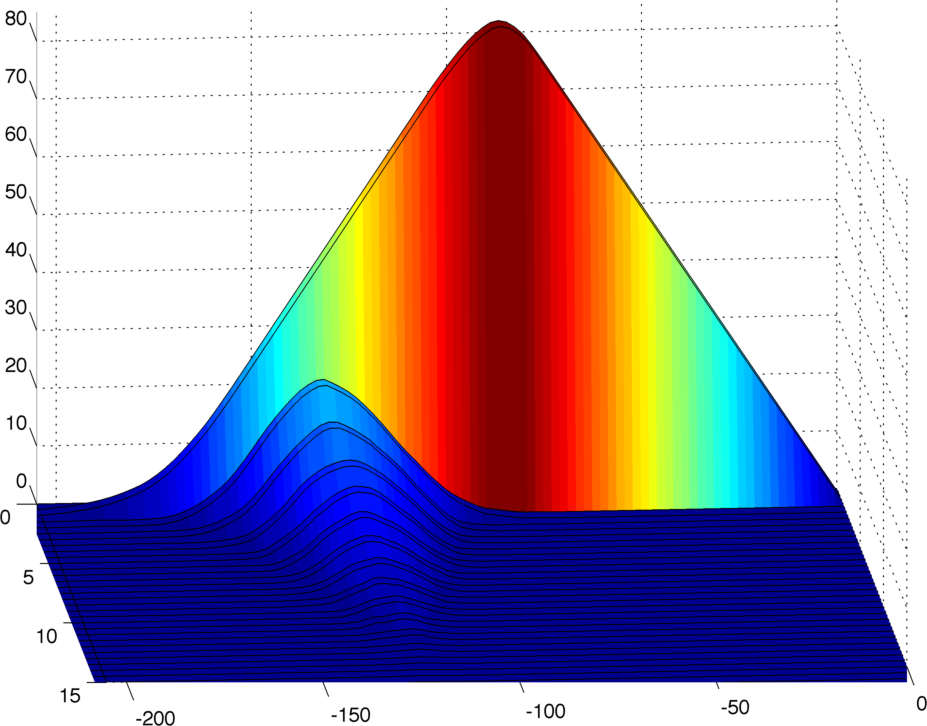}
\includegraphics[width=4cm]{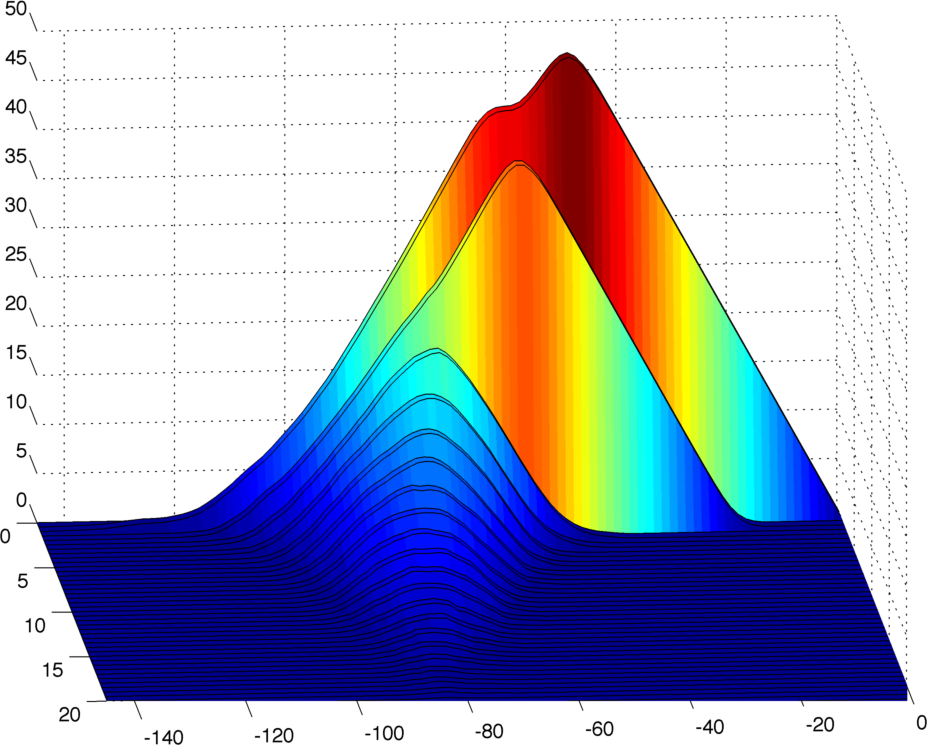}
\includegraphics[width=4cm]{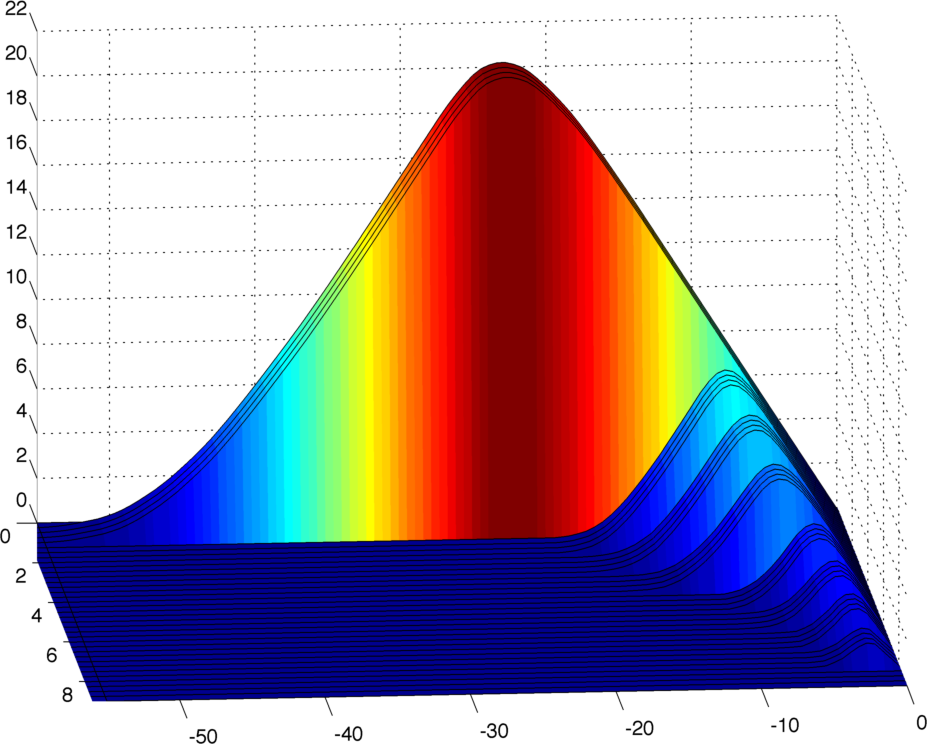}

\includegraphics[width=4cm]{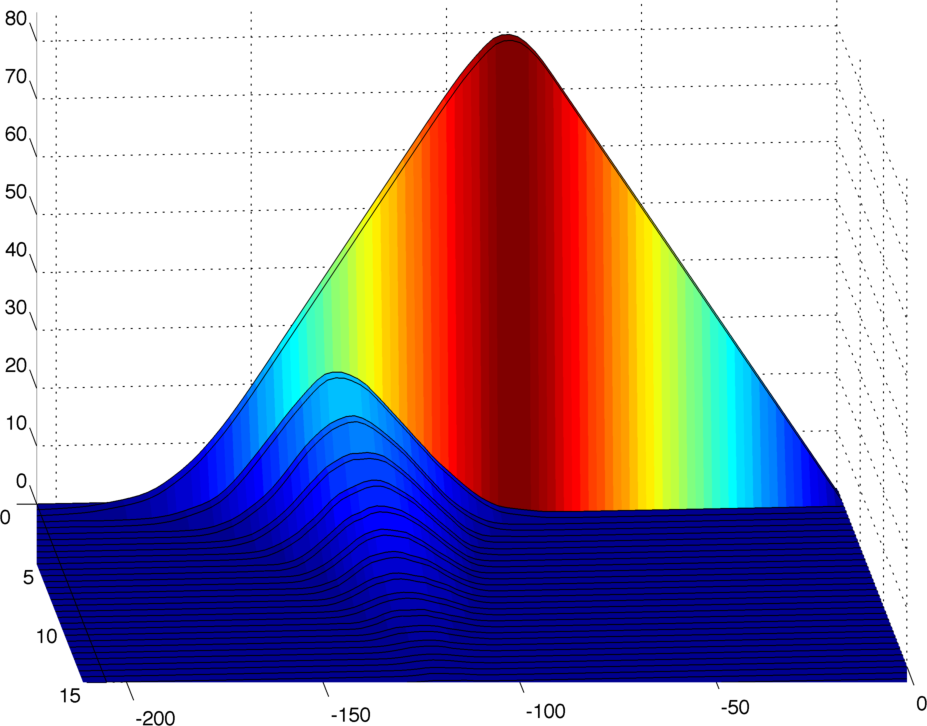}
\includegraphics[width=4cm]{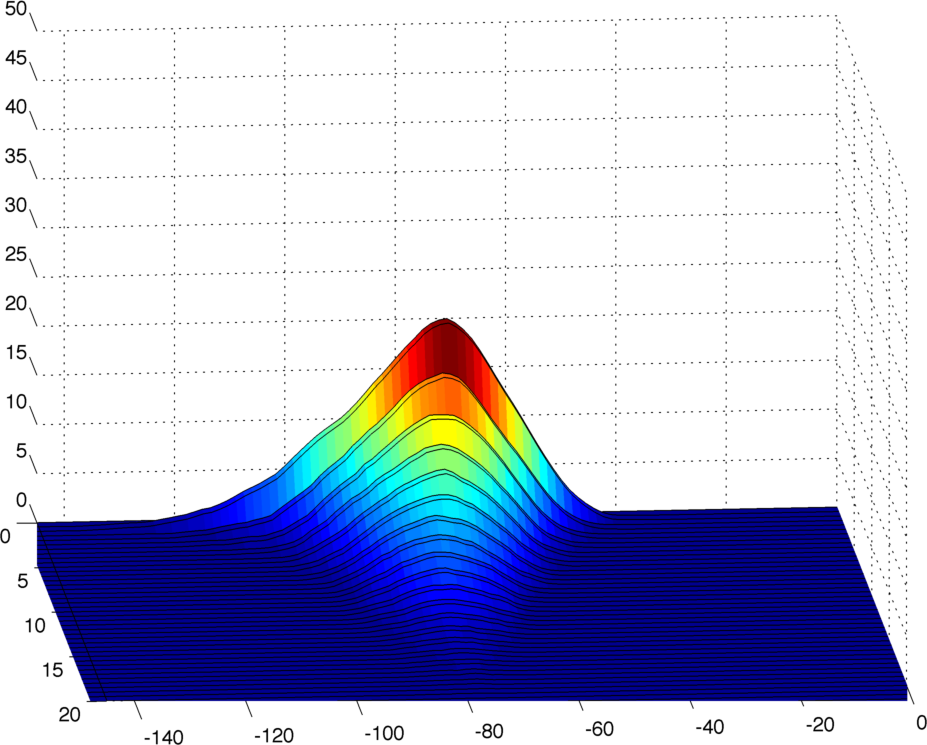}
\includegraphics[width=4cm]{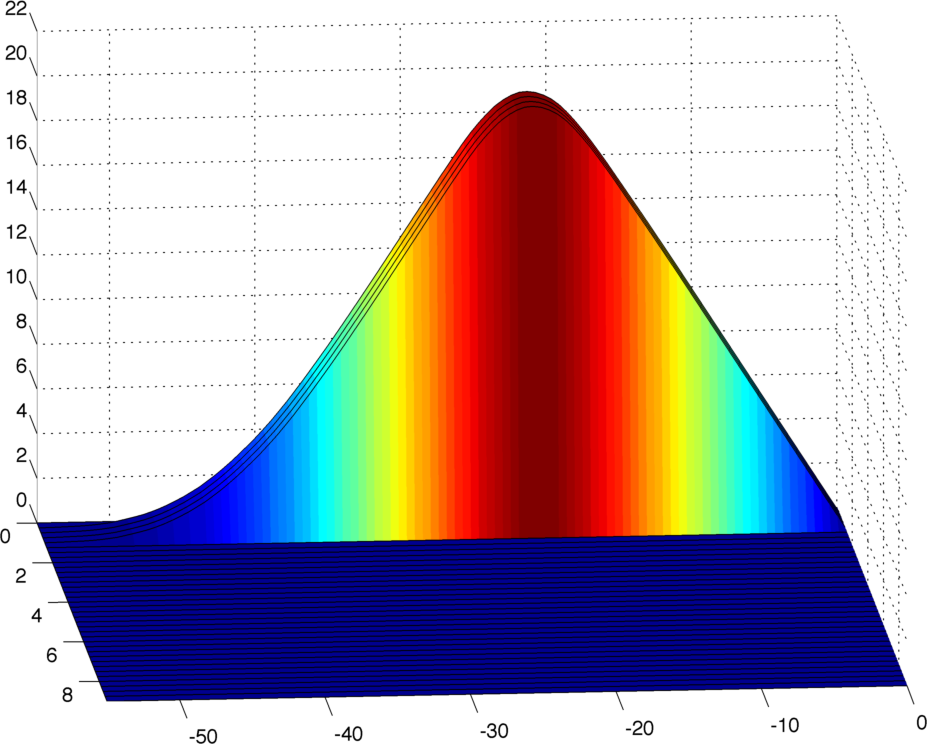}
\end{center}
\caption{We sample 1000 points for a torus and sphere, 100 times each,
  construct the corresponding filtered simplicial complexes and
  calculate persistent homology.  In columns 1, 2 and 3, we have the
  mean persistence landscape in dimension 0, 1 and 2 of the torus in
  row 3 and the sphere in row 4.  }
\label{fig:torus-and-sphere}
\end{figure}

For these points, we construct a filtered simplicial complex as follows. 
First we triangulate the underlying space using the
Coxeter--Freudenthal--Kuhn triangulation, starting with a cubical grid
with sides of length $\frac{1}{2}$.
Next we smooth our data using a triangular kernel with bandwidth 0.9.
We evaluate this kernel density estimator at the vertices of our simplicial complex.
Finally, we filter our simplicial complex as follows.
For filtration level $-r$, we include a simplex in our
triangulation if and only if the kernel density estimator has values
greater than or equal to $r$ at all of its vertices.
Three stages in the filtration for one of the samples are shown in 
(see Figure~\ref{fig:torus-and-sphere}).
We then calculate the persistence landscape of this filtered
simplicial complex for 100 samples and plot the mean landscapes
(see Figure~\ref{fig:torus-and-sphere}). We observe that the large peaks
correspond to the Betti numbers of the torus and sphere.
    

Since the support of the persistence landscapes is bounded, we can use
the integral of the landscapes to obtain a real valued random
variable that satisfies \eqref{eq:clt}. 
We use a two-sample z-test to test the null hypothesis
that these random variables have equal mean. 
For the landscapes in dimensions 0 and 2 we cannot reject the null
hypothesis.
In dimension 1 we do reject the null hypothesis with a p-value of $3
\times 10^{-6}$.

We can also choose a functional that only integrates the persistence
landscape $\lambda(k,t)$ for certain ranges of $k$.
In dimension 1, with $k=1$ or $k=2$ there is a statistically
significant difference (p-values of $10^{-8}$ and $3 \times 10^{-6}$),
but not for $k>2$.
In dimension 2, there is not a significant difference for $k=1$, but
there is a significant difference for $k>1$ (p-value $<10^{-4}$).



Now we increase the difficulty by adding a fair amount of Gaussian
noise to the point samples
(see Figure~\ref{fig:noisy}) and using only 10 samples for each surface.
\begin{figure}
\begin{center}
\includegraphics[width=4cm]{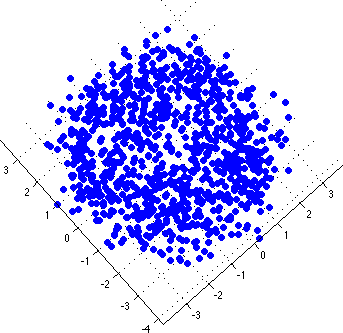}
\includegraphics[width=4cm]{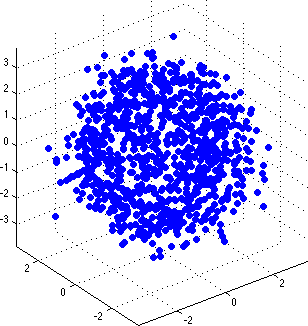}
  \includegraphics[width=4cm]{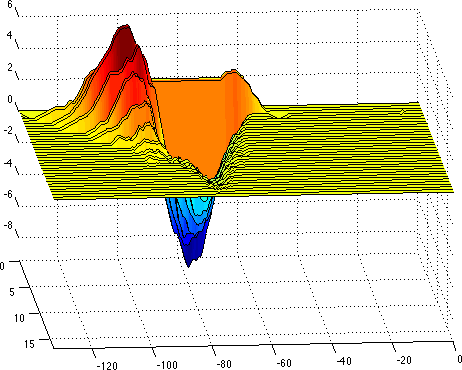}

\includegraphics[width=4cm]{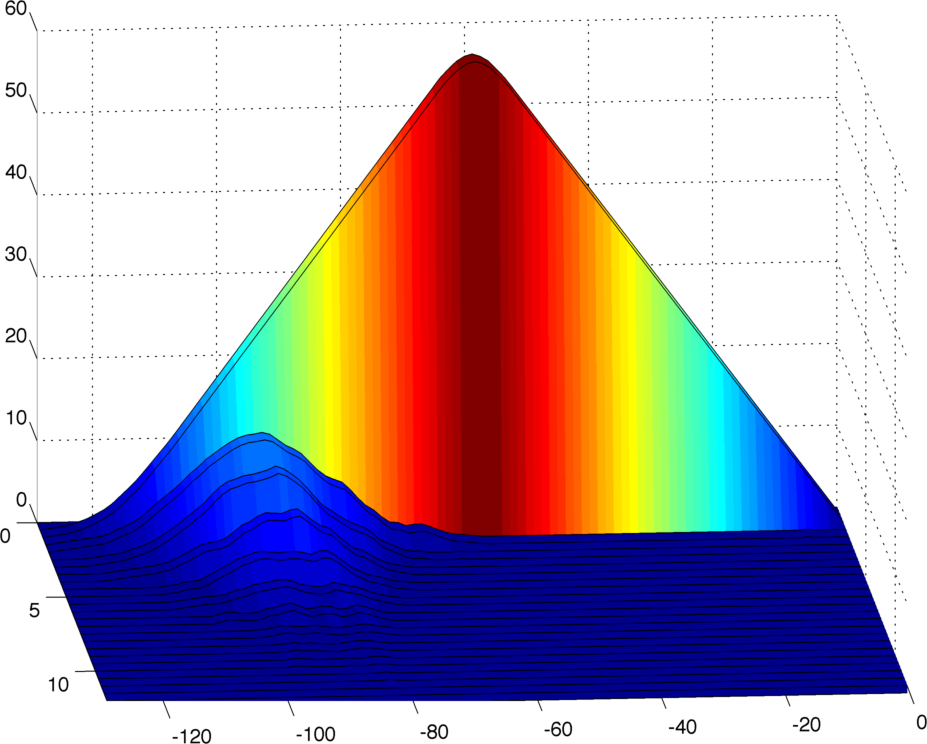}
\includegraphics[width=4cm]{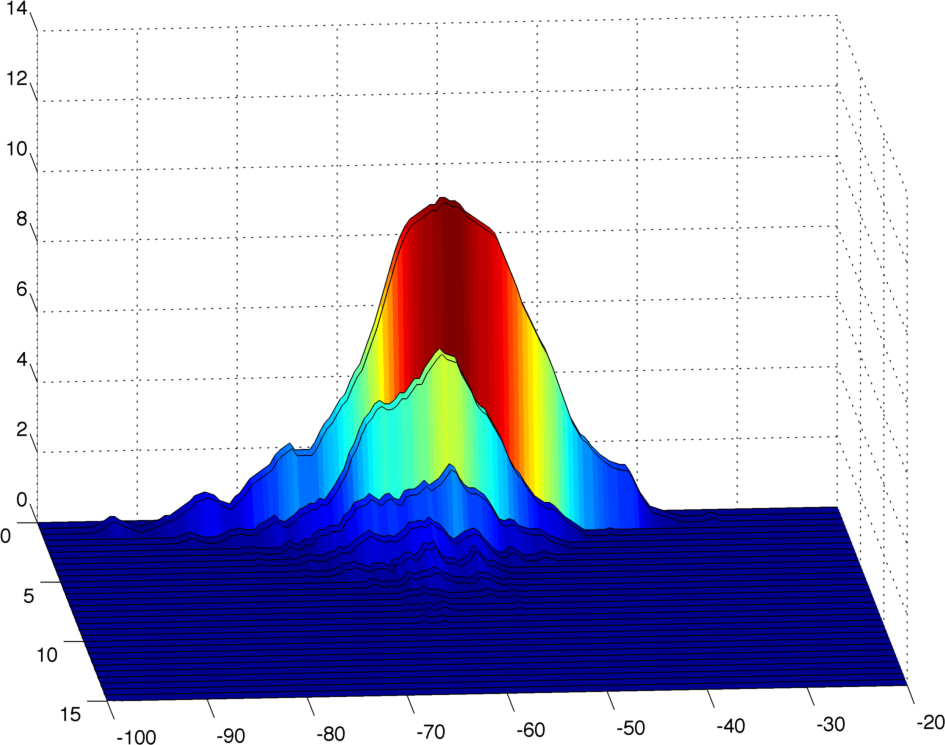}
\includegraphics[width=4cm]{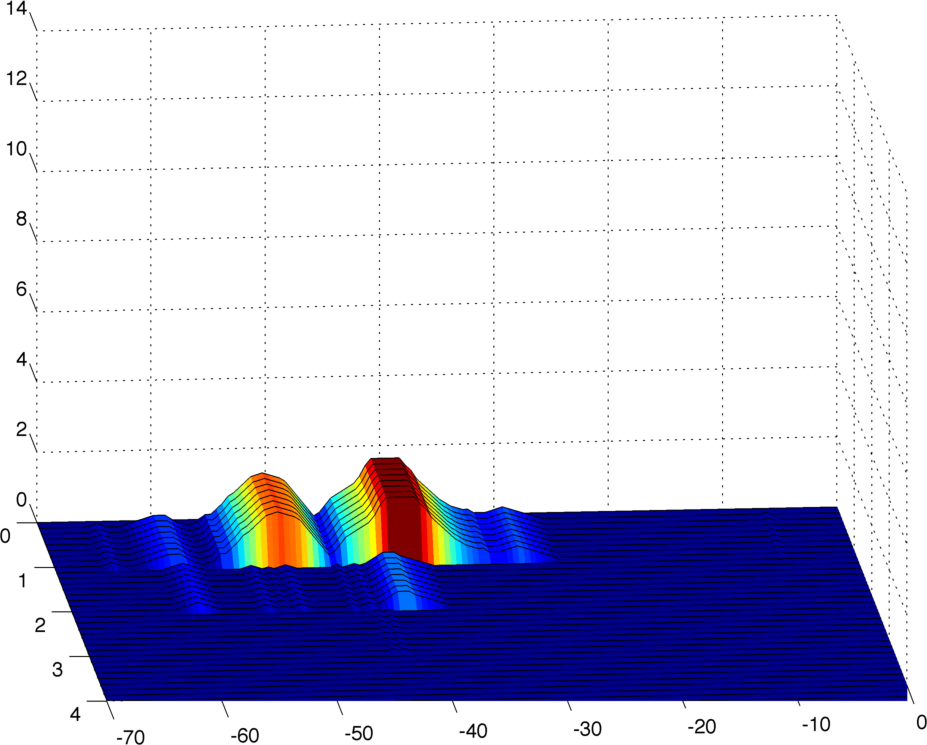}

\includegraphics[width=4cm]{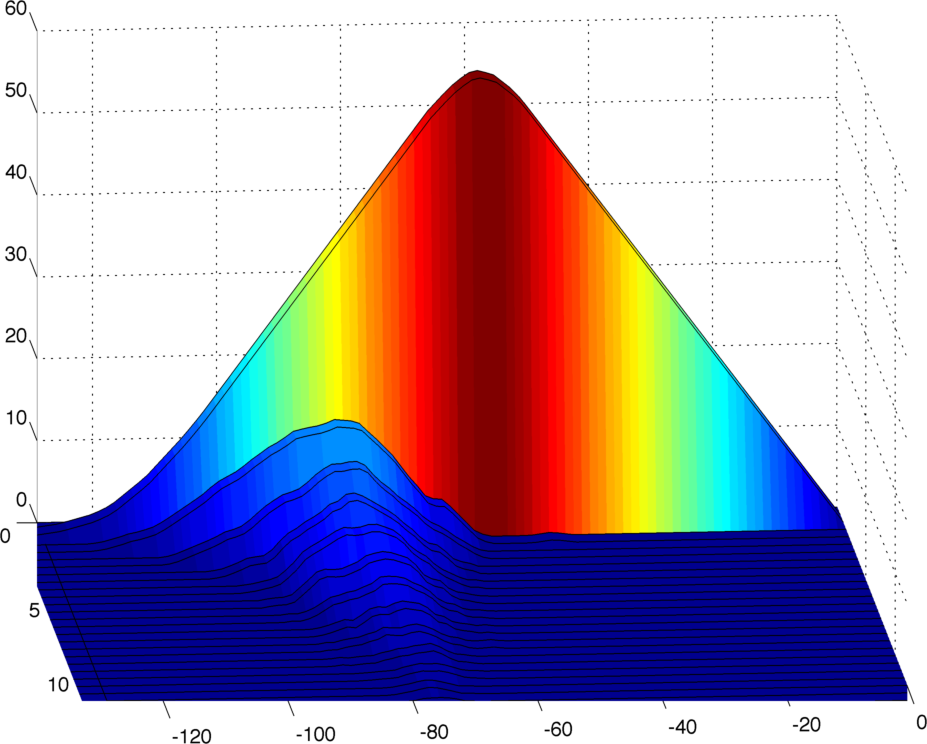}
\includegraphics[width=4cm]{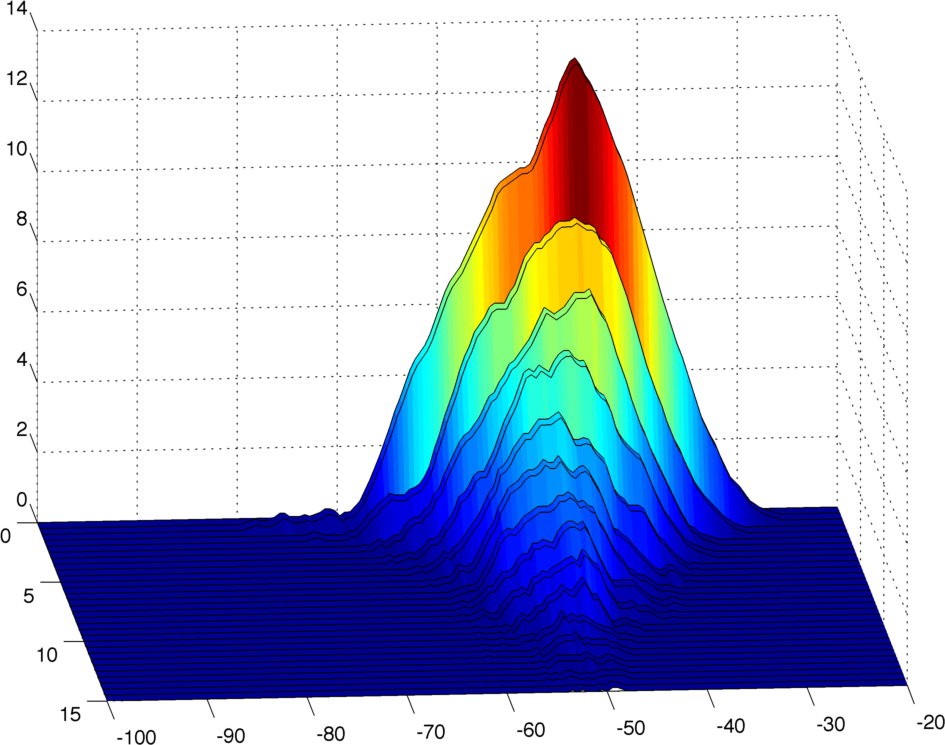}
\includegraphics[width=4cm]{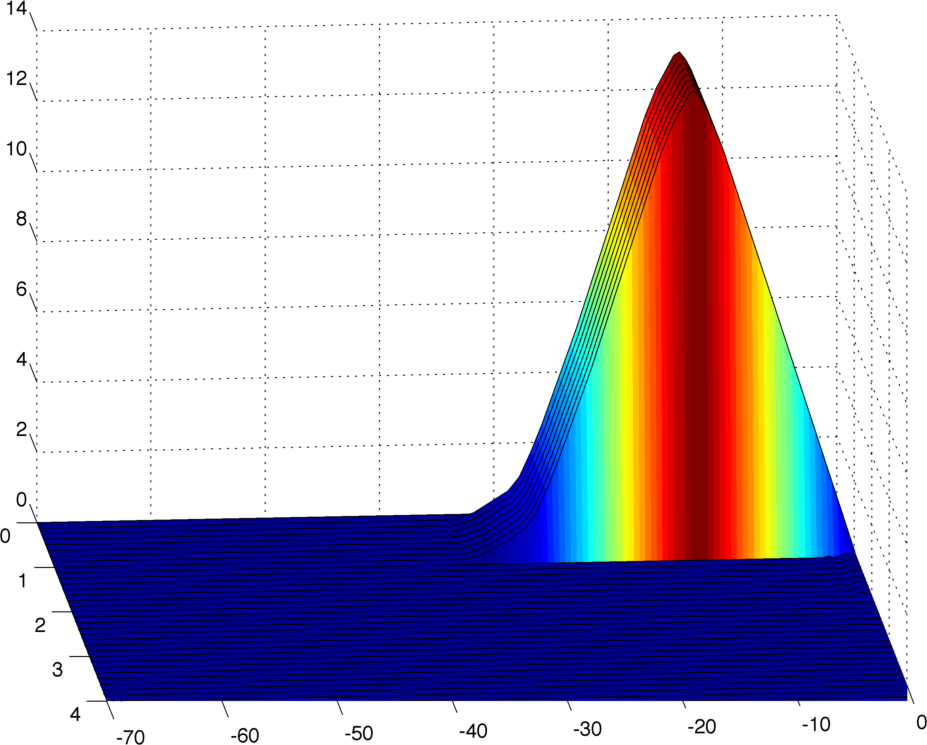}
\end{center}
\caption{We again sample 1000 points sampled from a torus (top left)
  and sphere (top middle),
  this time with Gaussian noise. We show the torus from the
perspective that makes it easiest to see the hole in the middle.
We calculate persistent homology from 10 samples.
In columns 1, 2 and 3, we have the mean persistence landscape in
dimension 0, 1 and 2, respectively, with the torus in row 2 and the
sphere in row 3.
The top right is a graph of the difference between the mean landscapes
in dimension 0.
}
\label{fig:noisy}
\end{figure}
This time we calculate the $L^2$ distances between the mean landscapes.
%
We use the permutation test with 10,000 repetitions to determine if
this distance is statistically significant.
There is a significant difference in dimension 0, with a
p value of 0.0111.
This is surprising, since the mean landscapes look very
similar. However, on closer inspection, they are shifted
slightly (see Figure~\ref{fig:noisy}). 
Note that we are detecting a geometric difference, not a topological
one.
This shows that this statistic is quite powerful.
There is also a significant difference in
dimensions 1 and 2, with p values of 0.0000 and 0.0000, respectively.

\section{Landscape Distance and Stability}
\label{sec:stability}

In this section we define the landscape distance and use it to show that
the persistence landscape is a stable summary statistic. We also show
that the landscape distance gives lower bounds for the bottleneck and
Wasserstein distances. We defer the proofs of 
the results of this section
to the appendix.

Let $M$ and $M'$ be persistence modules as defined in
Section~\ref{sec:pm} and let $\lambda$ and $\lambda'$ be their
corresponding persistence landscapes as defined in Section~\ref{sec:landscape}.
For $1 \leq p \leq \infty$,
define the \emph{$p$-landscape distance} between $M$ and $M'$ by
\begin{equation*}
  \Lambda_p(M,M') = \norm{\lambda-\lambda'}_p.
\end{equation*}
Similarly, if $\lambda$ and $\lambda'$ are the persistence landscapes
corresponding to persistence diagrams $D$ and $D'$ (Section~\ref{sec:barcode}), then we define
\begin{equation*}
  \Lambda_p(D,D') = \norm{\lambda-\lambda}_p.
\end{equation*}

Given a real valued function $f:X \to \R$ on a topological space
$X$, let $M(f)$ denote be the corresponding persistence module
defined at the end of Section~\ref{sec:pm}.
\begin{theorem}[$\infty$-Landscape Stability Theorem] 
\label{thm:inf-stability}
  Let $f,g:X \to \R$. Then
\begin{equation*}
  \Lambda_{\infty}(M(f),M(g)) \leq \norm{f-g}_{\infty}.
\end{equation*}
\end{theorem}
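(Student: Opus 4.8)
The plan is to reduce the $\infty$-landscape stability statement to a pointwise comparison of the two persistence landscapes, using the classical stability of persistent Betti numbers under uniform perturbation of the filtering function. Write $M = M(f)$ and $M' = M(g)$, and set $\epsilon = \norm{f-g}_\infty$. First I would observe the standard interleaving: since $f - \epsilon \leq g \leq f + \epsilon$ pointwise, we have inclusions $f^{-1}((-\infty,a]) \subseteq g^{-1}((-\infty,a+\epsilon])$ and $g^{-1}((-\infty,a]) \subseteq f^{-1}((-\infty,a+\epsilon])$ for every $a$, and these are compatible with the inclusions defining the persistence modules. Applying homology, $M$ and $M'$ are $\epsilon$-interleaved as persistence modules. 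The key consequence I want to extract is the inequality on persistent Betti numbers: for all $b \leq d$,
\begin{equation*}
  \beta_{M}^{b+\epsilon, d-\epsilon} \leq \beta_{M'}^{b,d} \quad\text{and}\quad \beta_{M'}^{b+\epsilon,d-\epsilon} \leq \beta_M^{b,d},
\end{equation*}
whenever $b + \epsilon \leq d - \epsilon$; this follows by factoring the map $M(b+\epsilon \leq d-\epsilon)$ through $M'_b \to M'_d$ via the interleaving maps, exactly as in the proof of Lemma~\ref{lem:beta-decreasing}.

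Next I would translate this into a statement about the landscape functions $\lambda_k$ and $\lambda'_k$. Fix $k \in \N$ and $t \in \R$. By definition $\lambda_k(t) = \sup\{m \geq 0 \st \beta_M^{t-m,t+m} \geq k\}$, and similarly for $\lambda'_k(t)$. Suppose $\lambda'_k(t) > \epsilon$; pick any $m$ with $\epsilon < m \leq \lambda'_k(t)$, so $\beta_{M'}^{t-m,t+m} \geq k$. Writing $b = t-m$, $d = t+m$, the Betti inequality gives $\beta_M^{(t-m)+\epsilon,(t+m)-\epsilon} = \beta_M^{t-(m-\epsilon), t+(m-\epsilon)} \geq k$, hence $\lambda_k(t) \geq m - \epsilon$. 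Taking the supremum over such $m$ yields $\lambda_k(t) \geq \lambda'_k(t) - \epsilon$. The case $\lambda'_k(t) \leq \epsilon$ is handled separately using $\lambda_k(t) \geq 0$ (Lemma~\ref{lem:PLproperties}(1)), which gives $\lambda_k(t) \geq 0 \geq \lambda'_k(t) - \epsilon$. By symmetry (swapping the roles of $M$ and $M'$), $|\lambda_k(t) - \lambda'_k(t)| \leq \epsilon$ for all $k$ and $t$.

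Finally, I would conclude by taking the $\infty$-norm: $\Lambda_\infty(M,M') = \norm{\lambda - \lambda'}_\infty = \sup_{k,t} |\lambda_k(t) - \lambda'_k(t)| \leq \epsilon = \norm{f-g}_\infty$, using the convention for the $\infty$-norm on $\N \times \R$ recorded in Section~\ref{sec:norm} (and noting that, by Lemma~\ref{lem:PLproperties}(\ref{it:decreasing}), the supremum is attained on the $k=1$ slice anyway, though this is not needed). The main obstacle is the first paragraph: making the interleaving argument fully rigorous at the level of persistence modules — in particular verifying that the triangle/square diagrams of inclusions of sublevel sets commute, so that after applying the homology functor one genuinely obtains the factorization of $M(b+\epsilon \leq d-\epsilon)$ through $M'$ that forces the Betti-number inequality. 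Once that diagram chase is in place, the translation to landscapes and the final $\sup$ are routine. (I would expect the paper to carry this out in the appendix, possibly phrased directly in terms of a stability result for persistent Betti numbers rather than re-deriving the interleaving.)
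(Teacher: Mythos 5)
Your overall route is essentially the paper's: you establish that $M(f)$ and $M(g)$ are $\norm{f-g}_{\infty}$-interleaved (the paper cites the Stability Theorem of~\cite{bubenikScott:1} for this step, you re-derive it from the sublevel-set inclusions, which is fine), and then convert the interleaving into the pointwise bound $\abs{\lambda_k(t)-\lambda'_k(t)}\leq\eps$ exactly as in the paper's Theorem~\ref{thm:interleaving}. The translation from Betti numbers to landscapes, the edge case $\lambda'_k(t)\leq\eps$, and the final supremum are all handled correctly. However, your displayed Betti-number inequalities are stated in the wrong direction, and so is the factorization you offer as justification. As written, $\beta_M^{b+\eps,d-\eps}\leq\beta_{M'}^{b,d}$ is false in general: taking $M=M'$ (which is $\eps$-interleaved with itself for every $\eps$) it would contradict Lemma~\ref{lem:beta-decreasing}. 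Moreover, the interleaving maps go $M_a\to M'_{a+\eps}$, so there is no map $M_{b+\eps}\to M'_b$ and the map $M(b+\eps\leq d-\eps)$ does not factor through $M'(b\leq d)$; rather, $M'(b\leq d)$ factors as $M'_b\to M_{b+\eps}\to M_{d-\eps}\to M'_d$ through $M(b+\eps\leq d-\eps)$, which yields the correct inequality $\beta_{M'}^{b,d}\leq\beta_M^{b+\eps,d-\eps}$. That correct inequality is precisely the one your second paragraph actually invokes, so the argument goes through once the statement and its justification are flipped; but as literally written, that step is inconsistent with its own application. (A minor aside: for the difference $\lambda-\lambda'$, Lemma~\ref{lem:PLproperties}(\ref{it:decreasing}) does not imply the supremum is attained on the $k=1$ slice, since a difference of decreasing sequences need not be monotone; as you note, this is not needed.)
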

Thus the persistence landscape is stable with respect to the supremum norm.
We remark that there are no assumptions on $f$ and $g$, not even the
q-tame condition of \citet{csgo:persistenceModules}.

Let $D$ be a persistence diagram.
For $x = (b,d) \in D$, let $\ell = d-b$ denote the \emph{persistence} of $x$.
If $D = \{x_j\}$, let $\Pers_k(D) = \sum_j \ell_j^k$ denote the
\emph{degree-$k$ total persistence} of $D$.

Now let us consider a persistence diagram to be an equivalence
class of multisets of pairs $(b,d)$ with $b \leq d$, 
where $D \sim D \amalg \{(t,t)\}$ for any $t \in \R$. 
That is, to any persistence diagram, we can freely adjoin points on the diagonal. This is reasonable, since points on the diagonal have zero persistence.
Each persistence diagram has a unique representative $\hat{D}$ without
any points on the diagonal.
We set $\abs{D} = \abs{\hat{D}}$.
We also remark that $\Pers_k(D)$ is well defined.

By allowing ourselves to add as many points on the
diagonal as necessary, there exists bijections between any two
persistence diagrams.
%
Any bijection $\varphi:D \isomto D'$ can be represented by $\varphi:x_j \mapsto
x'_j$, where $j \in J$ with $\abs{J} = \abs{D} + \abs{D'}$.
For a given $\varphi$, let $x_j = (b_j,d_j)$, $x'_j =
(b'_j,d'_j)$ and $\eps_j = \norm{x_j - x'_j}_{\infty} = \max(\abs{b_j-b'_j},\abs{d_j-d'_j})$.

The \emph{bottleneck distance} \citep{cseh:stability} between
persistence diagrams $D$ and $D'$ is given by
\begin{equation*}
  W_{\infty}(D,D') = \inf_{\varphi:D \isomto D'} \sup_{j} \eps_j,
\end{equation*}
where the infimum is taken over all bijections from $D$ to $D'$.
It follows that for the empty persistence diagram
$\emptyset$, $W_{\infty}(D,\emptyset) = \frac{1}{2} \sup_{j} \ell_j$.

The $\infty$-landscape distance is bounded by the bottleneck distance.

\begin{theorem} \label{thm:inf-bound}
  For persistence diagrams $D$ and $D'$,
  \[
  \Lambda_{\infty}(D,D') \leq W_{\infty}(D,D').
  \]
\end{theorem}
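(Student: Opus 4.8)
The idea is to compare the persistence landscapes $\lambda$ of $D$ and $\lambda'$ of $D'$ by comparing them against a common ``intermediate'' object built from a near-optimal bijection $\varphi: D \isomto D'$. Fix $\eps > \Lambda_\infty$... wait, fix $\eps > W_\infty(D,D')$; then there is a bijection $\varphi: x_j \mapsto x'_j$ with $\eps_j = \max(\abs{b_j - b'_j}, \abs{d_j - d'_j}) \le \eps$ for all $j$. What I want to show is that $\norm{\lambda - \lambda'}_\infty \le \eps$; since $\eps$ is an arbitrary number exceeding $W_\infty(D,D')$, this gives the claim (and the case $W_\infty = \infty$ is trivial).

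First I would reduce each ``triangle-tip'' function to an explicit formula. A single interval $(b,d)$ in a barcode contributes, to the persistence landscape, the function whose graph is the isosceles triangle with base $[b,d]$; concretely this is $t \mapsto \max(0, \min(t - b, d - t))$, supported on $[b,d]$ with peak $\tfrac{d-b}{2}$ at the midpoint $\tfrac{b+d}{2}$. Writing $\Lambda_{(b,d)}$ for this tent function, the key pointwise lemma is that if $\max(\abs{b-b'}, \abs{d-d'}) \le \eps$ then $\norm{\Lambda_{(b,d)} - \Lambda_{(b',d')}}_\infty \le \eps$; this is a short, elementary computation (the tent function is $1$-Lipschitz in $t$ and also $1$-Lipschitz in each of the parameters $b, d$ jointly, so moving each endpoint by at most $\eps$ moves the graph uniformly by at most $\eps$). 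Degenerate intervals $(t,t)$ give the zero function, which is consistent since $\Lambda_{(t,t)} \equiv 0$, and this is exactly why padding diagrams with diagonal points costs nothing.

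Next I would pass from individual intervals to the full landscape. By the ``stacking isosceles triangles'' description recalled in Section~\ref{sec:barcode}, $\lambda_k(t)$ is the $k$-th largest value among $\{\Lambda_{x_j}(t)\}_{j \in J}$, and likewise $\lambda'_k(t)$ is the $k$-th largest among $\{\Lambda_{x'_j}(t)\}_{j \in J}$, \emph{using the same index set} $J$ supplied by $\varphi$. Here I invoke the elementary order-statistics fact: if $(a_j)_{j \in J}$ and $(a'_j)_{j \in J}$ are two finite tuples with $\abs{a_j - a'_j} \le \eps$ for every $j$, then their $k$-th largest entries differ by at most $\eps$, for every $k$. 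Applying this with $a_j = \Lambda_{x_j}(t)$ and $a'_j = \Lambda_{x'_j}(t)$ — valid by the previous paragraph — gives $\abs{\lambda_k(t) - \lambda'_k(t)} \le \eps$ for all $k$ and all $t$, hence $\norm{\lambda - \lambda'}_\infty = \sup_{k,t} \abs{\lambda_k(t) - \lambda'_k(t)} \le \eps$, as desired.

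The main obstacle is really a bookkeeping one: one must be careful that the landscape genuinely is the sequence of pointwise order statistics of the tent functions of the barcode intervals, including the correct handling of infinite-length intervals (where $\Lambda_{(b,\infty)}(t) = \max(0, t-b)$ and the supremum in Definition~\ref{def:landscape} can be $+\infty$) and of the case where $D$ and $D'$ have different numbers of off-diagonal points, so that some $x_j$ or $x'_j$ is forced to be a diagonal point contributing the zero function. Once the ``landscape $=$ order statistics of tents'' identification is pinned down and the two elementary lemmas (tent perturbation, order-statistic stability) are recorded, the theorem follows by taking the infimum over $\eps > W_\infty(D,D')$.
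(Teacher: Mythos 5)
Your proof is correct, but it takes a genuinely different route from the paper. The paper's proof is a two-line reduction through the interleaving distance: it realizes $D$ and $D'$ as interval-sum modules $M(D)=\oplus_{(a,b)\in\bar D}\mathbb{I}(a,b)$, invokes Theorem~\ref{thm:interleaving} ($\Lambda_\infty(M,M')\le d_I(M,M')$), and then cites Theorem 4.9 of~\cite{csgo:persistenceModules} for $d_I(M(D),M(D'))\le W_\infty(D,D')$. You instead argue directly from a near-optimal bijection: each matched pair of tent functions satisfies $\norm{\Lambda_{x_j}-\Lambda_{x'_j}}_\infty\le\eps_j$ (correct, since $(b,d)\mapsto\max(0,\min(t-b,d-t))$ is $1$-Lipschitz for the $\ell^\infty$ norm on $(b,d)$, with diagonal points giving the zero tent), and then the $k$-th-largest-value stability fact transfers the pointwise $\eps$-bound to $\abs{\lambda_k(t)-\lambda'_k(t)}\le\eps$ for all $k,t$. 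Your argument is in fact the sup-norm analogue of the paper's own proof of Theorem~\ref{thm:stability}: your order-statistics step plays the role of Lemma~\ref{lem:order}, and your tent-perturbation step the role of Lemma~\ref{lem:norm-f}, both resting on the same ``landscape equals pointwise order statistics of tents'' identification the paper uses there. What the paper's route buys is brevity and a stronger intermediate statement (the bound by $d_I$, which also yields Theorem~\ref{thm:inf-stability}); what yours buys is a self-contained elementary proof that needs neither the interleaving machinery nor the cited isometry-type theorem, and that visibly generalizes to the $L^p$ bounds. The only point to pin down carefully, as you note, is the order-statistics identification itself, including the padding by diagonal points so that both tuples are indexed by the same set $J$ and the handling of infinite intervals (which force $\eps=\infty$ unless they are matched with each other, in which case the half-infinite ramps satisfy the same perturbation bound).
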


For $p \geq 1$, the \emph{$p$-Wasserstein
  distance} \citep{csehm:lipschitz} between $D$ and $D'$ is given by
\begin{equation*}
  W_p(D,D') = \inf_{\varphi:D \isomto D'} \left[ \sum_j
    \eps_j^p \right]^{\frac{1}{p}}.
\end{equation*}

We remark that
the Wasserstein distance gives equal weighting to the $\eps_j$ while
the landscape distance gives a stronger weighting to $\eps_j$ if $x_j$
has larger persistence.
The landscape distance is most closely related to a weighted version
of the Wasserstein distance that we now define.
The \emph{persistence weighted $p$-Wasserstein distance} between $D$
and $D'$ is given by
\begin{equation*}
  \overline{W}_p(D,D') = \inf_{\varphi:D \isomto D'} \left[ \sum_j \ell_j
    \eps_j^p \right]^{\frac{1}{p}}.
\end{equation*}
Note that it is asymmetric.

For the remainder of the section we assume that $D$ and $D'$ are
finite.  The following result bounds the $p$-landscape
distance. Recall that $\ell_j$ is the persistence of $x_j \in D$ and
when $\varphi: x_j \mapsto x'_j$, $\eps_j = \norm{x_j-x'_j}_{\infty}$


\begin{theorem} \label{thm:stability}
  If $n = \abs{D} + \abs{D}$ then
  \begin{equation*}
    \Lambda_p(D,D')^p \leq \min_{\varphi:D \isomto D'} \left[
    \sum_{j=1}^n \ell_j \eps_j^p + \frac{2}{p+1} \sum_{j=1}^n
    \eps_j^{p+1} \right].
\end{equation*}
\end{theorem}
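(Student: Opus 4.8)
The plan is to reduce the statement about arbitrary persistence diagrams $D$ and $D'$ to a pointwise estimate on individual landscape functions, and then to handle one interval at a time along a fixed bijection $\varphi$. First I would fix an arbitrary bijection $\varphi: D \isomto D'$ realized as $x_j \mapsto x_j'$ for $j \in J$ with $\abs{J} = n = \abs{\bar D} + \abs{\bar D'}$, and recall the "stacking isosceles triangles" description of the persistence landscape from Section~\ref{sec:barcode}: if $D = \{(b_j,d_j)\}$ then, writing $m_j = (b_j+d_j)/2$ and $h_j = (d_j - b_j)/2 = \ell_j/2$, each interval contributes a "tent function" $\Lambda_j(t) = \max(0, h_j - \abs{t - m_j})$ supported on $[b_j,d_j]$, and $\lambda_k(t)$ is the $k$-th largest value among $\{\Lambda_j(t)\}_j$ at each $t$. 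The key combinatorial fact I would invoke (and which should be stated or cited) is that for real numbers, the $\ell^p$ distance between the sorted-decreasing rearrangements of two finite sequences is bounded by the $\ell^p$ distance between the sequences in their original pairing; applied pointwise in $t$ this gives
\begin{equation*}
  \norm{\lambda - \lambda'}_p^p = \int_{\R} \sum_{k \geq 1} \abs{\lambda_k(t) - \lambda_k'(t)}^p \, dt \leq \int_{\R} \sum_{j \in J} \abs{\Lambda_j(t) - \Lambda_j'(t)}^p \, dt = \sum_{j \in J} \norm{\Lambda_j - \Lambda_j'}_p^p.
\end{equation*}

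The heart of the argument is then the single-interval estimate
\begin{equation*}
  \norm{\Lambda_j - \Lambda_j'}_p^p \leq \ell_j \eps_j^p + \frac{2}{p+1}\eps_j^{p+1},
\end{equation*}
where $\Lambda_j$ is the tent of persistence $\ell_j$ centered at $m_j$, $\Lambda_j'$ is the tent of the perturbed interval, and $\eps_j = \norm{x_j - x_j'}_\infty = \max(\abs{b_j - b_j'}, \abs{d_j - d_j'})$. Both tents are $1$-Lipschitz (this is essentially Lemma~\ref{lem:PLproperties}(\ref{it:lipschitz}) for the single-interval module), so $\abs{\Lambda_j(t) - \Lambda_j'(t)} \leq \eps_j$ everywhere; moreover the difference is supported on the union of the two intervals, whose symmetric-difference has length at most $2\eps_j$ and whose overlap has length at most $\ell_j$ (in fact at most $\ell_j$; the perturbed interval has length $\ell_j' \leq \ell_j + 2\eps_j$). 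On the overlapping part one again uses $\abs{\Lambda_j - \Lambda_j'} \leq \eps_j$ to get a contribution at most $\ell_j \eps_j^p$; on the symmetric-difference part, where only one tent is nonzero and it decays linearly to $0$ with slope $1$ over a horizontal extent at most $\eps_j$, one integrates $\int_0^{\eps_j} s^p \, ds = \eps_j^{p+1}/(p+1)$ on each of the (at most two) triangular flaps, contributing at most $\frac{2}{p+1}\eps_j^{p+1}$. Summing over $j$ and then taking the infimum over $\varphi$ yields the theorem.

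The main obstacle I expect is making the geometry of the overlap-versus-symmetric-difference decomposition genuinely tight rather than merely sloppy: when $b_j, d_j$ and $b_j', d_j'$ move in opposite directions the perturbed interval can be longer than $\ell_j$, so one must be careful that the region where \emph{both} tents are nonzero still has length controlled by $\ell_j$ (not $\ell_j'$) — this is what makes the asymmetry of $\overline{W}_p$ natural here — and that the remaining region, where the integrand is dominated by a linear ramp, really is covered by at most two intervals of length $\leq \eps_j$ each. A clean way to organize this is to bound $\abs{\Lambda_j(t) - \Lambda_j'(t)}$ by $\min(\eps_j, \operatorname{dist}(t, \partial I_j))$ where $I_j = [b_j,d_j]$ is the \emph{unperturbed} support (using that outside $I_j$ the perturbed tent is itself within distance $\eps_j$ of $0$ and has slope $1$), and then split $\int = \int_{I_j} + \int_{\R \setminus I_j}$: the first piece gives $\leq \abs{I_j}\eps_j^p = \ell_j \eps_j^p$ and the second, using that $\operatorname{dist}(t,\partial I_j)$ exceeds $\eps_j$ outside an $\eps_j$-collar, gives $\leq 2 \int_0^{\eps_j} s^p\,ds$. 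Everything else is the routine rearrangement inequality and the passage from a fixed $\varphi$ to the minimum.
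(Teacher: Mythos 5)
Your proposal follows essentially the same route as the paper: fix a bijection, observe that $\lambda_k(t)$ and $\lambda'_k(t)$ are pointwise the $k$-th largest values of the single-interval tent functions, apply the rearrangement inequality for sorted sequences (the paper's Lemma~\ref{lem:order}) to reduce to matched pairs of tents, and finish with the single-tent estimate $\norm{\Lambda_j-\Lambda'_j}_p^p \leq \ell_j\eps_j^p + \tfrac{2}{p+1}\eps_j^{p+1}$ (the paper's Lemma~\ref{lem:norm-f}). The only divergence is in that last estimate, where your overlap/symmetric-difference decomposition yields a direct upper bound, whereas the paper asserts that the worst case occurs at $x'=(b-\eps,d+\eps)$ and computes the integral there; your version is, if anything, the more carefully justified of the two.
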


From this we can obtain a lower bound on the $p$-Wasserstein distance.


\begin{corollary} \label{cor:Wp}
  $W_p(D,D')^p \geq
  \min \left(1,\frac{1}{2}\left[W_{\infty}(D,\emptyset)+\frac{1}{p+1}\right]^{-1}
  \Lambda_p(D,D')^p\right)$.
\end{corollary}


For our final stability theorem, we use ideas from
\citet{csehm:lipschitz}.  
Let $f:X \to \R$ be a function on a topological space.
We say that $f$ is \emph{tame} if for all but finitely many $a \in \R$,
the associated persistence module $M(f)$ is constant and finite
dimensional on some open interval containing $a$.
For such an $f$, let $D(f)$ denote the corresponding persistence
diagram.  
If $X$ is a metric space we say that $f$ is \emph{Lipschitz} if there is some
constant $c$ such that $\abs{f(x)-f(y)} \leq c\,  d(x,y)$ for all $x,y\in
X$.
We let $\Lip(f)$ denote the infimum of all such $c$.
We say that a metric space $X$ \emph{implies bounded degree-$k$ total
  persistence} if there is a constant $C_{X,k}$ such that $\Pers_k(D(f))
\leq C_{X,k}$ for all tame Lipschitz functions $f:X \to \R$ such that
$\Lip(f) \leq 1$.
For example, as observed by \citet{csehm:lipschitz}, if $X$ is the $n$-dimensional sphere, then $X=S^n$ has bounded $k$-persistence for $k=n+\delta$ for any $\delta>0$, but does not have bounded $k$-persistence for $k<n$.

\begin{theorem}[$p$-Landscape stability theorem] \label{thm:landscape-stability}
  Let $X$ be a triangulable, compact metric space that implies bounded
  degree-$k$ total persistence for some real number $k \geq 1$, and
  let $f$ and $g$ be two tame Lipschitz functions. Then
  \begin{equation*}
    \Lambda_p(D(f),D(g))^p \leq C \norm{f-g}_{\infty}^{p-k},
  \end{equation*}
  for all $p \geq k$, where 
  $C = C_{X,k} \norm{f}_{\infty} (\Lip(f)^k+\Lip(g)^k) + C_{X,k+1}
  \frac{1}{p+1} (\Lip(f)^{k+1}+\Lip(g)^{k+1})$.
\end{theorem}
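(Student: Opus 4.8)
The plan is to deduce Theorem~\ref{thm:landscape-stability} from Theorem~\ref{thm:stability} by bounding each of the two sums $\sum_j \ell_j\eps_j^p$ and $\frac{2}{p+1}\sum_j \eps_j^{p+1}$ along a suitably chosen bijection, exactly mirroring the argument in~\cite{csehm:lipschitz} that proves the Wasserstein Stability Theorem. First I would recall the hypothesis: since $X$ implies bounded degree-$k$ total persistence, there is a constant $C_X$ such that for any tame Lipschitz $h:X\to\R$ one has $\sum_j \ell_j^k \leq C_X \max\{\Lip(h)^k, 1\}$ (the degree-$k$ total persistence bound), and moreover the standard stability estimates give $\eps_j \leq \norm{f-g}_\infty$ for every $j$ along the bijection $\varphi$ realizing the bottleneck distance between $D(f)$ and $D(g)$. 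I would fix that $\varphi$ and use it to bound the right-hand side of Theorem~\ref{thm:stability}.

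The key steps, in order: (1) For the second sum, write $\eps_j^{p+1} = \eps_j^{p-k}\cdot \eps_j^{k+1}$ and bound $\eps_j^{p-k} \leq \norm{f-g}_\infty^{p-k}$ (valid since $p \geq k$ and $\eps_j \leq \norm{f-g}_\infty$), leaving $\sum_j \eps_j^{k+1}$, which is itself controlled by the degree-$(k+1)$ total persistence — or more directly, bounded in~\cite{csehm:lipschitz} by $C_X' \max\{\Lip(f)^{k+1},\Lip(g)^{k+1}\}$ via the triangle-inequality trick comparing $D(f)$ and $D(g)$ each to the empty diagram. (2) For the first sum, similarly split $\ell_j\eps_j^p = \ell_j\, \eps_j^{p-k}\, \eps_j^{k}$; bound $\eps_j^{p-k} \leq \norm{f-g}_\infty^{p-k}$, then estimate $\ell_j \eps_j^k$. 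Here I would use that $\ell_j$ is the persistence of a point of $D(f)$ (or is itself bounded by $\eps_j$ when $x_j$ is a diagonal point matched to an off-diagonal point of $D'$) together with $\eps_j \leq 2 W_\infty(D,\emptyset)$-type bounds, reducing $\sum_j \ell_j \eps_j^k$ to a multiple of the degree-$k$ total persistence $\sum_j \ell_j^k$ times a factor involving $W_\infty(D,\emptyset)$. (3) Collect the constants: the $\norm{f-g}_\infty^{p-k}$ factors out, the Lipschitz powers $\Lip(f)^k,\Lip(g)^k,\Lip(f)^{k+1},\Lip(g)^{k+1}$ appear from the two total-persistence bounds, the $C_X$ absorbs the space-dependent constant, and the $(W_\infty(D,\emptyset) + \frac{1}{p+1})$ factor collects the weighting from the first sum and the $\frac{2}{p+1}$ coefficient of the second sum. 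This yields the stated $C = C_X \max\{\Lip(f)^k,\Lip(g)^k,\Lip(f)^{k+1},\Lip(g)^{k+1}\}(W_\infty(D,\emptyset)+\frac{1}{p+1})$.

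The main obstacle I expect is the bookkeeping around diagonal points in the bijection $\varphi$: when a point $x_j \in D(f)$ is matched to a diagonal point $x_j' \in D(g)$ (or vice versa), the persistence $\ell_j$ and the matching cost $\eps_j$ are linked — in fact $\ell_j \leq 2\eps_j$ in that case — and one must check that the degree-$k$ total persistence bound still dominates $\sum_j \ell_j \eps_j^k$ and that $\sum_j \eps_j^{k+1}$ is genuinely controlled (this is precisely where the "implies bounded degree-$k$ total persistence" hypothesis, applied to both $f$ and $g$ and compared to the empty diagram, does its work, as in the proof of the Wasserstein Stability Theorem in~\cite{csehm:lipschitz}). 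Making the constants track correctly through these cases, rather than the core inequality, is the delicate part. Everything else is a direct substitution into Theorem~\ref{thm:stability} combined with the cited total-persistence estimates.
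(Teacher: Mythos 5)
Your proposal follows exactly the route the paper intends: the paper's entire ``proof'' of Theorem~\ref{thm:landscape-stability} is the one-line remark that it follows from Theorem~\ref{thm:stability} by ``following the proof of the Wasserstein Stability Theorem'' of~\cite{csehm:lipschitz}, i.e., substituting the bottleneck-optimal matching into the bound of Theorem~\ref{thm:stability}, factoring $\norm{f-g}_{\infty}^{p-k}$ out of each of the two sums, and absorbing the degree-$k$ and degree-$(k+1)$ total persistence estimates together with the $W_{\infty}(D,\emptyset)$ and $\frac{2}{p+1}$ factors into the constant $C$. Your unpacking of that argument, including the bookkeeping for points matched to the diagonal and the provenance of each factor in $C$, is correct and in fact more detailed than what the paper records.
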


Thus the persistence diagram is stable with respect to the
$p$-landscape distance if $p>k$, where $X$ has bounded degree-$k$
total persistence.  This is the same condition as for the stability of
the $p$-Wasserstein distance in \citet{csehm:lipschitz}.
Equivalently, the persistence landscape is stable with respect to the
$p$-norm if $p>k$, where $X$ has bounded degree-$k$
total persistence.





\subsection*{Acknowledgments}

{The author would like to thank Robert Adler, Frederic Chazal,
  Herbert Edelsbrunner, Giseon Heo, Sayan Mukherjee and Stephen Rush
  for helpful discussions. Thanks to Junyong Park for suggesting
  Hotelling's $T^2$ test. Also thanks to the anonymous referees who
  made a number of helpful comments to improve the exposition. In addition,  the
  author gratefully acknowledges the support of the Air Force
  Office of Scientific Research (AFOSR grant FA9550-13-1-0115). }



\appendix

\section{Proofs}
\label{sec:proofs}

\begin{proof}[Proof of Lemma~\ref{lem:PLproperties}(\ref{it:lipschitz})]
We will prove that $\lambda_k$ is 1-Lipschitz.
That is, $\abs{\lambda_k(t) - \lambda_k(s)} \leq \abs{t-s}$, for all $s,t \in \R$.

Let $s,t \in \R$. Without loss of generality, assume that
$\lambda_k(t) \geq \lambda_k(s) \geq 0$.
If $\lambda_k(t) \leq \abs{t-s}$, then $\lambda_k(t) - \lambda_k(s) \leq \lambda_k(t) \leq \abs{t-s}$ and we are done.
So assume that $\lambda_k(t) > \abs{t-s}$.
 

Let $0 < h < \lambda_k(t)-\abs{t-s}$.
Then $t-\lambda_k(t) < s-h < s+h < t+\lambda_k(t)$.
Thus, by Lemma~\ref{lem:rank} and Definition~\ref{def:landscape}, 
$\beta^{s-h,s+h} \geq k$.
It follows that $\lambda_k(s) \geq \lambda_k(t) - \abs{t-s}$.
Thus $\lambda_k(t) - \lambda_k(s) \leq \abs{t-s}$.
\end{proof}

Theorems \ref{thm:inf-stability} and \ref{thm:inf-bound} follow from
the next result which is of independent interest. 
Following \citet{ccsggo:interleaving}, we say that two persistence
modules $M$ and $M'$ are \emph{$\eps$-interleaved} if for all $a \in
\R$ there exist linear maps $\varphi_a: M_a \to M'_{a+\eps}$ and $\psi:M'_a
\to M_{a + \eps}$ such that for all $a \in \R$, $\psi_{a+\eps}\circ
\varphi_a = M(a \leq a+2\eps)$ and $\varphi_{a+\eps} \circ \psi_a = M'(a
\leq a+2\eps)$ and for all $a\leq b$ 
$M'(a+\eps \leq b+\eps) \circ \varphi_a = \varphi_b \circ M(a\leq b)$ 
and
$M(a+\eps \leq b+\eps) \circ \psi_a = \psi_b \circ M'(a\leq b)$.
For persistence
modules $M$ and $M'$ define the \emph{interleaving distance} between
$M$ and $M'$ by
\[
d_I(M,M) = \inf( \eps \st M \text{ and } M' \text{ are }
\eps\text{-interleaved}).
\]

\begin{theorem} \label{thm:interleaving}
  $\Lambda_{\infty}(M,M') \leq d_I(M,M')$.
\end{theorem}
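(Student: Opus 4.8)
The plan is to exploit the fact that an $\eps$-interleaving between $M$ and $M'$ produces, for each $t$ and each threshold $k$, a comparison between the persistent Betti numbers of $M$ and $M'$ on nested intervals shifted by $\eps$. Recall that an $\eps$-interleaving consists of maps $\phi_a \colon M_a \to M'_{a+\eps}$ and $\psi_a \colon M'_a \to M_{a+\eps}$ commuting with the structure maps, so that $\psi_{a+\eps}\circ\phi_a = M(a \leq a+2\eps)$ and similarly for the other composite. First I would fix $t \in \R$ and $k \in \N$, and unwind Definition~\ref{def:landscape}: $\lambda_k(t) = \sup\{m \geq 0 \st \beta^{t-m,t+m} \geq k\}$, where $\beta^{a,b} = \dim(\im M(a\leq b))$. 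The goal is to show $\abs{\lambda_k(t) - \lambda'_k(t)} \leq \eps$ for every $t$ and $k$, since taking the sup over $t$ (and noting this holds for all $k$, with the $\infty$-norm governed by $\lambda_1$) gives $\Lambda_\infty(M,M') \leq \eps$; then taking the infimum over all $\eps$ for which an interleaving exists yields the claim.

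The key step is a Betti-number comparison: I claim that for any $a \leq b$,
\begin{equation*}
  \beta_M^{a-\eps,\,b+\eps} \leq \beta_{M'}^{a,b} \leq \beta_M^{a-\eps,\,b+\eps}
\end{equation*}
is too strong, but the correct one-sided bound is $\beta_{M'}^{a+\eps,\,b-\eps} \leq \beta_M^{a,b}$ whenever $a+\eps \leq b-\eps$, and symmetrically with $M$ and $M'$ swapped. This follows because $M'(a+\eps \leq b-\eps)$ factors through $M(a \leq b)$ via the interleaving maps: $M'(a+\eps\le b-\eps)$ composed appropriately equals $\phi_{b-\eps}\circ M(a\le b)\circ \psi_a$ (up to bookkeeping with the shift identities), hence its image has dimension at most that of $\im M(a\le b)$. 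Concretely, this gives: if $\beta_M^{t-m,t+m} \geq k$ then $\beta_{M'}^{t-(m-\eps),\,t+(m-\eps)} \geq k$ for $m \geq \eps$, so $\lambda'_k(t) \geq \lambda_k(t) - \eps$ whenever $\lambda_k(t) \geq \eps$ (and the inequality is trivial otherwise since $\lambda'_k \geq 0$). Swapping roles gives $\lambda_k(t) \geq \lambda'_k(t) - \eps$, and together $\abs{\lambda_k(t)-\lambda'_k(t)}\le\eps$.

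The main obstacle is getting the interleaving factorization exactly right — tracking which composite of $\phi$'s, $\psi$'s, and structure maps equals which shifted structure map, and making sure the shifts land in the right place so that the rank inequality genuinely follows (one wants $\im(M'(a+\eps\le b-\eps))$ to be a quotient of a subspace of $\im(M(a \le b))$, or at least to admit a surjection factoring through it). A secondary subtlety is the boundary case $m < \eps$, where the shifted interval $[t-(m-\eps), t+(m-\eps)]$ is degenerate or "inverted"; here one simply falls back on $\lambda'_k(t) \geq 0 \geq \lambda_k(t) - \eps$ when $\lambda_k(t) \leq \eps$, so no genuine difficulty arises, but it must be handled explicitly. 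Once the factorization is pinned down, the rest is the routine sup-manipulation sketched above, entirely parallel in spirit to the $1$-Lipschitz proof of Lemma~\ref{lem:PLproperties}(\ref{it:lipschitz}) already given in the appendix.
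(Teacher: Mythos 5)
Your overall strategy is exactly the paper's: an $\eps$-interleaving yields a comparison of persistent Betti numbers on $\eps$-shrunken intervals, hence $\abs{\lambda_k(t)-\lambda'_k(t)}\leq\eps$ for every $k$ and $t$, and taking the supremum over $t$ and $k$ and then the infimum over admissible $\eps$ gives the theorem. Your handling of the degenerate case $\lambda_k(t)\leq\eps$ and the symmetry step are also fine.

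However, the one step you yourself identify as the crux is stated backwards, and the factorization offered to justify it is not the one an interleaving provides. You assert $\beta_{M'}^{a+\eps,\,b-\eps}\leq\beta_M^{a,b}$ on the grounds that ``$M'(a+\eps\leq b-\eps)$ factors through $M(a\leq b)$.'' Neither claim is correct: the composite $\phi_{b-\eps}\circ M(a\leq b)\circ\psi_a$ does not even type-check, since $\psi_a\colon M'_a\to M_{a+\eps}$ lands in $M_{a+\eps}$ rather than in the domain $M_a$ of $M(a\leq b)$; and the inequality itself fails in general --- take $M=0$ and $M'$ a single interval module of length less than $2\eps$, which are $\eps$-interleaved although $\beta_{M'}$ is positive on short intervals while $\beta_M\equiv 0$. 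What the interleaving actually gives, for $b-a\geq 2\eps$, is the factorization of the \emph{larger}-interval map of $M$ through the \emph{smaller}-interval map of $M'$,
\begin{equation*}
  M(a\leq b)\;=\;\psi_{b-\eps}\circ M'(a+\eps\leq b-\eps)\circ\phi_a,
\end{equation*}
whence $\beta_M^{a,b}\leq\beta_{M'}^{a+\eps,\,b-\eps}$. This is the inequality your very next sentence actually uses (``if $\beta_M^{t-m,t+m}\geq k$ then $\beta_{M'}^{t-m+\eps,\,t+m-\eps}\geq k$''), so the downstream argument is sound once the lemma is restated and proved in this direction; as written, though, your conclusion does not follow from the inequality you claim. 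With that single correction your proof coincides with the one in the appendix.
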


\begin{proof}
  Assume that $M$ and $M'$ are $\eps$-interleaved.  Then for $t \in
  \R$ and $m \geq \eps$, the map $M(t-m \leq t+m)$ factors through the
  map $M'(t-m+\eps \leq t+m-\eps)$.  So by Lemma~\ref{lem:rank},
  $\beta^{t-m+\eps,t+m-\eps}(M') \geq \beta^{t-m,t+m}(M)$. Thus by
  Definition~\ref{def:landscape}, $\lambda'(k,t) \geq \lambda(k,t) - \eps$
  for all $k\geq 1$.  It follows that
  $\norm{\lambda-\lambda'}_{\infty} \leq \eps$.
\end{proof}

\begin{proof}[Proof of Theorem~\ref{thm:inf-stability}]
  Combining Theorem~\ref{thm:interleaving} with the stability theorem
  of \citet{bubenikScott:1}, we have
  $\Lambda_{\infty}(M(f),M(g)) \leq d_I(M(f),M(g)) \leq \norm{f-g}_{\infty}$.
\end{proof}

\begin{proof}[Proof of Theorem~\ref{thm:inf-bound}]
  For a persistence diagram $D$, consider the persistence module given
  by the corresponding sum of interval
  modules \citep{csgo:persistenceModules}, $M(D) = \oplus_{(a,b)\in
    \hat  {D}}
  \mathbb{I}(a,b)$. Combining Theorem~\ref{thm:interleaving}
  with Theorem 4.9 of \citet{csgo:persistenceModules} we have
  $\Lambda_{\infty}(M(D),M(D')) \leq d_I(M(D),M(D')) \leq W_{\infty}(D,D')$.
\end{proof}
  
\begin{proof}[Proof of Theorem~\ref{thm:stability}]
  Let $\varphi: D \isomto D'$ with $\varphi(x_j) = x'_j$.
  Let $\lambda = \lambda(D)$ and $\lambda' = \lambda(D')$. So
  $\Lambda_p(D,D')^p = \norm{\lambda-\lambda'}_p^p$.
  \begin{align*}
    \norm{\lambda-\lambda'}_p^p &= \int
    \abs{\lambda(k,t)-\lambda'(k,t)}^p \\
    &= \sum_{k=1}^n \int \abs{\lambda_k(t)-\lambda'_k(t)}^p \, dt\\
    &= \int \sum_{k=1}^n \abs{\lambda_k(t)-\lambda'_k(t)}^p \, dt
  \end{align*}
Fix $t$. Let $u_j(t) = \lambda(\{x_j\})(1,t)$ and $v_j(t) =
\lambda(\{x'_j\})(1,t)$.
For each $t$, let $u_{(1)}(t) \leq \cdots \leq u_{(n)}(t)$ denote an
ordering of $u_1(t), \ldots, u_n(t)$ and define $v_{(k)}(t)$ for $1
\leq k \leq n$ similarly.
Then $u_{(k)}(t) = \lambda_k(t)$ and $v_{(k)}(t) = \lambda'_k(t)$ (see Figure~\ref{fig:pl}).
We obtain the result from the following where the two inequalities
are proven in Lemmata \ref{lem:order} and \ref{lem:norm-f}.
\begin{align*}
  \norm{\lambda-\lambda'}_p^p &= \int \sum_{k=1}^n \abs{ u_{(k)}(t) -
    v_{(k)}(t) }^p \, dt\\
  &\leq \int \sum_{k=1}^n \abs{u_k(t)-v_k(t)}^p \, dt\\
  &= \sum_{j=1}^n \int \abs{u_j(t)-v_j(t)}^p \, dt\\
  &\leq \sum_{j=1}^n \ell_j \eps_j^p + \frac{2}{p+1} \sum_{j=1}^n
  \eps_j^{p+1}. 
\end{align*}
\end{proof}

\begin{lemma} \label{lem:order}
  Let $u_1,\ldots,u_n \in \R$ and $v_1,\ldots,v_n \in \R$. Order 
  them $u_{(1)} \leq \cdots \leq u_{(n)}$ and $v_{(1)} \leq \cdots
  \leq v_{(n)}$. Then 
  \begin{equation*}
    \sum_{j=1}^n \abs{u_{(j)}-v_{(j)}}^p \leq \sum_{j=1}^n\abs{u_j-v_j}^p.
  \end{equation*}
\end{lemma}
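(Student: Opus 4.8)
The plan is to reduce the claim to a two-element exchange inequality and then to sort the sequences by adjacent transpositions.

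First I would note that the claim is invariant under relabelling the pairs simultaneously: for any permutation $\tau$ of $\{1,\dots,n\}$ we have $\sum_j \abs{u_j-v_j}^p = \sum_j \abs{u_{\tau(j)}-v_{\tau(j)}}^p$. Choosing $\tau$ with $u_{\tau(1)}\le\cdots\le u_{\tau(n)}$, I may therefore assume $u_1\le\cdots\le u_n$, i.e. $u_j = u_{(j)}$ for all $j$. It then remains to show that replacing $(v_1,\dots,v_n)$ by its increasing rearrangement does not increase $\sum_j \abs{u_j-v_j}^p$.

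The key step is the following exchange inequality: if $a\le b$ and $c\le d$ then
\begin{equation*}
  \abs{a-c}^p + \abs{b-d}^p \le \abs{a-d}^p + \abs{b-c}^p .
\end{equation*}
To prove it, observe that both $a-c$ and $b-d$ lie in the interval $[a-d,\,b-c]$ (using $a\le b$ and $c\le d$) and that $(a-c)+(b-d) = (a-d)+(b-c)$; hence there is a single $\mu\in[0,1]$ with $a-c = \mu(a-d)+(1-\mu)(b-c)$ and $b-d = (1-\mu)(a-d)+\mu(b-c)$. Since $t\mapsto\abs{t}^p$ is convex for $p\ge 1$, applying it to these two convex combinations and adding yields the inequality. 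This convexity argument is the only place where $p\ge 1$ is used, and it is the crux of the proof; everything else is bookkeeping.

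Finally I would transform $(v_1,\dots,v_n)$ into increasing order through a finite sequence of adjacent transpositions, each swapping a pair of positions $i,i+1$ with $v_i > v_{i+1}$. For such a swap, positions $i$ and $i+1$ carry the $u$-values $u_{(i)}\le u_{(i+1)}$, so the exchange inequality with $a=u_{(i)}$, $b=u_{(i+1)}$, $c=v_{i+1}$, $d=v_i$ shows the swap does not increase $\sum_j\abs{u_j-v_j}^p$. After all swaps the $v$-sequence equals $v_{(1)}\le\cdots\le v_{(n)}$, giving $\sum_j\abs{u_{(j)}-v_{(j)}}^p \le \sum_j\abs{u_j-v_j}^p$, which (after undoing the initial relabelling) is the assertion of the lemma. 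I do not expect any obstacle beyond verifying the exchange inequality above.
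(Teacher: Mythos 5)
Your proof is correct, but it is organized differently from the paper's. The paper fixes strictly increasing $u$'s and $v$'s, considers $f_n(\sigma)=\sum_j\abs{u_j-v_{\sigma(j)}}^p$ over the symmetric group, and argues by induction on $n$ that any minimizer must be the identity, verifying the $n=2$ base case by checking that $g(x)=x^p-\abs{1-x}^p$ is increasing; it then recovers the general (non-strict) statement by a continuity/limiting argument. You instead sort the $v$'s by adjacent transpositions and control each swap with the exchange inequality $\abs{a-c}^p+\abs{b-d}^p\le\abs{a-d}^p+\abs{b-c}^p$ for $a\le b$, $c\le d$, which you derive from convexity of $t\mapsto\abs{t}^p$ via the observation that $a-c$ and $b-d$ are convex combinations of $a-d$ and $b-c$ with matching weights. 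The two arguments share the same heart --- a two-element crossing comparison --- but yours buys a few things: the convexity/majorization derivation is cleaner and immediately valid for any convex function of the difference (not just $\abs{\cdot}^p$), the bubble-sort bookkeeping avoids the somewhat delicate ``delete a pair and apply the induction hypothesis'' step in the paper's minimality argument, and non-strict inequalities are handled directly without the final appeal to continuity. The only nitpick is the phrase ``a single $\mu$'': in the degenerate case $a-d=b-c$ the weight is not unique, but then all four terms coincide and the inequality is trivially an equality, so nothing is lost.
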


\begin{proof}
  Assume $u_1 < \cdots < u_n$, $v_1 < \cdots < v_n$, and $p \geq 1$.
  Let $u$ and $v$ denote $(u_1,\ldots,u_n)$ and $(v_1,\ldots,v_n)$.
  Let $\Sigma_n$ denote the symmetric group on $n$ letters and let
  $f_n: \Sigma_n \to \R$ be given by $f_n(\sigma) = \sum_{j=1}^n
  \abs{u_j - v_{\sigma(j)}}^p$. We will prove by induction that if
  $f_n(\sigma)$ is minimal then $\sigma$ is the identity, which we
  denote by $1$.

  For $n=1$ this is trivial. For $n=2$ assume without loss of
  generality that $u_1=0$, $u_2=1$ and $0 \leq v_1 < v_2$. 
  Let $1$ and $\tau$ denote the elements of $\Sigma_2$. 
  Then $f(1) = v_1^p + \abs{1-v_2}^p$ and $f(\tau) = v_2^p +
  \abs{1-v_1}^p$. Notice that $f(1) < f(\tau)$ if and only if $v_1^p -
  \abs{1-v_1}^p < v_2^p - \abs{1-v_2}^p$.
  The result follows from checking that $g(x) = x^p -\abs{1-x}^p$ is
  an increasing function for $x \geq 0$.

  Now assume that the statement is true for some $n \geq 2$.
  Assume that $f_{n+1}(\sigma^*)$ is minimal.
  Fix $1 \leq i \leq n+1$.
  Let $u' = (u_1,\ldots, \hat{u}_i,\ldots,u_{n+1})$ and
  $v'=(v_1,\ldots,\hat{v}_{\sigma^*(i)},\ldots,v_{n+1})$, where
  $\hat{\cdot}$ denotes omission.
  Since $f_{n+1}(\sigma^*)$ is minimal for $u$ and $v$,
  it follows that $\sum_{j=1, j\neq i}^n \abs{u_j-v_{\sigma^*(j)}}$ is
  minimal for $u'$ and $v'$.
  By the induction hypothesis, for $1 \leq j < k \leq n+1$ and $j,k\neq
  i$, $\sigma^*(j)<\sigma^*(k)$.
  Therefore $\sigma^*=1$.
  Thus, by induction, the statement is true for all $n$.

  Hence $\sum_{j=1}^n\abs{u_{(j)}-v_{(j)}}^p \leq
  \sum_{j=1}^n\abs{u_j-v_j}^p$ if $u_{(1)}<\cdots <u_{(n)}$ and
  $v_{(1)}<\cdots < v_{(n)}$.
  The statement in the lemma follows by continuity.
\end{proof}

\begin{lemma} \label{lem:norm-f}
  Let $x = (b,d)$ and $x'=(b',d')$ where $b\leq d$ and $b'\leq d'$.
  Let $\ell = {d-b}$ and $\eps = \norm{x-x'}_{\infty}$. Then
  $\norm{\lambda(\{x\})-\lambda(\{x'\})}_p^p \leq \ell \eps^p +
  \frac{2}{p+1} \eps^{p+1}$.
\end{lemma}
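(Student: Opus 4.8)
The plan is to reduce the statement to a single explicit one-variable integral. First I would record the shape of the landscape of one point: for $x=(b,d)$, writing $f(t)=\lambda(\{x\})(1,t)$, the definition (Definition~\ref{def:landscape}) gives $f(t)=\max(0,\min(t-b,\,d-t))$, the ``tent function'', and $\lambda(\{x\})(k,t)=0$ for $k\geq 2$, since $\beta^{t-m,t+m}$ for a single interval is at most $1$ and equals $1$ exactly when $m\leq\min(t-b,d-t)$. Hence, setting $g(t)=\lambda(\{x'\})(1,t)$ for $x'=(b',d')$,
\[
\norm{\lambda(\{x\})-\lambda(\{x'\})}_p^p=\int_{\R}\abs{f(t)-g(t)}^p\,dt .
\]

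The next step, which carries all the content, is a three-way pointwise bound on $h:=f-g$. Put $b_-=\min(b,b')$ and $d_+=\max(d,d')$. Since $f,g\geq 0$ we have $\abs{h(t)}\leq\max(f(t),g(t))$; since $f(t)\leq\max(0,t-b)\leq\max(0,t-b_-)$ and likewise $g(t)\leq\max(0,t-b_-)$ (because $b,b'\geq b_-$), this yields $\abs{h(t)}\leq\max(0,t-b_-)$; symmetrically $\abs{h(t)}\leq\max(0,d_+-t)$; and $\abs{h(t)}\leq\eps$ because $\sup_t\abs{h(t)}=\Lambda_\infty(\{x\},\{x'\})\leq W_\infty(\{x\},\{x'\})\leq\eps$ by Theorem~\ref{thm:inf-bound} (the last inequality being the cost of the bijection $x\mapsto x'$). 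Combining,
\[
\abs{h(t)}\leq\min\bigl(\max(0,t-b_-),\ \max(0,d_+-t),\ \eps\bigr).
\]

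Integrating this bound over $\R$ and substituting $s=t-b_-$, $L:=d_+-b_-$, reduces the claim to
\[
\int_0^L\min(s,\,L-s,\,\eps)^p\,ds\ \leq\ \ell\eps^p+\tfrac{2}{p+1}\eps^{p+1}.
\]
I would finish with two observations: (i) for each fixed $s$ the integrand $\min(s,L-s,\eps)^p$ (extended by $0$ outside $[0,L]$) is nondecreasing in $L$, so the left side is nondecreasing in $L$; and (ii) $L=d_+-b_-\leq(d+\eps)-(b-\eps)=\ell+2\eps$. So it is enough to evaluate the integral at $L=\ell+2\eps$; since $\ell+2\eps\geq 2\eps$, the integrand is $s^p$ on $[0,\eps]$, constantly $\eps^p$ on $[\eps,\ell+\eps]$, and $(L-s)^p$ on $[\ell+\eps,\ell+2\eps]$, giving exactly $\tfrac{\eps^{p+1}}{p+1}+\ell\eps^p+\tfrac{\eps^{p+1}}{p+1}$.

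The integral evaluation and the monotonicity-in-$L$ remark are routine; the one real idea is the three-way pointwise bound, in particular using $\abs{f-g}\leq\max(f,g)$ to control the ``taper'' near the endpoints $b_-,d_+$ while the sup-norm estimate from Theorem~\ref{thm:inf-bound} controls the middle. The only point that would otherwise require care — the case where the supports $[b,d]$ and $[b',d']$ overlap so little that the flat part of the model integral is empty — is sidestepped entirely by the monotonicity argument, since one always just bounds by the value at $L=\ell+2\eps$.
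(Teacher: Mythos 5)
Your proof is correct, and it takes a genuinely different route from the paper's. The paper's argument fixes $x$ and $\eps$, asserts that as $x'$ ranges over the square $\norm{x-x'}_{\infty}=\eps$ the quantity $\norm{\lambda_1-\lambda_1'}_p^p$ is maximized at the corner $x'=(b-\eps,d+\eps)$, and then computes the integral in that extremal configuration (obtaining exactly $\ell\eps^p+\frac{2}{p+1}\eps^{p+1}$); the extremality claim itself is left unjustified. You instead establish the pointwise envelope $\abs{f(t)-g(t)}\leq\min\bigl(\max(0,t-b_-),\,\max(0,d_+-t),\,\eps\bigr)$ — combining $\abs{f-g}\leq\max(f,g)$ for nonnegative $f,g$ with the sup-norm bound $\abs{f-g}\leq\eps$ — and then integrate, using monotonicity in $L=d_+-b_-$ and the bound $L\leq\ell+2\eps$ to reduce to the same extremal integral. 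What your approach buys is rigor: it replaces the paper's unproved ``maximum at the corner'' step with a checkable pointwise inequality, and the monotonicity-in-$L$ observation cleanly handles the degenerate cases (barely overlapping or disjoint supports) that the corner-maximum picture glosses over. What the paper's approach buys is brevity. One small remark: your appeal to Theorem~\ref{thm:inf-bound} for the bound $\sup_t\abs{f-g}\leq\eps$ is legitimate (that theorem is proved independently via the interleaving distance, so there is no circularity), but it is heavier than needed — the bound follows in one line from the $1$-Lipschitzness of $(a_1,a_2)\mapsto\min(a_1,a_2)_+$ in the sup norm applied to the two tent functions.
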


\begin{proof}
  Let $\lambda = \lambda(\{x\})$ and $\lambda' = \lambda(\{x'\})$.
  First $\lambda_k = \lambda'_k = 0$ for $k >1$; so
  $\norm{\lambda-\lambda'}_p = \norm{\lambda_1-\lambda'_1}_p$.
  Second $\lambda_1(t) = (h-\abs{t-m})_+$, where $h = \frac{d-b}{2}$,
  $m=\frac{b+d}{2}$, and $y_+ = \max(y,0)$, and similarly for
  $\lambda'_1$ (see Figure~\ref{fig:pl}).
  
  Fix $x$ and $\eps$.
  As $x'$ moves along the square $\norm{x-x'}_{\infty}=\eps$,
  $\norm{\lambda_1-\lambda'_1}_p^p$ has a maximum if
  $x'=(a-\eps,b+\eps)$.
  In this case $\norm{\lambda_1-\lambda'_1}_p^p = 2\int_0^h\eps^p\,dt+2\int_0^{\eps}t^p\,dt=\ell\eps^p+\frac{2}{p+1}\eps^{p+1}$.
\end{proof}

\begin{proof}[Proof of Corollary~\ref{cor:Wp}]
  Let $\varphi:D \isomto D'$ be a minimizer for $W_p(D,D')$, with
  corresponding $\{\eps_j\}$.
  Assume that $W_p(D,D') \leq 1$.
  Then $W_p(D,D')^p = \sum_{j=1}^n \eps_j^p \leq 1$. So for $1 \leq j
  \leq n$, $\eps_j \leq 1$.
  Combining this with Theorem~\ref{thm:stability}, we have that
  \begin{equation}
    \Lambda_p(D,D')^p \leq \sum_{j=1}^n\left(\ell_j +
      \frac{2}{p+1}\right) \eps_j^p.
  \end{equation}
  Since $W_{\infty}(D,\emptyset) = \max \frac{1}{2}\ell_j$, 
  $\ell_j \leq 2 \: W_{\infty}(D,\emptyset)$. Hence
  \begin{equation}
    \Lambda_p(D,D')^p \leq 2 \left( W_{\infty}(D,\emptyset) +
      \frac{1}{p+1} \right) W_p(D,D')^p.
  \end{equation}

  Therefore $W_p(D,D')^p \geq 1$ or $W_p(D,D')^p \geq
  \frac{1}{2}\left[W_{\infty}(D,\emptyset)+\frac{1}{p+1}\right]^{-1}
  \Lambda_p(D,D')^p$.
  The statement of the corollary follows.
\end{proof}

Theorem~\ref{thm:landscape-stability} follows from the following
corollary to Theorem~\ref{thm:stability} which is of independent
interest.

\begin{corollary} \label{cor:landscape-stability}
  Let $p \geq k \geq 1$. Then 
  \begin{multline*}
  \Lambda_p(D,D')^p \leq W_{\infty}(D,D')^{p-k} \biggl[
    W_{\infty}(D,\emptyset)(\Pers_k(D)+\Pers_k(D')) + \\
    \frac{1}{p+1}(\Pers_{k+1}(D)+\Pers_{k+1}(D')) \biggr]
  \end{multline*}
\end{corollary}

\begin{proof}
  Let $\varphi$ be a minimizer for $W_{\infty}(D,D')$
  with corresponding $\{\eps_j\}$.
  If $\eps_j > \frac{\ell_j}{2} + \frac{\ell'_j}{2}$ then modify
  $\varphi$ to pair $x_j = (b_j,d_j)$ with $\bar{x}_j =
  (\frac{b_j+d_j}{2},\frac{b_j+d_j}{2})$ and similarly for $x'_j$.
  Note that $\norm{x_j-\bar{x}_j}_{\infty} = \frac{\ell_j}{2}$ and
  $\norm{x'_j-\bar{x'}_j}_{\infty} = \frac{\ell'_j}{2}$, so $\varphi$
  is still a minimizer for $W_{\infty}(D,D')$.

  Recall that for all $j$, $\ell_j \leq 2\, W_{\infty}(D,\emptyset)$.
  Since $\varphi$ is a minimizer for $W_{\infty}(D,D')$,
  for all $j$, $\eps_j \leq W_{\infty}(D,D')$.
  So applying our choice of $\varphi$ to Theorem~\ref{thm:stability}
  we have,
  \begin{equation*}
  \Lambda_p(D,D')^p \leq W_{\infty}(D,D')^{p-k} \left[
    2\, W_{\infty}(D,\emptyset) \sum_{j=1}^n \eps_j^k +
    \frac{2}{p+1}\sum_{j=1}^n \eps_j^{k+1} \right].
  \end{equation*}
  Now $\eps_j^q \leq \left( \frac{\ell_j}{2} + \frac{\ell'_j}{2}
  \right)^q \leq \frac{1}{2} \left( (\ell_j)^q + (\ell'_j)^q \right)$
  for $q \geq 1$, where the right hand side follows by the convexity
  of $\alpha(x) = x^q$ for $q \geq 1$.
  Thus $\sum_{j=1}^n \eps_j^q \leq \frac{1}{2} ( \Pers_q(D) +
  \Pers_q(D') )$ for $q \geq 1$. The result follows.
\end{proof}

\begin{proof}[Proof of Theorem~\ref{thm:landscape-stability}]
  Theorem~\ref{thm:landscape-stability} follows from
  Corollary~\ref{cor:landscape-stability} by the following two
  observations.  First, by the stability theorem of
  \citet{cseh:stability}, $W_{\infty}(D(f),D(g)) \leq
  \norm{f-g}_{\infty}$ and $W_{\infty}(D(f),\emptyset) \leq
  \norm{f}_{\infty}$.  Second, if $\Pers_q(D(f)) \leq C_{X,q}$ for all
  tame Lipschitz functions $f:X \to \R$ with $\Lip(f) \leq 1$, then
  for general tame Lipschitz functions, $\Pers_q(D(f)) \leq C_{X,q}
  \Lip(f)^q$.
\end{proof}


\begin{thebibliography}{48}
\providecommand{\natexlab}[1]{#1}
\providecommand{\url}[1]{\texttt{#1}}
\expandafter\ifx\csname urlstyle\endcsname\relax
  \providecommand{\doi}[1]{doi: #1}\else
  \providecommand{\doi}{doi: \begingroup \urlstyle{rm}\Url}\fi

\bibitem[Adler and Taylor(2007)]{adlerTaylor:book}
Robert~J. Adler and Jonathan~E. Taylor.
\newblock \emph{Random Fields and Geometry}.
\newblock Springer Monographs in Mathematics. Springer, New York, 2007.
\newblock ISBN 978-0-387-48112-8.

\bibitem[Adler et~al.(2010)Adler, Bobrowski, Borman, Subag, and
  Weinberger]{abbsw:phrfc}
Robert~J. Adler, Omer Bobrowski, Matthew~S. Borman, Eliran Subag, and Shmuel
  Weinberger.
\newblock Persistent homology for random fields and complexes.
\newblock In \emph{Borrowing Strength: Theory Powering Applications---a
  {F}estschrift for {L}awrence {D}. {B}rown}, volume~6 of \emph{Inst. Math.
  Stat. Collect.}, pages 124--143. Inst. Math. Statist., Beachwood, OH, 2010.

\bibitem[Blumberg et~al.(2014)Blumberg, Gal, Mandell, and
  Pancia]{Blumberg:2012}
Andrew~J. Blumberg, Itamar Gal, Michael~A. Mandell, and Matthew Pancia.
\newblock Robust statistics, hypothesis testing, and confidence intervals for
  persistent homology on metric measure spaces.
\newblock \emph{Found. Comput. Math.}, 14\penalty0 (4):\penalty0 745--789,
  2014.
\newblock ISSN 1615-3375.
\newblock \doi{10.1007/s10208-014-9201-4}.
\newblock URL \url{http://dx.doi.org/10.1007/s10208-014-9201-4}.

\bibitem[Bobrowski and Adler(2011)]{bobrowskiAdler:distanceFunctions}
Omer Bobrowski and Robert~J. Adler.
\newblock Distance functions, critical points, and topology for some random
  complexes.
\newblock arXiv:1107.4775 [math.AT], 2011.

\bibitem[Bobrowski and Borman(2012)]{bobrowskiBorman}
Omer Bobrowski and Matthew~Strom Borman.
\newblock Euler integration of {G}aussian random fields and persistent
  homology.
\newblock \emph{J. Topol. Anal.}, 4\penalty0 (1):\penalty0 49--70, 2012.
\newblock ISSN 1793-5253.

\bibitem[Borsuk(1948)]{borsuk:1948}
Karol Borsuk.
\newblock On the imbedding of systems of compacta in simplicial complexes.
\newblock \emph{Fund. Math.}, 35:\penalty0 217--234, 1948.
\newblock ISSN 0016-2736.

\bibitem[Bubenik and Scott(2014)]{bubenikScott:1}
Peter Bubenik and Jonathan~A. Scott.
\newblock Categorification of persistent homology.
\newblock \emph{Discrete Comput. Geom.}, 51\penalty0 (3):\penalty0 600--627,
  2014.
\newblock ISSN 0179-5376.

\bibitem[Bubenik et~al.(2010)Bubenik, Carlsson, Kim, and
  Luo]{bckl:nonparametric}
Peter Bubenik, Gunnar Carlsson, Peter~T. Kim, and Zhi-Ming Luo.
\newblock Statistical topology via {M}orse theory persistence and nonparametric
  estimation.
\newblock In \emph{Algebraic Methods in Statistics and Probability {II}},
  volume 516 of \emph{Contemp. Math.}, pages 75--92. Amer. Math. Soc.,
  Providence, RI, 2010.

\bibitem[Carlsson(2009)]{carlsson:topologyAndData}
Gunnar Carlsson.
\newblock Topology and data.
\newblock \emph{Bull. Amer. Math. Soc. (N.S.)}, 46\penalty0 (2):\penalty0
  255--308, 2009.
\newblock ISSN 0273-0979.

\bibitem[Carlsson et~al.(2008)Carlsson, Ishkhanov, de~Silva, and
  Zomorodian]{cidsz:mumford}
Gunnar Carlsson, Tigran Ishkhanov, Vin de~Silva, and Afra Zomorodian.
\newblock On the local behavior of spaces of natural images.
\newblock \emph{Int. J. Comput. Vision}, 76\penalty0 (1):\penalty0 1--12, 2008.
\newblock ISSN 0920-5691.

\bibitem[Chazal et~al.(2009)Chazal, Cohen-Steiner, Glisse, Guibas, and
  Oudot]{ccsggo:interleaving}
Fr\'{e}d\'{e}ric Chazal, David Cohen-Steiner, Marc Glisse, Leonidas~J. Guibas,
  and Steve~Y. Oudot.
\newblock Proximity of persistence modules and their diagrams.
\newblock In \emph{Proceedings of the 25th Annual Symposium on Computational
  Geometry}, SCG '09, pages 237--246, New York, NY, USA, 2009. ACM.
\newblock ISBN 978-1-60558-501-7.

\bibitem[Chazal et~al.(2012)Chazal, de~Silva, Glisse, and
  Oudot]{csgo:persistenceModules}
Frederic Chazal, Vin de~Silva, Marc Glisse, and Steve Oudot.
\newblock The structure and stability of persistence modules.
\newblock arXiv:1207.3674 [math.AT], 2012.

\bibitem[Chazal et~al.(2013)Chazal, Glisse, Labru{\`e}re, and
  Michel]{Chazal:2013a}
Fr{\'e}d{\'e}ric Chazal, Marc Glisse, Catherine Labru{\`e}re, and Bertrand
  Michel.
\newblock Optimal rates of convergence for persistence diagrams in topological
  data analysis.
\newblock 2013.
\newblock arXiv:1305.6239 [math.ST].

\bibitem[Chazal et~al.(2014)Chazal, Fasy, Lecci, Rinaldo, and
  Wasserman]{Chazal:2014}
Fr{\'e}d{\'e}ric Chazal, Brittany~Terese Fasy, Fabrizio Lecci, Alessandro
  Rinaldo, and Larry Wasserman.
\newblock Stochastic convergence of persistence landscapes and silhouettes.
\newblock \emph{Symposium on Computational Geometry (SoCG)}, 2014.

\bibitem[Chen and Kerber(2013)]{chen-kerber}
Chao Chen and Michael Kerber.
\newblock An output-sensitive algorithm for persistent homology.
\newblock \emph{Comput. Geom.}, 46\penalty0 (4):\penalty0 435--447, 2013.
\newblock ISSN 0925-7721.

\bibitem[Chung et~al.(2009)Chung, Bubenik, and Kim]{cbk:ipmi2009}
Moo~K. Chung, Peter Bubenik, and Peter~T. Kim.
\newblock Persistence diagrams in cortical surface data.
\newblock In \emph{Information Processing in Medical Imaging (IPMI) 2009},
  volume 5636 of \emph{Lecture Notes in Computer Science}, pages 386--397,
  2009.

\bibitem[Cohen-Steiner et~al.(2007)Cohen-Steiner, Edelsbrunner, and
  Harer]{cseh:stability}
David Cohen-Steiner, Herbert Edelsbrunner, and John Harer.
\newblock Stability of persistence diagrams.
\newblock \emph{Discrete Comput. Geom.}, 37\penalty0 (1):\penalty0 103--120,
  2007.
\newblock ISSN 0179-5376.

\bibitem[Cohen-Steiner et~al.(2009)Cohen-Steiner, Edelsbrunner, and
  Harer]{cseh:extendingP}
David Cohen-Steiner, Herbert Edelsbrunner, and John Harer.
\newblock Extending persistence using {P}oincar\'e and {L}efschetz duality.
\newblock \emph{Found. Comput. Math.}, 9\penalty0 (1):\penalty0 79--103, 2009.
\newblock ISSN 1615-3375.

\bibitem[Cohen-Steiner et~al.(2010)Cohen-Steiner, Edelsbrunner, Harer, and
  Mileyko]{csehm:lipschitz}
David Cohen-Steiner, Herbert Edelsbrunner, John Harer, and Yuriy Mileyko.
\newblock Lipschitz functions have {$L_p$}-stable persistence.
\newblock \emph{Found. Comput. Math.}, 10\penalty0 (2):\penalty0 127--139,
  2010.
\newblock ISSN 1615-3375.

\bibitem[Crawley-Boevey(2012)]{crawley-boevey}
William Crawley-Boevey.
\newblock Decomposition of pointwise finite-dimensional persistence modules.
\newblock arXiv:1210.0819 [math.RT], 2012.

\bibitem[de~Silva and Carlsson(2004)]{deSilvaCarlsson:witness}
Vin de~Silva and Gunnar Carlsson.
\newblock Topological estimation using witness complexes.
\newblock \emph{Eurographics Symposium on Point-Based Graphics}, 2004.

\bibitem[{De Silva} and
  Ghrist(2007{\natexlab{a}})]{deSilvaGhrist:coverageInSNvPH:capitalized}
Vin {De Silva} and Robert Ghrist.
\newblock Coverage in sensor networks via persistent homology.
\newblock \emph{Algebr. Geom. Topol.}, 7:\penalty0 339--358,
  2007{\natexlab{a}}.

\bibitem[{De Silva} and
  Ghrist(2007{\natexlab{b}})]{deSilvaGhrist:homologicalSensorNetworks:capitalized}
Vin {De Silva} and Robert Ghrist.
\newblock Homological sensor networks.
\newblock \emph{Notic. Amer. Math. Soc.}, 54\penalty0 (1):\penalty0 10--17,
  2007{\natexlab{b}}.

\bibitem[Dequ{\'e}ant et~al.(2008)Dequ{\'e}ant, Ahnert, Edelsbrunner, Fink,
  Glynn, Hattem, Kudlicki, Mileyko, Morton, Mushegian, Pachter, Rowicka, Shiu,
  Sturmfels, and Pourqui{\'e}]{segmentation}
Mary-Lee Dequ{\'e}ant, Sebastian Ahnert, Herbert Edelsbrunner, Thomas M.~A.
  Fink, Earl~F. Glynn, Gaye Hattem, Andrzej Kudlicki, Yuriy Mileyko, Jason
  Morton, Arcady~R. Mushegian, Lior Pachter, Maga Rowicka, Anne Shiu, Bernd
  Sturmfels, and Olivier Pourqui{\'e}.
\newblock Comparison of pattern detection methods in microarray time series of
  the segmentation clock.
\newblock \emph{PLoS ONE}, 3\penalty0 (8):\penalty0 e2856, 08 2008.

\bibitem[Dey et~al.(2013)Dey, Fan, and Wang]{Dey:2013}
Tamal~Krishna Dey, Fengtao Fan, and Yusu Wang.
\newblock Graph induced complex on point data.
\newblock In \emph{Proceedings of the Twenty-ninth Annual Symposium on
  Computational Geometry}, SoCG '13, pages 107--116, New York, NY, USA, 2013.
  ACM.
\newblock ISBN 978-1-4503-2031-3.

\bibitem[Diaconis et~al.(2012)Diaconis, Holmes, and Shahshahani]{dhs:sampling}
Persi Diaconis, Susan Holmes, and Mehrdad Shahshahani.
\newblock Sampling from a manifold.
\newblock arXiv:1206.6913 [math.ST], 2012.

\bibitem[Edelsbrunner et~al.(2002)Edelsbrunner, Letscher, and
  Zomorodian]{elz:tPaS}
Herbert Edelsbrunner, David Letscher, and Afra Zomorodian.
\newblock Topological persistence and simplification.
\newblock \emph{Discrete Comput. Geom.}, 28\penalty0 (4):\penalty0 511--533,
  2002.
\newblock ISSN 0179-5376.
\newblock Discrete and computational geometry and graph drawing (Columbia, SC,
  2001).

\bibitem[Edelsbrunner et~al.(2011)Edelsbrunner, Morozov, and Patel]{emp:2011}
Herbert Edelsbrunner, Dmitriy Morozov, and Amit Patel.
\newblock Quantifying transversality by measuring the robustness of
  intersections.
\newblock \emph{Found. Comput. Math.}, 11\penalty0 (3):\penalty0 345--361,
  2011.
\newblock ISSN 1615-3375.

\bibitem[Fasy et~al.(2014)Fasy, Lecci, Rinaldo, Wasserman, Balakrishnan, and
  Singh]{Balakrishnan:2013}
Brittany~Terese Fasy, Fabrizio Lecci, Alessandro Rinaldo, Larry Wasserman,
  Sivaraman Balakrishnan, and Aarti Singh.
\newblock Confidence sets for persistence diagrams.
\newblock \emph{Ann. Statist.}, 42\penalty0 (6):\penalty0 2301--2339, 2014.
\newblock ISSN 0090-5364.
\newblock \doi{10.1214/14-AOS1252}.
\newblock URL \url{http://dx.doi.org/10.1214/14-AOS1252}.

\bibitem[Ghrist(2008)]{ghrist:survey}
Robert Ghrist.
\newblock Barcodes: the persistent topology of data.
\newblock \emph{Bull. Amer. Math. Soc. (N.S.)}, 45\penalty0 (1):\penalty0
  61--75, 2008.
\newblock ISSN 0273-0979.

\bibitem[Hatcher(2002)]{hatcher:book}
Allen Hatcher.
\newblock \emph{Algebraic Topology}.
\newblock Cambridge University Press, Cambridge, 2002.
\newblock ISBN 0-521-79160-X; 0-521-79540-0.

\bibitem[Heo et~al.(2012)Heo, Gamble, and Kim]{hgk:jasa}
Giseon Heo, Jennifer Gamble, and Peter~T. Kim.
\newblock Topological analysis of variance and the maxillary complex.
\newblock \emph{J. Amer. Statist. Assoc.}, 107\penalty0 (498):\penalty0
  477--492, 2012.
\newblock ISSN 0162-1459.

\bibitem[Hoffmann-J{\o}rgensen and Pisier(1976)]{hjp:lln-clt-banach}
J.~Hoffmann-J{\o}rgensen and G.~Pisier.
\newblock The law of large numbers and the central limit theorem in {B}anach
  spaces.
\newblock \emph{Ann. Probability}, 4\penalty0 (4):\penalty0 587--599, 1976.

\bibitem[Kahle(2011)]{kahle:randomGeometricComplexes}
Matthew Kahle.
\newblock Random geometric complexes.
\newblock \emph{Discrete Comput. Geom.}, 45:\penalty0 553--573, 2011.

\bibitem[Kahle and Meckes(2013)]{Kahle-Meckes:2013}
Matthew Kahle and Elizabeth Meckes.
\newblock Limit theorems for {B}etti numbers of random simplicial complexes.
\newblock \emph{Homology Homotopy Appl.}, 15\penalty0 (1):\penalty0 343--374,
  2013.
\newblock ISSN 1532-0073.

\bibitem[Kovacev-Nikolic et~al.(2014)Kovacev-Nikolic, Heo, Nikoli\'c, and
  Bubenik]{giseon:maltose}
Violeta Kovacev-Nikolic, Giseon Heo, Dragan Nikoli\'c, and Peter Bubenik.
\newblock Using cycles in high dimensional data to analyze protein binding.
\newblock 2014.
\newblock arXiv:1412.1394 [stat.ME].

\bibitem[Ledoux and Talagrand(2011)]{ledoux-talagrand:book}
Michel Ledoux and Michel Talagrand.
\newblock \emph{Probability in {B}anach Spaces}.
\newblock Classics in Mathematics. Springer-Verlag, Berlin, 2011.
\newblock ISBN 978-3-642-20211-7.
\newblock Isoperimetry and processes, Reprint of the 1991 edition.

\bibitem[Mileyko et~al.(2011)Mileyko, Mukherjee, and Harer]{mmh:probability}
Yuriy Mileyko, Sayan Mukherjee, and John Harer.
\newblock Probability measures on the space of persistence diagrams.
\newblock \emph{Inverse Problems}, 27\penalty0 (12):\penalty0 124007, 22, 2011.
\newblock ISSN 0266-5611.

\bibitem[Milosavljevi{\'c} et~al.(2011)Milosavljevi{\'c}, Morozov, and
  {\v{S}}kraba]{mms:zigzag}
Nikola Milosavljevi{\'c}, Dmitriy Morozov, and Primo{\v{z}} {\v{S}}kraba.
\newblock Zigzag persistent homology in matrix multiplication time.
\newblock In \emph{Computational Geometry ({SCG}'11)}, pages 216--225. ACM, New
  York, 2011.

\bibitem[Morozov(2012)]{dionysus}
Dimitriy Morozov.
\newblock Dionysus: a {C}++ library with various algorithms for computing
  persistent homology.
\newblock Software available at \url{http://www.mrzv.org/software/dionysus/},
  2012.

\bibitem[Munch et~al.(2013)Munch, Bendich, Turner, Mukherjee, Mattingly, and
  Harer]{munch:probabilistic-f}
Elizabeth Munch, Paul Bendich, Katharine Turner, Sayan Mukherjee, Jonathan
  Mattingly, and John Harer.
\newblock Probabilistic fr{\'e}chet means and statistics on vineyards.
\newblock 2013.
\newblock arXiv:1307.6530 [math.PR].

\bibitem[Nanda(2013)]{perseus}
Vidit Nanda.
\newblock Perseus: the persistent homology software.
\newblock Software available at
  \url{http://www.math.rutgers.edu/~vidit/perseus/index.html}, 2013.

\bibitem[Nicolau et~al.(2011)Nicolau, Levine, and
  Carlsson]{nlc:topologyBreastCancer}
Monica Nicolau, Arnold~J. Levine, and Gunnar Carlsson.
\newblock Topology based data analysis identifies a subgroup of breast cancers
  with a unique mutational profile and excellent survival.
\newblock \emph{Proc. Nat. Acad. Sci.}, 108\penalty0 (17):\penalty0 7265--7270,
  2011.

\bibitem[Robinson and Turner(2013)]{robinsonTurner:2013}
Andrew Robinson and Katharine Turner.
\newblock Hypothesis testing for topological data analysis.
\newblock 2013.
\newblock arXiv:1310.7467 [stat.AP].

\bibitem[Tausz et~al.(2011)Tausz, Vejdemo-Johansson, and Adams]{javaplex}
Andrew Tausz, Mikael Vejdemo-Johansson, and Henry Adams.
\newblock Javaplex: a research software package for persistent (co)homology.
\newblock Software available at \url{http://code.google.com/javaplex}, 2011.

\bibitem[Turner et~al.(2014)Turner, Mileyko, Mukherjee, and
  Harer]{tmmh:frechet-means}
Katharine Turner, Yuriy Mileyko, Sayan Mukherjee, and John Harer.
\newblock Fr\'echet means for distributions of persistence diagrams.
\newblock \emph{Discrete Comput. Geom.}, 52\penalty0 (1):\penalty0 44--70,
  2014.

\bibitem[Wood and Chan(1994)]{wood-chan:simulation}
Andrew T.~A. Wood and Grace Chan.
\newblock Simulation of stationary {G}aussian processes in {$[0,1]^d$}.
\newblock \emph{J. Comput. Graph. Statist.}, 3\penalty0 (4):\penalty0 409--432,
  1994.
\newblock ISSN 1061-8600.

\bibitem[Zomorodian and Carlsson(2005)]{zomorodianCarlsson:computingPH}
Afra Zomorodian and Gunnar Carlsson.
\newblock Computing persistent homology.
\newblock \emph{Discrete Comput. Geom.}, 33\penalty0 (2):\penalty0 249--274,
  2005.
\newblock ISSN 0179-5376.

\end{thebibliography}

\end{document}